
\documentclass[10pt,reqno]{amsart}

\usepackage[T1]{fontenc}
\usepackage[utf8x]{inputenc}
\usepackage[french, english]{babel}
\usepackage{amsmath,amsfonts,amssymb,amsthm,bm,shuffle}
\usepackage[math]{anttor}
\frenchbsetup{StandardLists=true} 

\usepackage{pgf,tikz}
\usetikzlibrary{arrows}
\usepackage{float} 

\usepackage{mathrsfs}
\usepackage{setspace}
\usepackage{amsfonts}
\usepackage{pb-diagram}

\usepackage{tikz}
\usepackage{tikz-cd} 
\usetikzlibrary{arrows, matrix}

\usepackage[top=3.4cm,bottom=3.4cm,left=3.7cm,right=3.7cm]{geometry}

\usepackage{xcolor}
\usepackage[hyperindex=true,frenchlinks=true,colorlinks=true,
citecolor=Col5,linkcolor=Col6,urlcolor=Col5,linktocpage,
pagebackref=true]{hyperref}

\definecolor{ColBlack}{RGB}{0,0,0} 
\definecolor{ColWhite}{RGB}{255,255,255} 
\definecolor{Col1}{RGB}{199, 0, 57} 
\definecolor{Col2}{RGB}{144, 12, 63} 
\definecolor{Col3}{RGB}{88, 24, 69} 
\definecolor{Col4}{RGB}{23, 1, 202} 
\definecolor{Col5}{RGB}{23, 107, 240} 
\definecolor{Col6}{RGB}{32, 103, 143} 

\usepackage{tikz}
\usetikzlibrary{shapes}
\usetikzlibrary{fit}
\usetikzlibrary{decorations.pathmorphing}
\usetikzlibrary{arrows.meta}

\usepackage{mathtools}
\usepackage{dsfont}
\usepackage{wasysym}
\usepackage{stmaryrd}
\usepackage{nameref}
\usepackage{youngtab}
\usepackage{cite}
\usepackage{xspace}
\usepackage{subfig}
\usepackage{multirow}
\usepackage{enumitem}
\usepackage{multicol}
\usepackage{chngcntr}

\tikzstyle{PathNode}=[circle,draw=Col1,fill=Col1!10,thick,inner sep=0pt,minimum size=1.5mm]
\tikzstyle{PathStep}=[color=Col1!70]

\tikzstyle{Centering}=[{baseline={([yshift=-0.5ex]current
    bounding box.center)}}]

\tikzstyle{MarkAA}=[draw=Col1!80,fill=Col1!8]
\tikzstyle{MarkAB}=[draw=Col2!80,fill=Col2!8]
\tikzstyle{MarkAC}=[draw=Col3!80,fill=Col3!8]
\tikzstyle{MarkBA}=[draw=Col4!80,fill=Col4!8]
\tikzstyle{MarkBB}=[draw=Col5!80,fill=Col5!8]
\tikzstyle{MarkBC}=[draw=Col6!80,fill=Col6!8]

\tikzstyle{Node}=[circle,MarkAA,inner sep=1pt,
minimum size=2mm,thick,font=\scriptsize]
\tikzstyle{Edge}=[draw=Col1!80,cap=round,thick,rounded corners=2.5pt]
\tikzstyle{Leaf}=[rectangle,MarkBC,inner sep=0pt,minimum size=1mm,thick]
\tikzstyle{NodeST}=[font=\scriptsize]

\tikzstyle{NodeGraph}=[circle,MarkAB,inner sep=1pt,
minimum size=1.5mm,thick]
\tikzstyle{NodeLabelGraph}=[font=\scriptsize,node distance=4mm]
\tikzstyle{EdgeGraph}=[Col1!70,cap=round,very thick]
\tikzstyle{EdgeLabel}=[midway,inner sep=1pt,fill=ColWhite!0,
font=\tiny]
\tikzstyle{FaceXY}=[fill=Col1,opacity=.1]
\tikzstyle{FaceXZ}=[fill=Col2,opacity=.2]
\tikzstyle{FaceYZ}=[fill=Col3,opacity=.2]

\tikzstyle{Map}=[ColBlack!100,draw,-{>[scale=1.5,length=4,width=5]}]
\tikzstyle{Injection}=[ColBlack!100,draw,
{>[scale=1.5,length=4,width=5]}-{>[scale=1.5,length=4,width=5]}]

\tikzstyle{LineGrid}=[very thin,dashed,draw=ColBlack!25]
\tikzstyle{Grid}=[LineGrid]


\newcommand{\Hide}[1]{\textcolor{Col4}{\tt [hidden]}}
\newcommand{\Par}[1]{\left(#1\right)}
\newcommand{\Bra}[1]{\left\{#1\right\}}
\newcommand{\Han}[1]{\left[#1\right]}

\newcommand{\Def}[1]{\textcolor{Col1}{\em #1}}
\newcommand{\OEIS}[1]{\href{http://oeis.org/#1}{{\bf #1}}}

\DeclareRobustCommand{\gobblefive}[5]{}

\let\SavedCaption=\caption
\renewcommand*{\caption}[2][\shortcaption]{%
    \def\shortcaption{#2}
    \SavedCaption[\; #1]{#2}}

\let\SavedParagraph=\paragraph
\renewcommand{\paragraph}[1]{%
    \SavedParagraph{\it #1}}

\newcommand*\ClearToLeftPage{%
    \clearpage
    \ifodd\value{page}
        \hbox{}
        \vspace*{\fill}
        \thispagestyle{empty}
        \newpage
    \fi
}

\newcommand{\DrawGridSpace}[3]{
    \foreach \x in {0,...,#1} {\foreach \y in {0,...,#2} 					{\foreach \z in {0,...,#3} {
        \draw[LineGrid](0,\y,\z)--(\x,\y,\z);
        \draw[LineGrid](\x,0,\z)--(\x,\y,\z);
        \draw[LineGrid](\x,\y,0)--(\x,\y,\z);}}}}

\linespread{1.15}
\renewcommand{\arraystretch}{1.4}


\numberwithin{equation}{section}

\counterwithin{figure}{section}
\counterwithin{table}{section}

\setcounter{tocdepth}{2}

\makeatletter
\def\l@part{\@tocline{-1}{20pt}{0pc}{5pc}{\Large \bf}}
\def\l@section{\@tocline{1}{3pt}{1pc}{5pc}{}}
\def\l@subsection{\@tocline{2}{2pt}{2pc}{5pc}{}}
\makeatother

\renewcommand{\leq}{\leqslant}
\renewcommand{\geq}{\geqslant}

\newcommand{\SetTriword}{\mathsf{Tr}}
\newcommand{\F}{\mathsf{F}}
\newcommand{\SetDyckPath}{\mathsf{Dy}}
\newcommand{\DexterOrder}{\mathsf{Dex}}

\newcommand{\N}{\mathbb{N}}
\newcommand{\Z}{\mathbb{Z}}

\newcommand{\R}{\mathbb{R}}

\newcommand{\K}{\mathbb{K}}

\newcommand{\DiffIndexes}{\mathrm{D}}
\newcommand{\CubicReal}{\mathfrak{C}}

\newcommand{\DegreePolynomial}{\mathrm{d}}
\newcommand{\In}{\mathrm{in}}
\newcommand{\Out}{\mathrm{out}}

\newcommand{\JoinIrreducible}{\mathbf{J}}
\newcommand{\MeetIrreducible}{\mathbf{M}}
\newcommand{\JLattice}{\mathbb{J}}

\DeclareMathOperator{\JJoin}{\vee}
\DeclareMathOperator{\Meet}{\wedge}

\newcommand{\LatticeL}{\mathcal{L}}
\newcommand{\PosetP}{\mathcal{P}}

\newcommand{\Leq}{\preccurlyeq}
\DeclareMathOperator{\Covered}{\lessdot}


\newcommand{\IntervalTwo}{\mathbf{2}}

\newcommand{\NumberTwo}{\mathfrak{t}}
\newcommand{\Spine}{\mathbb{S}}

\newtheorem{Theorem}{Theorem}[section]
\newtheorem{Proposition}[Theorem]{Proposition}
\newtheorem{Lemma}[Theorem]{Lemma}

\newtheorem{conjecture}[Theorem]{Conjecture}

\title[A geometric and combinatorial exploration of Hochschild lattices]{A geometric and combinatorial exploration\\ of Hochschild lattices}
\keywords{}
\subjclass[2010]{}
\date{\today}
\author{Camille Combe}
\address{\scriptsize 
Institut de Recherche Mathématique Avancée 
UMR 7501, Université de Strasbourg et CNRS, 
7 rue René Descartes 
67000 Strasbourg, France}
\email{combe@math.unistra.fr}

\begin{document}

\begin{abstract}
Hochschild lattices are specific intervals in the dexter meet-semilattices recently introduced by Chapoton. A natural geometric realization of these lattices leads to some cell complexes introduced by Saneblidze, called the Hochschild polytopes. We obtain several geometrical properties of the Hochschild lattices, namely we give cubic realizations, establish that these lattices are EL-shellable, and show that they are constructible by interval doubling. We also prove several combinatorial properties as the enumeration of their $k$-chains and compute their degree polynomials.
\end{abstract}

\maketitle

\tableofcontents

\section*{Introduction}
In~\cite{Cha20}, Chapoton introduces new meet-semilattices called dexter posets, defined on the set of Dyck paths, endowed with the dexter order. An interesting and surprising link is found in this article: a connection between some specific intervals of dexter posets and cell complexes introduced by Saneblidze~\cite{San09, San11} in the area of algebraic topology. These cell complexes are called  Hochschild polytopes by Saneblidze. They provide, in the context of algebraic topology, combinatorial cellular models of free loops spaces. There are several ways to build Hochschild polytopes. For instance, they can be obtained by a sequence of truncations of the $n$-simplex, where $n$ is the dimension of the polytopes~\cite{RS18}.
\smallbreak

It is shown in~\cite{Cha20} that the set of Dyck paths in these specific intervals in dexter posets is in bijection with a set of words defined on the alphabet $\{0, 1, 2\}$ satisfying some conditions. Better than that, by considering the poset on this set of words endowed with the componentwise order, Chapoton shows that a covering relation on Dyck paths for the dexter order implies by this bijection a covering relation on the corresponding words. 
\smallbreak

As a first contribution of the present work, we show the reverse implication. This implies that the two posets are isomorphic. Moreover, we show that these posets are lattices. Because of their links with cell complexes of Saneblidze, we call these lattices Hochschild lattices. 
Our goal is to present a geometric and combinatorial exploration of Hochschild lattices, revealing several interesting features. To this aim, we shall mainly work with the word version of the lattice previously mentioned, whose elements are called triwords.   
\smallbreak

In the first section, we recall several definitions by starting with the one of the dexter semilattices and the bijection between Dyck paths of the specific intervals and triwords. 
We divide our study of the posets into two strands: a geometric one and a combinatorial one.
Section~\ref{sec:geom-prop} is devoted to the geometric properties. First, we provide a natural geometric realization for Hochschild lattices, by placing triwords of size $n$ in the space $\mathbb{R}^n$ and by linking by an edge triwords which are in a covering relation. Thanks to this realization, called cubic realization, we are able to show that Hochschild lattices are EL-shellable and constructible by interval doubling (or equivalently congruence uniform~\cite{Muh19}). 
Section~\ref{sec:combi-prop} is about enumerative and combinatorial results. We give here for instance the degree polynomial of the Hochschild lattices that enumerates the triwords with respect to their coverings and the elements they cover. We also provide a formula to compute the number of intervals of these lattices, as well as a method to compute the number of $k$-chains. Section~\ref{sec:combi-prop} ends with the introduction of an interesting subposet of the Hochschild poset, which seems to have similar nice properties.
An appendix on Coxeter polynomials written by Chapoton is added at the end of this article.
\smallbreak

\subsubsection*{General notations and conventions}
Throughout this article, for all words $u$, we denote by $u_i$ the $i$-th letter of $u$. For any word $a$ and integer $k$, $a^k$ is the word $a$ repeated $k$ times.
For all integers $i$ and $j$, $[i, j]$ denotes the set $\{i, i + 1, \dots, j\}$. For any integer $i$, $[i]$ denotes the set $[1, i]$. Graded sets are sets decomposing as a disjoint
union $S = \bigsqcup_{n \geq 0} S(n)$. For any $x \in S$, the unique $n \geq 0$ such that $x
\in S(n)$ is the size $|x|$ of $x$. The empty word is denoted by $\epsilon$.
For all matrices $M$, we denote by $M(i,j)$ the entry at the $i$-th line and the $j$-th column.
All sets considered in this article are finite.
\smallbreak

\subsubsection*{Acknowledgements}
The author was supported by the ANR project Combiné ANR-19-CE48-0011. The author would also like to thank Frédéric Chapoton for the subject and for his many advices.
\section{Definitions and first properties}\label{sec:Def-properties}

\subsection{Hochschild polytopes and triwords}\label{subsec:Hoch}
Let $n \geq 0$ and $w = a_1 a_2 \dots a_n$ be a word of size $n$. The \Def{prefixes} of $w$ are the $n + 1$ words $\epsilon$, $a_1 \dots a_i$, and the \Def{suffixes} of $w$ are the $n + 1$ words $\epsilon$, $a_i \dots a_n$, with $i \in [n]$. A word $x$ is a \Def{factor} of $w$ if there is a prefix $p$ and a suffix $s$ such that $w = p x s$. A word $y$ is a \Def{subword} of $w$ if $y$ can be obtained by deleting letters in $w$. For instance, $radar$ is a subword of $abracadabra$. 
\smallbreak

To describe our objects introduced in the sequel, we use the regular expression notation~\cite{Sak09}.
Recall that for a letter $a$, $a^*$ denote the set of words $a^k$ for any $k\in \mathbb{N}$, and $a^+$ denote the set of words $aa^*$ . Besides, for two expressions $r$ and $s$, $r + s$ is the union of the two sets denoted by $r$ and $s$.
\smallbreak

For any $n\geq 0$, a \Def{Dyck path} of size $n$ is a lattice path from $(0,0)$ to $(2n, 0)$ which stays above the horizontal line, and which consists only of north-east steps and south-east steps. The graded set of Dyck paths is denoted by $\SetDyckPath$ where the size of a Dyck path is its number of north-east steps.
To simplify, we see a Dyck path of size $n$ as a binary sequence of length $2n$ where the letter $1$ encodes a north-east step and the letter $0$ encodes a south-east step. In this section, we recall several definitions, concepts, and notations given in~\cite{Cha20}.
\smallbreak

Let $d \in \SetDyckPath(n)$. 
The Dyck path $d$ is \Def{primitive} if for all Dyck paths $x$ and $y$ such that $d = xy$, one has $x = \epsilon$ or $y = \epsilon$.
A factor $x$ is a \Def{subpath} of $d$ if $x$ is a Dyck path.
A subpath $x$ of $d$ is \Def{movable} if $x$ is primitive and if there is a prefix $p$ and a suffix $s$ such that $d = p1 0^{m} x s$, where $m > 0$, and either $s = \epsilon$ or the first letter of $s$ is $1$. Figure~\ref{fig:example movable paths} gives two examples of movable subpaths. 
\smallbreak

\begin{figure}[ht]
    \centering
    \subfloat[][]{
    \centering
    \scalebox{1}{
        \begin{tikzpicture}[Centering,scale=.4]
            \draw[Grid](0,0)grid(10,2);
            \node[PathNode](0)at(0,0){};
            \node[PathNode](1)at(1,1){};
            \node[PathNode](2)at(2,2){};
            \node[PathNode](3)at(3,1){};
            \node[PathNode](4)at(4,0)[circle,draw=Col4,fill=Col4!20,thick,inner sep=0pt,minimum size=1.5mm]{};
            \node[PathNode](5)at(5,1)[circle,draw=Col4,fill=Col4!20,thick,inner sep=0pt,minimum size=1.5mm]{};
            \node[PathNode](6)at(6,0)[circle,draw=Col4,fill=Col4!20,thick,inner sep=0pt,minimum size=1.5mm]{};
            \node[PathNode](7)at(7,1){};
            \node[PathNode](8)at(8,2){};
            \node[PathNode](9)at(9,1){};
            \node[PathNode](10)at(10,0){};
            \draw[PathStep](0)--(1);
            \draw[PathStep](1)--(2);
            \draw[PathStep](2)--(3);
            \draw[PathStep](3)--(4);
            \draw[PathStep](4)--(5)[color=Col4,very thick];
            \draw[PathStep](5)--(6)[color=Col4,very thick];
            \draw[PathStep](6)--(7);
            \draw[PathStep](7)--(8);
            \draw[PathStep](8)--(9);
            \draw[PathStep](9)--(10);
        \end{tikzpicture}}
    \label{subfig:movable-path1}}
    \qquad
    \subfloat[][]{
    \centering
    \scalebox{1}{
		\begin{tikzpicture}[Centering,scale=.4]
            \draw[Grid](0,0)grid(10,2);
            \node[PathNode](0)at(0,0){};
            \node[PathNode](1)at(1,1){};
            \node[PathNode](2)at(2,2){};
            \node[PathNode](3)at(3,1){};
            \node[PathNode](4)at(4,0){};
            \node[PathNode](5)at(5,1){};
            \node[PathNode](6)at(6,0)[circle,draw=Col4,fill=Col4!20,thick,inner sep=0pt,minimum size=1.5mm]{};
            \node[PathNode](7)at(7,1)[circle,draw=Col4,fill=Col4!20,thick,inner sep=0pt,minimum size=1.5mm]{};
            \node[PathNode](8)at(8,2)[circle,draw=Col4,fill=Col4!20,thick,inner sep=0pt,minimum size=1.5mm]{};
            \node[PathNode](9)at(9,1)[circle,draw=Col4,fill=Col4!20,thick,inner sep=0pt,minimum size=1.5mm]{};
            \node[PathNode](10)at(10,0)[circle,draw=Col4,fill=Col4!20,thick,inner sep=0pt,minimum size=1.5mm]{};
            \draw[PathStep](0)--(1);
            \draw[PathStep](1)--(2);
            \draw[PathStep](2)--(3);
            \draw[PathStep](3)--(4);
            \draw[PathStep](4)--(5);
            \draw[PathStep](5)--(6);
            \draw[PathStep](6)--(7)[color=Col4,very thick];
            \draw[PathStep](7)--(8)[color=Col4,very thick];
            \draw[PathStep](8)--(9)[color=Col4,very thick];
            \draw[PathStep](9)--(10)[color=Col4,very thick];
        \end{tikzpicture}}
    \label{subfig:movable-path2}}
  \caption{\footnotesize A Dyck path $1100101100$ with two movable paths, in blue (dark).}
    \label{fig:example movable paths}
\end{figure}
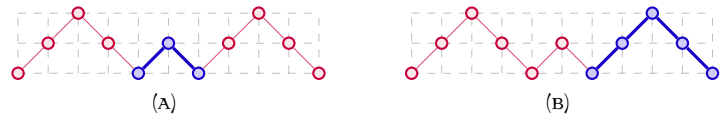

Recall the definition of the dexter order, introduced in~\cite{Cha20}.
For any $n \geq 0$, let $d :=  p 10^{m} x s$ be a Dyck path of size $n$, where $x$ is movable. Let $d_{\alpha,\beta}$ be the Dyck path of size $n$ such that $d_{\alpha,\beta} := p1 0^{\alpha} x 0^{\beta} s$, where $\alpha + \beta = m$ and $\beta >0$. We set that $d \Covered_{\DexterOrder} d'$ if and only if $d' = d_{\alpha,\beta}$, for any $x$ movable subpath of $d$. The dexter order, denoted by $\Leq_{\DexterOrder}$, is the reflexive and transitive closure of $\Covered_{\DexterOrder}$, which is the covering relation. Figure~\ref{fig:example covering dexter} depicts the three covering Dyck paths of the Dyck path $1100101100$ seen in Figure~\ref{fig:example movable paths} for the dexter order. Note that the chosen movable subpath $x$ is no longer movable in $d_{\alpha,\beta}$. 
\smallbreak

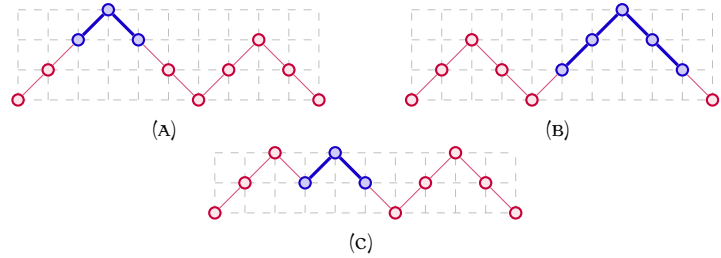
\begin{figure}[ht]
    \centering
    \subfloat[][]{
    \centering
    \scalebox{1}{
    \begin{tikzpicture}[Centering,scale=.4]
            \draw[Grid](0,0)grid(10,3);
            \node[PathNode](0)at(0,0){};
            \node[PathNode](1)at(1,1){};
            \node[PathNode](2)at(2,2)[circle,draw=Col4,fill=Col4!20,thick,inner sep=0pt,minimum size=1.5mm]{};
            \node[PathNode](3)at(3,3)[circle,draw=Col4,fill=Col4!20,thick,inner sep=0pt,minimum size=1.5mm]{};
            \node[PathNode](4)at(4,2)[circle,draw=Col4,fill=Col4!20,thick,inner sep=0pt,minimum size=1.5mm]{};
            \node[PathNode](5)at(5,1){};
            \node[PathNode](6)at(6,0){};
            \node[PathNode](7)at(7,1){};
            \node[PathNode](8)at(8,2){};
            \node[PathNode](9)at(9,1){};
            \node[PathNode](10)at(10,0){};
            \draw[PathStep](0)--(1);
            \draw[PathStep](1)--(2);
            \draw[PathStep](2)--(3)[color=Col4,very thick];
            \draw[PathStep](3)--(4)[color=Col4,very thick];
            \draw[PathStep](4)--(5);
            \draw[PathStep](5)--(6);
            \draw[PathStep](6)--(7);
            \draw[PathStep](7)--(8);
            \draw[PathStep](8)--(9);
            \draw[PathStep](9)--(10);
        \end{tikzpicture}}
    \label{subfig:cover1}}
    \qquad
    \subfloat[][]{
    \centering
    \scalebox{1}{
    \begin{tikzpicture}[Centering,scale=.4]
            \draw[Grid](0,0)grid(10,3);
            \node[PathNode](0)at(0,0){};
            \node[PathNode](1)at(1,1){};
            \node[PathNode](2)at(2,2){};
            \node[PathNode](3)at(3,1){};
            \node[PathNode](4)at(4,0){};
            \node[PathNode](5)at(5,1)[circle,draw=Col4,fill=Col4!20,thick,inner sep=0pt,minimum size=1.5mm]{};
            \node[PathNode](6)at(6,2)[circle,draw=Col4,fill=Col4!20,thick,inner sep=0pt,minimum size=1.5mm]{};
            \node[PathNode](7)at(7,3)[circle,draw=Col4,fill=Col4!20,thick,inner sep=0pt,minimum size=1.5mm]{};
            \node[PathNode](8)at(8,2)[circle,draw=Col4,fill=Col4!20,thick,inner sep=0pt,minimum size=1.5mm]{};
            \node[PathNode](9)at(9,1)[circle,draw=Col4,fill=Col4!20,thick,inner sep=0pt,minimum size=1.5mm]{};
            \node[PathNode](10)at(10,0){};
            \draw[PathStep](0)--(1);
            \draw[PathStep](1)--(2);
            \draw[PathStep](2)--(3);
            \draw[PathStep](3)--(4);
            \draw[PathStep](4)--(5);
            \draw[PathStep](5)--(6)[color=Col4,very thick];
            \draw[PathStep](6)--(7)[color=Col4,very thick];
            \draw[PathStep](7)--(8)[color=Col4,very thick];
            \draw[PathStep](8)--(9)[color=Col4,very thick];
            \draw[PathStep](9)--(10);
        \end{tikzpicture}}
    \label{subfig:cover3}}
    \qquad
    \subfloat[][]{
    \centering
    \scalebox{1}{
    \begin{tikzpicture}[Centering,scale=.4]
            \draw[Grid](0,0)grid(10,2);
            \node[PathNode](0)at(0,0){};
            \node[PathNode](1)at(1,1){};
            \node[PathNode](2)at(2,2){};
            \node[PathNode](3)at(3,1)[circle,draw=Col4,fill=Col4!20,thick,inner sep=0pt,minimum size=1.5mm]{};
            \node[PathNode](4)at(4,2)[circle,draw=Col4,fill=Col4!20,thick,inner sep=0pt,minimum size=1.5mm]{};
            \node[PathNode](5)at(5,1)[circle,draw=Col4,fill=Col4!20,thick,inner sep=0pt,minimum size=1.5mm]{};
            \node[PathNode](6)at(6,0){};
            \node[PathNode](7)at(7,1){};
            \node[PathNode](8)at(8,2){};
            \node[PathNode](9)at(9,1){};
            \node[PathNode](10)at(10,0){};
            \draw[PathStep](0)--(1);
            \draw[PathStep](1)--(2);
            \draw[PathStep](2)--(3);
            \draw[PathStep](3)--(4)[color=Col4,very thick];
            \draw[PathStep](4)--(5)[color=Col4,very thick];
            \draw[PathStep](5)--(6);
            \draw[PathStep](6)--(7);
            \draw[PathStep](7)--(8);
            \draw[PathStep](8)--(9);
            \draw[PathStep](9)--(10);
        \end{tikzpicture}}
    \label{subfig:cover2}}
  \caption{\footnotesize The three Dyck paths covering the Dyck path $1100101100$ for the dexter order.}
    \label{fig:example covering dexter}
\end{figure}

The set $\SetDyckPath(n)$ endowed with the dexter order is a meet-semilattice with many properties highlighted in the article of Chapoton. In this article, we restrict ourselves to a particular interval of this semilattice. 
\smallbreak

In any Dyck path $d$, a factor $01$ is called a \Def{valley}. The \Def{height} of a valley is the ordinate of its corresponding middle point in the path.
\smallbreak

For any $n \geq 1$, let $\F(n)$ be the interval in $\SetDyckPath(n+2)$ between $1100(10)^{n}$ and $11^{n}0^{n}100$.
In particular, any $d$ in the interval $\F(n)$ satisfies the three following assertions:
\begin{itemize}
\item the sequence of heights of the valleys in $d$ is weakly decreasing from left to right,
\item the Dyck path $d$ ends either with $010$ or $0100$,
\item the Dyck path $d$ starts with $11$ and has only valleys of height $0$ or $1$.
\end{itemize}

For any $n \geq 1$, let us recall the bijection $\rho$ between $\F(n)$ and the set of words of length $n$ in the alphabet $\{0,1,2\}$ satisfying some conditions. Let $d \in \F(n)$ and $N_2$ be an integer initially set to $0$. 
By reading from left to right the word $d$, let us build the word $u$, initially the empty word, by following the two conditions,
\begin{enumerate}[label=(\roman*)]
\item when two consecutive $1$ are read in $d$, except the first two letters of $d$, then $1$ is added to $N_2$,
\item when a valley of height $h$ is read in $d$, the word $h 2^{N_2}$ is added at the end of the building word $u$, and $N_2$ is then set back to $0$\footnote{The word $2^{N_2}$ means that the letter $2$ is repeated $N_2$ times.}.
\end{enumerate}
The result $\rho(d)$ is the word $u$ obtained after reading all $d$. The length of $u$ is $n$ because, except the two initial letters $1$, every letter $1$ in $d$ contributes a letter in $u$.
\smallbreak

For instance, the image by $\rho$ of the two Dyck paths $1101001010$ and $1110010010$, both in $\F(3)$, are respectively $100$ and $120$.
\smallbreak

Since we are going to work in this article on the set $\rho(\F(n))$, we need to give a description of this set which is independent of the construction induced by $\rho$.
\smallbreak

For any $n \geq 1$, a word $u$ of size $n$ is a \Def{triword} of the same size if $u$ satisfies, for all $i \in [n]$,
\begin{enumerate}[label=(\roman*)]
\item $u_i \in \{0,1,2\}$,
\item $u_1 \ne 2$,
\item\label{subword01} if $u_i = 0$ then $u_j \ne 1$ for all $j > i$.
\end{enumerate}
The graded set of triwords is denoted by $\SetTriword$, where the size of a triword is its number of letters. 
\smallbreak

For instance, 
\begin{equation}
\begin{split}
\SetTriword(1) &= \{0, 1\}, \qquad \SetTriword(2) = \{00, 02, 10, 11, 12\}, \\
\SetTriword(3) &= \{000, 020, 002, 100, 022, 110, 102, 120, 111, 121, 112, 122\}.
\end{split}
\end{equation}
\smallbreak

Note that the condition~\ref{subword01} means that there is no subword $01$ in any triword. 
\smallbreak

\begin{Lemma}\label{grammar}
The set of triwords is specified by the formal grammar
\begin{align}
A &= \epsilon + 0A + 2A, \label{grammar1}\\
B &= \epsilon + 0A + 1B + 2B, \label{grammar2}\\
\SetTriword &= \epsilon + 0A + 1B. \label{grammar3}
\end{align}
\end{Lemma}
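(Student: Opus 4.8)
The plan is to show that the three nonterminals of the grammar generate explicit word sets, and in particular that the start symbol $\SetTriword$ generates exactly the triwords; the whole argument proceeds by induction on word length together with a case analysis on the first letter, treating each inclusion in turn. Throughout I read $\epsilon$ as the unique triword of size $0$.

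First I would identify the language of the nonterminal $A$. The production $A = \epsilon + 0A + 2A$ is the standard right-linear grammar for a Kleene star, so an immediate induction on length shows that $A$ generates exactly $\{0,2\}^{*}$, the set of words over $\{0,1,2\}$ containing no letter $1$.

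Next I would characterize $B$ as the set of words over $\{0,1,2\}$ avoiding the subword $01$, which is precisely condition~\ref{subword01}. For the inclusion that every word produced by $B$ avoids $01$, I argue by induction on the derivation: prepending a $1$ or a $2$ to a word avoiding $01$ cannot create an occurrence of $01$, while the production $B \to 0A$ yields a $0$ followed by a word of $\{0,2\}^{*}$, which contains no $1$ at all and hence no $01$. For the converse, given a word $w$ avoiding $01$ I split on its first letter: if it is $1$ or $2$, the suffix still avoids $01$ and the claim follows by induction via $B \to 1B$ or $B \to 2B$; the decisive case is a leading $0$, where avoidance of $01$ forces the suffix to contain no $1$, placing it in $A = \{0,2\}^{*}$ and matching $B \to 0A$.

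Finally, with $A$ and $B$ understood, the outer rule $\SetTriword = \epsilon + 0A + 1B$ is handled by the same dichotomy. A nonempty triword cannot begin with $2$ by condition~(ii), so it begins with $0$ or $1$: a leading $0$ forces the remainder into $\{0,2\}^{*}=A$ by avoidance of $01$, and a leading $1$ leaves a suffix that still avoids $01$, hence lies in $B$; conversely every word of $0A$ or $1B$ begins with $0$ or $1$ and avoids $01$, so it is a triword. I expect no serious obstacle; the only point needing care is the $0$-case of $B$, where a single leading $0$ already prohibits every later $1$, which is exactly what redirects the tail into $A$ rather than back into $B$.
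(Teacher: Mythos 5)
Your proof is correct and follows essentially the same route as the paper: identify $A$ as $\{0,2\}^{*}$, show by induction that $B$ is the set of words avoiding the subword $01$, and then split a triword on its first letter (which cannot be $2$), with a leading $0$ forcing the tail into $A$ and a leading $1$ sending the tail to $B$. Your version merely spells out the inductions and both inclusions in more detail than the paper does.
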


\begin{proof}
First, $A$ is the set of all words on ${0,2}$. 
By induction on the length of the words, one can prove that $B$ is the set of all words on $\{0,1,2\}$ avoiding the subword $01$.
Finally, since a triword beginning by $0$ has no occurrences of $1$, and a triword beginning by $1$ writes as $1u'$ where $u' \in B$, \eqref{grammar3} holds.
\end{proof}

From Lemma~\ref{grammar} one obtains the generating series
\begin{align}
G_A (z) &= 1 + 2z G_A(z), \label{ga}\\
G_B (z) &= 1 + z G_A(z) + 2zG_B(z), \label{gb}\\
G_{\SetTriword} (z) &= 1 + z G_A(z) + zG_B(z)
\end{align}
of $A$, $B$, and $\SetTriword$.
We deduce that $\SetTriword$ admits 
\begin{equation}
G_{\SetTriword} (z) = \frac{(1-z)^2}{(1-2z)^2}
\end{equation}
as generating function.
Therefore, for any $n \geq 1$, the number of triwords is
\begin{equation}\label{equ:nbtriword}
\# \SetTriword(n) = 2^{n-2} (n + 3).
\end{equation}

\begin{Lemma}
For any $n \geq 1$, the image $\rho(\F(n))$ coincides with $\SetTriword(n)$.
\end{Lemma}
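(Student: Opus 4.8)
The plan is to establish the two inclusions $\rho(\F(n))\subseteq\SetTriword(n)$ and $\SetTriword(n)\subseteq\rho(\F(n))$; since $\rho$ is recalled to be a bijection from $\F(n)$ onto its image, proving both inclusions identifies that image with $\SetTriword(n)$. For the inclusion $\rho(\F(n))\subseteq\SetTriword(n)$, I would fix $d\in\F(n)$ and set $u=\rho(d)$, then check the three defining conditions of a triword against the three displayed structural properties of $\F(n)$. Every letter of $u$ is either a valley height or a $2$, and any $d\in\F(n)$ has only valleys of height $0$ or $1$, which gives condition (i). The first symbol emitted by the reading procedure is the height of the leftmost valley of $d$, hence $0$ or $1$, so $u_1\neq2$ and condition (ii) holds. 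Finally a letter $0$ in $u$ records a height-$0$ valley and a letter $1$ a height-$1$ valley; since the valley heights of $d$ weakly decrease from left to right, no height-$1$ valley lies to the right of a height-$0$ valley, and the interspersed $2$'s never produce a $1$, so $u$ has no subword $01$, which is condition (iii).

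For the reverse inclusion I would exhibit an explicit inverse. Given a triword $u$, write its unique block decomposition $u=c_1 2^{k_1}c_2 2^{k_2}\cdots c_r 2^{k_r}$ with $c_i\in\{0,1\}$ and $k_i\geq0$ (this exists precisely because $u_1\neq2$), and observe that condition (iii) forces $c_1\geq c_2\geq\cdots\geq c_r$. I then set
\[
\sigma(u)=1^{k_1+2}\,0^{\,k_1+2-c_1}\;1^{k_2+1}0^{\,k_2+1+c_1-c_2}\cdots 1^{k_r+1}0^{\,k_r+1+c_{r-1}-c_r}\;1\,0^{\,c_r+1}.
\]
The weak decrease of the $c_i$ guarantees that every exponent is positive, so this is a genuine sequence of alternating up- and down-runs; a short computation shows that its $i$-th valley has height $c_i$, that the total number of up-steps is $n+2$, and that the path stays weakly above the axis, so $\sigma(u)\in\SetDyckPath(n+2)$. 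Running the reading procedure on $\sigma(u)$ recovers exactly $u$: the initial run $1^{k_1+2}$ contributes $k_1$ counted adjacencies (the first two $1$'s being discarded), each run $1^{k_{i+1}+1}$ contributes the $k_{i+1}$ adjacencies read after the $i$-th valley, and the valley heights supply the letters $c_i$. Thus $\rho(\sigma(u))=u$.

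The main obstacle is the remaining point: that $\sigma(u)$ actually lies in the \emph{interval} $\F(n)$ and not merely in $\SetDyckPath(n+2)$. By design $\sigma(u)$ satisfies the three displayed necessary conditions, but membership in $\F(n)$ is a statement about the dexter order, so what must really be proved is
\[
1100(10)^n\Leq_{\DexterOrder}\sigma(u)\Leq_{\DexterOrder}11^n0^n100.
\]
I would prove this by first checking that $\sigma(0^n)=1100(10)^n$ and $\sigma(1 2^{n-1})=11^n0^n100$ are the two endpoints of the interval, and then producing explicit chains of dexter coverings from $1100(10)^n$ up to $\sigma(u)$ and from $\sigma(u)$ up to $11^n0^n100$, each elementary step being a valid application of $\Covered_{\DexterOrder}$ to a movable subpath; equivalently, one shows that the three displayed conditions in fact characterize the interval. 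Once interval membership is secured, $\rho(\sigma(u))=u$ yields $u\in\rho(\F(n))$ and completes the reverse inclusion.

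An alternative that sidesteps the order analysis is purely enumerative: combining $\rho(\F(n))\subseteq\SetTriword(n)$ with the injectivity of $\rho$ and the count $\#\SetTriword(n)=2^{n-2}(n+3)$ from \eqref{equ:nbtriword}, it suffices to verify independently that $\#\F(n)=2^{n-2}(n+3)$, after which equality of the two finite sets is automatic. Either way, the delicate part is relating the structural conditions on the Dyck paths to genuine membership in the dexter interval; the forward inclusion and the construction of $\sigma$ are routine, whereas pinning down $\F(n)$ inside $\SetDyckPath(n+2)$ is where the real work lies.
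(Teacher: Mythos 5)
Your ``alternative that sidesteps the order analysis'' is in fact the paper's proof: the author checks the forward inclusion $\rho(\F(n))\subseteq\SetTriword(n)$ exactly as you do (first letter is $0$ or $1$; weak decrease of valley heights forbids a $0$ followed later by a $1$), and then closes the argument by citing Chapoton's result that $\#\F(n)=2^{n-2}(n+3)$, which together with the injectivity of $\rho$ and the count \eqref{equ:nbtriword} forces equality of the two finite sets. So the enumerative route is complete, with the one caveat that the count of $\F(n)$ is imported from \cite{Cha20} rather than ``verified independently'' as you phrase it; you should be aware that this is an external input, not something re-derived here.

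Your primary route, via the explicit inverse $\sigma$, is genuinely different and would yield a more self-contained and constructive proof, but as written it has a real gap at exactly the point you flag: you never establish $1100(10)^n\Leq_{\DexterOrder}\sigma(u)\Leq_{\DexterOrder}11^n0^n100$. The paper only asserts that elements of $\F(n)$ \emph{satisfy} the three displayed structural conditions; it nowhere claims these conditions \emph{characterize} the interval, so you cannot lean on that equivalence, and the covering chains you invoke are not produced. The formula for $\sigma$ itself checks out ($\sigma(0^n)=1100(10)^n$, $\sigma(12^{n-1})=11^n0^n100$, valley heights and step counts are right), so what is missing is precisely the dexter-order analysis. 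If you want the constructive proof, that analysis must be carried out; otherwise, fall back on the enumerative argument, which is what the paper does.
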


\begin{proof}
Let $d \in \F(n)$ such that $\rho (d):= u$. Then the first letter of $u$ is either $0$ or $1$. Besides, a letter $0$ cannot be follows by a letter $1$ because the height of the valleys in $d$ is weakly decreasing from left to right. Thus, one has $u \in \SetTriword(n)$. 
\smallbreak

Moreover, we know from~\cite{Cha20} that the number of elements in $\F(n)$ is~\eqref{equ:nbtriword}. 
\end{proof}

\subsection{Isomorphism of posets}
We endow the set of triwords with the componentwise order and show that the bijection $\rho$ is an isomorphism of posets. Then, we describe the meet and join of the poset so defined.
\smallbreak

For any $n \geq 1$, let $\Leq$ be the partial order on $\SetTriword(n)$ satisfying $u \Leq v$ for any $u, v \in \SetTriword(n)$ such that $u_i \leq v_i$ for all $i \in [n]$.
The set $\SetTriword(n)$ endowed with $\Leq$ is the \Def{Hochschild poset} of order $n$. 
\smallbreak

We set that $u \Covered v$ if and only if $u \Leq v$ and there is only one index $i$ such that $u_i < v_i$, and if there is $w \in \SetTriword(n)$ such that $u \Leq w \Leq v$, then either $w = u$ or $w = v$.
Obviously, the binary relation $\Covered$ is contained in the covering relation of $(\SetTriword(n),\Leq)$.
\smallbreak

Note that the minimal element of $\SetTriword(n)$ is $0^n$ and the maximal element is $12^{n-1}$.
\smallbreak

\begin{Proposition}\label{prop:coveredHochschild}
For any $n \geq 1$, the binary relation $\Covered$ is the covering relation of the Hochschild poset $\SetTriword(n)$.
\end{Proposition}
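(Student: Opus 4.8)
The plan is to prove the two inclusions between $\Covered$ and the covering relation of $(\SetTriword(n),\Leq)$ separately. One inclusion is immediate, as the text already observes: the defining clause of $\Covered$ that no $w \in \SetTriword(n)$ satisfies $u \Leq w \Leq v$ apart from $u$ and $v$ is exactly the requirement that nothing lie strictly between $u$ and $v$, so $\Covered$ is contained in the covering relation. Hence all the content lies in the reverse inclusion: I must show that whenever $u$ is covered by $v$, the triwords $u$ and $v$ differ in a single coordinate. I would argue by contraposition, assuming $u \Leq v$ with $u$ and $v$ differing in at least two positions, and then exhibiting a triword $w$ with $u \Leq w \Leq v$ and $w \notin \{u,v\}$, which shows $u$ is not covered by $v$.

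For the construction, let $i$ be the smallest index for which $u_i < v_i$, and define $w$ by $w_i := v_i$ and $w_k := u_k$ for $k \ne i$. For every $k$ one has $w_k \leq v_k$ (with equality at $k=i$ and inherited from $u \Leq v$ otherwise), so $w \Leq v$; likewise $u \Leq w$, since $w$ only raises the $i$-th letter of $u$. Because $u$ and $v$ differ in at least two positions, some index $j > i$ also satisfies $u_j < v_j$, and there $w_j = u_j < v_j$, so $w \ne v$; and $w \ne u$ because $w_i = v_i > u_i$. Thus $u \Leq w \Leq v$ with $w \notin \{u,v\}$, provided $w$ is genuinely a triword.

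The main obstacle, and the only nontrivial point, is to check that $w \in \SetTriword(n)$, i.e. that raising the $i$-th letter does not break the triword conditions. Conditions (i) and (ii) are immediate, since only position $i$ changes and $w_1$ equals either $u_1$ or $v_1$, neither of which is $2$. The delicate condition is (iii), the absence of the subword $01$, and here the choice of $i$ as the \emph{leftmost} differing index is decisive. If $w$ contained a subword $01$, then, as $u$ has none and $w$ differs from $u$ only at position $i$, the index $i$ would have to supply the altered letter. It cannot be the $0$, since $w_i = v_i \geq 1$; so it would be the $1$, forcing $v_i = 1$ and the existence of some $a < i$ with $w_a = 0$. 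But for $a < i$ the letters of $u$ and $v$ coincide by minimality of $i$, so $v_a = w_a = 0$, and then $v_a = 0$ together with $v_i = 1$ would be a subword $01$ in $v$, contradicting that $v$ is a triword.

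Concluding, $w$ is a triword strictly between $u$ and $v$, so $u$ is not covered by $v$; contraposing, any covering pair differs in at least one index (as $u \ne v$) and, by the above, not in two or more, hence in exactly one. Together with the easy inclusion this yields $\Covered$ equal to the covering relation. I expect the verification of condition (iii) to be the step demanding the most care, precisely because it is where the combinatorial ``no $01$'' constraint interacts with the componentwise order, and the leftmost-index trick is what makes that interaction harmless.
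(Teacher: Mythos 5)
Your proof is correct and follows essentially the same route as the paper: both take the leftmost index $i$ where $u$ and $v$ differ, raise only that letter of $u$ to $v_i$, and use the minimality of $i$ (together with $v$ being a triword) to rule out a forbidden subword $01$, concluding that a covering pair can differ in only one coordinate. Your contrapositive framing and the explicit case analysis for condition (iii) are just a cleaner write-up of the paper's direct argument.
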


\begin{proof}
Let $u, v \in \SetTriword(n)$ such that $v$ covers $u$. The case $n = 1$ is clear. Let $n > 1$ and let $i$ be the minimal index such that $u_i \ne v_i$, and let $w := u_1 \dots u_{i-1} v_i u_{i+1} \dots u_{n}$ be the word with the same letters as $u$, except for the $i$-th letter. Since $v_i > u_i$, either $w$ is obtained by replacing in $u$ the $i$-th letter $0$ by $1$ or by $2$, or by replacing in $u$ the $i$-th letter $1$ by $2$. In both cases, $v_i$ is not $0$. Moreover, since $i$ is the minimal index such that $u_i \ne v_i$, if there is a letter $0$ before $u_i$ in $u$, then this letter exist also in $v$, and so $v_i$ cannot be $1$. Therefore, the subword $01$ cannot be generated in $w$. Thus, the word $w$ is a triword. It follows that there is a triword $w' \Leq w$ such that $u$ is covered by $w'$. One can conclude that between two triwords in covering relation, there is exactly one different letter.
\end{proof}

For any Dyck path $d =  p 10^{m} x s$ with $m >0$, $p$ a prefix, $s$ a suffix, and $x$ a movable subpath, let $N(d,x)$ be the number of consecutive $0$ letters that appear before $x$ in $d$.
\smallbreak

\begin{Proposition}\label{prop:isomorphism}
For any $n \geq 1$, the map $\rho$ is an isomorphism of posets from $\F(n)$ to $\SetTriword(n)$.
\end{Proposition}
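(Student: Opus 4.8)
The plan is to promote $\rho$ from a mere bijection to an order isomorphism by controlling covering relations on both sides. Since $\F(n)$ is an interval of the dexter meet-semilattice and $\SetTriword(n)$ is finite, both partial orders are the reflexive--transitive closures of their covering relations. Moreover, the covering relation of the interval $\F(n)$ is exactly the restriction of $\Covered_{\DexterOrder}$ to $\F(n)$: covering is a local (convexity) property, so for $d, d' \in \F(n)$ no element of the ambient semilattice lies strictly between them if and only if no element of $\F(n)$ does. By Proposition~\ref{prop:coveredHochschild}, the covering relation of $\SetTriword(n)$ is $\Covered$. Hence it suffices to prove that $\rho$ restricts to a bijection between covering pairs, i.e. that for $d, d' \in \F(n)$ one has $d \Covered_{\DexterOrder} d'$ if and only if $\rho(d) \Covered \rho(d')$; a bijection between finite posets that carries covering pairs bijectively onto covering pairs is automatically an order isomorphism.

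The forward implication $d \Covered_{\DexterOrder} d' \Rightarrow \rho(d) \Covered \rho(d')$ is established in~\cite{Cha20}, so $\rho$ is already known to be order preserving. The content to add is therefore the reverse implication, from which order preservation of $\rho^{-1}$, and thus the isomorphism, follows.

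For the reverse implication, fix a cover $u \Covered v$ in $\SetTriword(n)$. By Proposition~\ref{prop:coveredHochschild} and its proof, $u$ and $v$ agree in every position but one index $i$, where $v_i > u_i$, and the admissible elementary increments are $0 \mapsto 1$, $1 \mapsto 2$, and $0 \mapsto 2$. Writing $d := \rho^{-1}(u)$, the strategy is to read off, from the block decomposition produced by $\rho$, the valley of $d$ responsible for the letter in position $i$, and then to exhibit a factorization $d = p \,1\, 0^{m} x s$ with $x$ movable for which the dexter slide $d_{\alpha,\beta} = p\, 1\, 0^{\alpha} x 0^{\beta} s$ satisfies $\rho(d_{\alpha,\beta}) = v$. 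Concretely, a change of valley height (the increment acting on the head of a block) corresponds to redistributing the run $0^{m}$ around $x$, while the bookkeeping of the counter $N_2$ in the definition of $\rho$ is governed by the quantity $N(d,x)$; matching the split $(\alpha,\beta)$ to the prescribed increment at position $i$ then yields $d \Covered_{\DexterOrder} \rho^{-1}(v)$.

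The main obstacle is precisely this lifting step: one must verify, case by case on the three elementary increments, that the single-letter change of $u$ into $v$ is realized by a genuine dexter covering move, which means checking the movability of the chosen subpath $x$ and confirming that the resulting path is exactly $d_{\alpha,\beta}$ with $\alpha + \beta = m$ and $\beta > 0$. The increments $1 \mapsto 2$ and $0 \mapsto 2$ are the delicate ones, since they alter the block structure (a former valley letter is absorbed among trailing $2$'s and attached to the preceding block), so one must track carefully how $\rho$ reassembles the blocks after the slide. A more economical alternative, once the forward implication is granted, is a counting argument: the forward map is an injection from the set of dexter covering pairs of $\F(n)$ into the set of componentwise covering pairs of $\SetTriword(n)$, so it would suffice to show these two sets have equal cardinality to force surjectivity, and hence the reverse implication; this trades the case analysis for an independent enumeration of covers.
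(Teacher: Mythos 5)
Your proposal follows essentially the same route as the paper: the forward implication is quoted from Chapoton, and the reverse implication is obtained by lifting each of the three elementary increments ($0\mapsto 1$, $0\mapsto 2$, $1\mapsto 2$) of a cover in $\SetTriword(n)$ to a dexter slide of a movable subpath $x$, with the redistribution of the run $0^m$ and the bookkeeping via $N(d,x)$ matching exactly the mechanism the paper uses. The only difference is one of completeness: you announce the three-case verification as the remaining obstacle rather than carrying it out, whereas the paper executes the cases explicitly (by exhibiting the corresponding moves on the Dyck paths); your auxiliary framing via covering pairs and the suggested counting alternative are not needed and do not appear in the paper.
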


\begin{proof}
Let $d,b \in \F(n)$. We know (Lemma 9.9 from~\cite{Cha20}) that if $d$ covers $b$ in $\F(n)$ then the words $\rho(b)$ and $\rho(d)$ differ by exactly one letter, which increases. This implies that $\rho(b)\Leq \rho(d)$.
\smallbreak

Let $u,v \in \SetTriword(n)$ such that $u \Covered v$, and let $b$ and $d$ be the respective images of $u$ and $v$ by $\rho^{-1}$. 
Since $u \Covered v$, there is only one index $i$ such that $u_i < v_i$. Then, there are three cases: either $0$ becomes $1$ or $0$ becomes $2$, or $1$ becomes $2$. 
\begin{itemize}
\item Suppose that $u_i = 0$ and $v_i = 1$. Then, in the path $b$, there is a movable subpath $x$ (in blue (dark) in \eqref{equ:cover1}) starting at the height $0$ such that $N(b,x) \geq 2$. The height of the starting point of $x$ gives the value of $u_i$ in $u$ by the map $\rho$. In the path $d$, since only one letter changes between $u$ and $v$, the same subpath $x$ starts at the height $1$ and $N(d,x) = N(b,x) - 1$. Because of this move, we have to add one $0$ after $x$. 
\begin{equation}\label{equ:cover1}
\scalebox{1}{
		\begin{tikzpicture}[Centering,scale=.4]
            \draw[Grid](0,0)grid(6,2);
            \node[PathNode](0)at(0,0){};
            \node[PathNode](1)at(1,1){};
            \node[PathNode](2)at(2,2){};
            \node[PathNode](3)at(3,1){};
            \node[PathNode](4)at(4,0)[circle,draw=Col4,fill=Col4!20,thick,inner sep=0pt,minimum size=1.5mm]{};
            \node[PathNode](5)at(5,1)[circle,draw=Col4,fill=Col4!20,thick,inner sep=0pt,minimum size=1.5mm]{};
            \node[PathNode](6)at(6,0)[circle,draw=Col4,fill=Col4!20,thick,inner sep=0pt,minimum size=1.5mm]{};
            \draw[PathStep](0)--(1);
            \draw[PathStep](1)--(2);
            \draw[PathStep](2)--(3);
            \draw[PathStep](3)--(4);
            \draw[PathStep](4)--(5)[color=Col4,very thick];;
            \draw[PathStep](5)--(6)[color=Col4,very thick];;
        \end{tikzpicture}}
        \qquad \rightarrow \qquad
        \scalebox{1}{
        \begin{tikzpicture}[Centering,scale=.4]
            \draw[Grid](0,0)grid(6,2);
            \node[PathNode](0)at(0,0){};
            \node[PathNode](1)at(1,1){};
            \node[PathNode](2)at(2,2){};
            \node[PathNode](3)at(3,1)[circle,draw=Col4,fill=Col4!20,thick,inner sep=0pt,minimum size=1.5mm]{};
            \node[PathNode](4)at(4,2)[circle,draw=Col4,fill=Col4!20,thick,inner sep=0pt,minimum size=1.5mm]{};
            \node[PathNode](5)at(5,1)[circle,draw=Col4,fill=Col4!20,thick,inner sep=0pt,minimum size=1.5mm]{};
            \node[PathNode](6)at(6,0){};
            \draw[PathStep](0)--(1);
            \draw[PathStep](1)--(2);
            \draw[PathStep](2)--(3);
            \draw[PathStep](3)--(4)[color=Col4,very thick];;
            \draw[PathStep](4)--(5)[color=Col4,very thick];;
            \draw[PathStep](5)--(6);
        \end{tikzpicture}}
\end{equation}

\item Suppose that $u_i = 0$ and $v_i = 2$. Then, in the path $b$, there is a movable subpath $x$ (in blue (dark) in \eqref{equ:cover2}) starting at the height $0$, followed by an other subpath $y$ also starting at the height $0$. This is the height of the starting point of $y$ which gives $u_i$ in $u$ by the map $\rho$. In the path $d$, there is a subpath $z$ starting at the height $0$ followed by the subpath $y$ which is unchanged, such that $N(d,x)=0$ and $N(d,y) = N(b,x) + N(b,y)$.
\begin{equation}\label{equ:cover2}
\scalebox{1}{
		\begin{tikzpicture}[Centering,scale=.4]
            \draw[Grid](0,0)grid(8,2);
            \node[PathNode](0)at(0,0){};
            \node[PathNode](1)at(1,1){};
            \node[PathNode](2)at(2,2){};
            \node[PathNode](3)at(3,1){};
            \node[PathNode](4)at(4,0)[circle,draw=Col4,fill=Col4!20,thick,inner sep=0pt,minimum size=1.5mm]{};
            \node[PathNode](5)at(5,1)[circle,draw=Col4,fill=Col4!20,thick,inner sep=0pt,minimum size=1.5mm]{};
            \node[PathNode](6)at(6,0)[circle,draw=Col4,fill=Col4!20,thick,inner sep=0pt,minimum size=1.5mm]{};
            \node[PathNode](7)at(7,1){};
            \node[PathNode](8)at(8,0){};
            \draw[PathStep](0)--(1);
            \draw[PathStep](1)--(2);
            \draw[PathStep](2)--(3);
            \draw[PathStep](3)--(4);
            \draw[PathStep](4)--(5)[color=Col4,very thick];
            \draw[PathStep](5)--(6)[color=Col4,very thick];
            \draw[PathStep](6)--(7);
            \draw[PathStep](7)--(8);
        \end{tikzpicture}}
        \qquad \rightarrow \qquad
        \scalebox{1}{
        \begin{tikzpicture}[Centering,scale=.4]
            \draw[Grid](0,0)grid(8,3);
            \node[PathNode](0)at(0,0){};
            \node[PathNode](1)at(1,1){};
            \node[PathNode](2)at(2,2)[circle,draw=Col4,fill=Col4!20,thick,inner sep=0pt,minimum size=1.5mm]{};
            \node[PathNode](3)at(3,3)[circle,draw=Col4,fill=Col4!20,thick,inner sep=0pt,minimum size=1.5mm]{};
            \node[PathNode](4)at(4,2)[circle,draw=Col4,fill=Col4!20,thick,inner sep=0pt,minimum size=1.5mm]{};
            \node[PathNode](5)at(5,1){};
            \node[PathNode](6)at(6,0){};
            \node[PathNode](7)at(7,1){};
            \node[PathNode](8)at(8,0){};
            \draw[PathStep](0)--(1);
            \draw[PathStep](1)--(2);
            \draw[PathStep](2)--(3)[color=Col4,very thick];
            \draw[PathStep](3)--(4)[color=Col4,very thick];
            \draw[PathStep](4)--(5);
            \draw[PathStep](5)--(6);
            \draw[PathStep](6)--(7);
            \draw[PathStep](7)--(8);
        \end{tikzpicture}}
\end{equation}

\item Suppose that $u_i = 1$ and $v_i = 2$. This case is very similar to the previous case, by changing the height of the starting point $0$ of $x$, $y$ and $z$ by $1$.
\end{itemize}

In all cases, one has $b \Leq_{\DexterOrder} d$.
\end{proof}

Let us describe the join and the meet between two triwords $u$ and $v$.
\smallbreak

Let $u,v \in \SetTriword(n)$, and let $r := \mathrm{max}(u_1, v_1) \dots\mathrm{max}(u_n, v_n)$. Since $u_1$ and $v_1$ are both none $2$, $r_1 \ne 2$. Besides, if $r_i = 0$ for $i\in [n]$, then necessarily $u_i$ and $v_i$ have to be equal to $0$. In this case, for all $j>i$, neither $u_j$ nor $v_j$ can take the value $1$. Therefore, if there is an index $i\in[n]$ such that $r_i = 0$, then $r_j \ne 1$ for all $j>i$. Thus $r$ is a triword.
\smallbreak

The triword $r$ is the join between $u$ and $v$. Indeed, $r$ is by definition the smallest element such that for all $i\in[n]$, $r_i \geq u_i$ and $r_i \geq v_i$. Moreover, since the join between $u$ and $v$ is unique, by Proposition~\ref{prop:isomorphism}, the Hochschild poset is a join-semilattice. One can conclude that Hochschild poset is a lattice since there is a unique minimal triword~\cite{Sta11}\footnote{This fact is already known since the Hochschild poset is an interval of the dexter meet-semilattice~\cite{Cha20}.}.
\smallbreak

Let $s := \mathrm{min}(u_1, v_1) \dots \mathrm{min}(u_n, v_n)$.
The word $s$ is not necessarily a triword. For instance, if we consider $u = 11112$ and $v = 10022$, two triwords of size $5$, then $s = 10012$ which contains a subword $01$. 
\smallbreak

Let $t := u \Meet v$ be the word obtained from $s$ by changing all subwords $01$ by $00$ in $s$.
\smallbreak

\begin{Proposition}\label{prop:meetdefinition-skyscrapers}
Let $n \geq 1$ and $u,v \in \SetTriword(n)$, then $t := u \Meet v$ is the meet between $u$ and~$v$.
\end{Proposition}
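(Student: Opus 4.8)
The plan is to verify directly the three defining properties of a meet: that $t$ is a triword, that $t$ is a common lower bound of $u$ and $v$, and that it dominates every common lower bound. Since the Hochschild poset has already been shown to be a lattice, establishing these three facts identifies $t$ with the unique meet $u \Meet v$, so no appeal to an abstract construction is needed beyond existence.

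First I would pin down the explicit shape of $t$. Let $p$ be the position of the leftmost $0$ occurring in $s$ (if $s$ has no $0$, then $s$ already avoids the subword $01$, so $s = t$ is a triword and there is nothing further to check). Every letter of $s$ strictly before $p$ lies in $\{1,2\}$, so replacing each subword $01$ by $00$ amounts precisely to turning into $0$ every letter $1$ situated at a position $j > p$. Hence $t_i = s_i$ for $i \le p$, while for $i > p$ one has $t_i \in \{0,2\}$ with $t_i \le s_i$. In particular $t_1 = s_1 \ne 2$ (recall $u_1, v_1 \ne 2$ gives $s_1 \ne 2$), and $t$ decomposes as a block of letters in $\{1,2\}$, followed by a $0$, followed by a block of letters in $\{0,2\}$; such a word carries no subword $01$, so $t \in \SetTriword(n)$. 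Moreover $t_i \le s_i = \mathrm{min}(u_i, v_i)$ for every $i$, which yields $t \Leq u$ and $t \Leq v$, so $t$ is a common lower bound.

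It remains to show $t$ is the greatest common lower bound, and this is the step I expect to be the crux. Let $w \in \SetTriword(n)$ satisfy $w \Leq u$ and $w \Leq v$; then $w_i \le \mathrm{min}(u_i, v_i) = s_i$ for all $i$. At any position where $t_i = s_i$ this already gives $w_i \le t_i$, so it suffices to handle the positions $j > p$ with $s_j = 1$ and $t_j = 0$, the only places where $t$ and $s$ differ. Here the triword structure of $w$ is decisive: from $w_p \le s_p = 0$ we get $w_p = 0$, and because $w$ contains no subword $01$, no letter of $w$ after position $p$ can equal $1$. At such a position $j$ we then have $w_j \le s_j = 1$ together with $w_j \ne 1$, forcing $w_j = 0 = t_j$. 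Thus $w_i \le t_i$ for all $i$, i.e. $w \Leq t$, and $t = u \Meet v$. The delicate point, and the only place where the hypothesis that $w$ is an actual triword (rather than merely componentwise below $s$) is exploited, is exactly this forcing of $w_j = 0$: it rests on the forced $0$ at the leftmost zero-position of $s$ combined with the no-$01$ condition on $w$.
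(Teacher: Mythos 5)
Your proof is correct and follows essentially the same route as the paper: describe $t$ explicitly via the leftmost $0$ of $s$, check it is a triword and a common lower bound, then use the no-$01$ condition on an arbitrary lower bound $w$ to force $w_j=0$ at the positions where $t$ differs from $s$. In fact your treatment of the maximality step is more detailed than the paper's, which essentially asserts that $t$ is the greatest lower bound after verifying it is a triword.
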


\begin{proof}
If $s := \mathrm{min}(u_1, v_1) \dots \mathrm{min}(u_n, v_n)$ is a triword, then $t = s$. Suppose that $s$ is not a triword.
Since we replace in $s$ all subwords $01$ by $00$, $t$ is a triword. Moreover, if there is a subword $01$ in $s$, then either $u$ or $v$ has a letter $0$ following by letters $0$ or $2$. Necessary, the word $s$ inherits this letter $0$, and then $t$ is a triword if all letters after this letter $0$ are $0$ or $2$. 
Therefore, the triword $t$ is the greatest element such that
$t \leq u$ and $t \leq v$.
\end{proof}
\smallbreak

For example, in order to compute $11112 \Meet 10222$, first we compute $s = 10112$, which is not a triword.  We replace the subword $s_2s_3$ and $s_2s_4$ by the subword $00$. One has $11112 \Meet 10222 = 10002$. 
\smallbreak

\section{Geometric properties}\label{sec:geom-prop}
Through triwords, it is possible to give a cubic realization of the Hochschild lattice by placing in the space $\mathbb{R}^n$ all triwords of size $n$. This lattice thus joins the family of posets having a cubic realisation~\cite{Com19, CG20}. 
This realization allows us to show two geometrical results: on the one hand that the Hochschild lattice is EL-shellable and on the other hand that this lattice is constructible by interval doubling.

\subsection{Cubic realizations}
The Hochschild poset $\SetTriword(n)$ can be seen as a geometric object in the space $\R^n$ by placing for each $u \in
\SetTriword(n)$ a vertex of coordinates $\Par{u_1,\dots, u_n}$, and by forming for each $u, v \in \SetTriword(n)$ such that $u
\Covered v$ an edge between $u$ and $v$. We call \Def{cubic realization} of $\SetTriword(n)$ the geometric object $\CubicReal\Par{\SetTriword(n)}$ just defined. Figure~\ref{fig:examples_triword_posets} shows the cubic realization of the poset $\SetTriword(2)$ and the poset $\SetTriword(3)$.
\smallbreak

\begin{figure}[ht]
    \centering
    \subfloat[][$\CubicReal\Par{\SetTriword(2)}$.]{
    \centering
    \scalebox{1}{
    \begin{tikzpicture}[Centering,xscale=1.1,yscale=1.1,rotate=-135]
        \draw[Grid](0,0)grid(1,2);
        %
        %
        \node[NodeGraph](00)[]at(0,0){};
        \node[NodeGraph](10)[]at(1,0){};
        \node[NodeGraph](11)[]at(1,1){};
        \node[NodeGraph](02)[]at(0,2){};
        \node[NodeGraph](12)[]at(1,2){};
        \node[NodeLabelGraph,above of=00]{$00$};
        \node[NodeLabelGraph,left of=10]{$10$};
        \node[NodeLabelGraph,left of=11]{$11$};
        \node[NodeLabelGraph,right of=02]{$02$};
        \node[NodeLabelGraph,below of=12]{$12$};
        \draw[EdgeGraph](00)--(10);
        \draw[EdgeGraph](00)--(02);
        \draw[EdgeGraph](10)--(11);
        \draw[EdgeGraph](11)--(12);
        \draw[EdgeGraph](02)--(12);
    \end{tikzpicture}}
    \label{subfig:triword_poset_2}}
    \hspace{2cm}
    \subfloat[][$\CubicReal\Par{\SetTriword(3)}$.]{
    \centering
    \scalebox{1}{
    \begin{tikzpicture}[Centering,xscale=1.3,yscale=1.3,
        y={(0,-.5cm)}, x={(-1.0cm,-1.0cm)}, z={(1.0cm,-1.0cm)}]
        \DrawGridSpace{1}{2}{2}
        %
        %
        \node[NodeGraph](000)[]at(0,0,0){};
        \node[NodeGraph](020)[]at(0,2,0){};
        \node[NodeGraph](100)[]at(1,0,0){};
        \node[NodeGraph](002)[]at(0,0,2){};
        \node[NodeGraph](022)[]at(0,2,2){};
        \node[NodeGraph](110)[]at(1,1,0){};
        \node[NodeGraph](111)[]at(1,1,1){};
        \node[NodeGraph](020)[]at(0,2,0){};
        \node[NodeGraph](120)[]at(1,2,0){};
        \node[NodeGraph](121)[]at(1,2,1){};
        \node[NodeGraph](122)[]at(1,2,2){};
        \node[NodeGraph](112)[]at(1,1,2){};
        \node[NodeGraph](102)[]at(1,0,2){};
        \node[NodeLabelGraph,above of=000]{$000$};
        \node[NodeLabelGraph,left of=100]{$100$};
        \node[NodeLabelGraph,right of=002]{$002$};
        \node[NodeLabelGraph,right of=022]{$022$};
        \node[NodeLabelGraph,left of=110]{$110$};
        \node[NodeLabelGraph,left of=111]{$111$};
        \node[NodeLabelGraph,below of=020]{$020$};
        \node[NodeLabelGraph,left of=120]{$120$};
        \node[NodeLabelGraph,left of=121]{$121$};
        \node[NodeLabelGraph,below of=122]{$122$};
        \node[NodeLabelGraph,right of=112]{$112$};
        \node[NodeLabelGraph,above of=102]{$102$};
        \draw[EdgeGraph](000)--(020);
        \draw[EdgeGraph](000)--(100);
        \draw[EdgeGraph](000)--(002);
        \draw[EdgeGraph](100)--(110);
        \draw[EdgeGraph](100)--(102);
        \draw[EdgeGraph](002)--(102);
        \draw[EdgeGraph](002)--(022);
        \draw[EdgeGraph](020)--(120);
        \draw[EdgeGraph](020)--(022);
        \draw[EdgeGraph](110)--(120);
        \draw[EdgeGraph](110)--(111);
        \draw[EdgeGraph](111)--(121);
        \draw[EdgeGraph](111)--(112);
        \draw[EdgeGraph](120)--(121);
        \draw[EdgeGraph](121)--(122);
        \draw[EdgeGraph](102)--(112);
        \draw[EdgeGraph](112)--(122);
        \draw[EdgeGraph](022)--(122);
    \end{tikzpicture}}
    \label{subfig:triword_poset_3}}
  \caption{\footnotesize Cubic realizations of some Hochschild posets.}
    \label{fig:examples_triword_posets}
\end{figure}
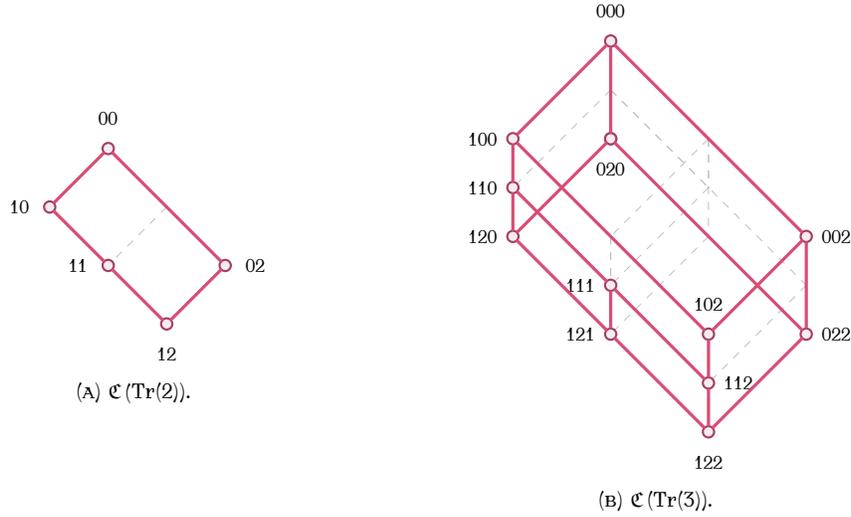

The first thought that comes to mind, is that for any $n \geq 1$, any $k$-face of the realization $\CubicReal\Par{\SetTriword(n)}$ is contained in a $n-1$-face of the hypercube of dimension $n$, for $k \in [0, n-1]$. 
Indeed, between the minimal triword $0^n := u$ and the maximal triword $12^{n-1} := v$, there is no triword $w$ of size $n$ such that $u_i < w_i < v_i$ for all $i \in [n]$ since $u_1 = 0$ and $v_1 = 1$. 
\smallbreak

Therefore, we can see this realization as one empty cell of dimension $n$. Thus, it is clear that the volume of $\CubicReal\Par{\SetTriword(n)}$ is $2^{n-1}$.
\smallbreak

\subsection{EL-shellability}
In~\cite{BW96} and~\cite{BW97}, Björner and Wachs
generalized the method of labellings of
the cover relations of graded posets to the case of non-graded posets. In particular, they
showed the EL-shellability of the Tamari poset~\cite{BW97}. In this section, we show that the Hochschild lattice is EL-shellable.
\smallbreak 

A poset $\PosetP$ is \Def{bounded} if it has a unique maximal element and a unique minimal element for $\Leq_\PosetP$. 
A \Def{chain} in $\PosetP$ is a sequence of elements
\begin{equation}\label{equ:chainEL}
x_1 \Leq_\PosetP x_2 \Leq_\PosetP \dots \Leq_\PosetP x_{r-1} \Leq_\PosetP x_r.
\end{equation}
Let $\Covered_\PosetP$ be the covering relation of $\PosetP$. If $x_i \Covered_\PosetP x_{i+1}$ for all $i \in [r-1]$, then the chain~\eqref{equ:chainEL} is \Def{saturated}.
\smallbreak

By a slight abuse of notation, the set of elements $(x,y)$ such that $x \Covered_\PosetP y$ is also denoted by $\Covered_\PosetP$.
Let $\PosetP$ be a bounded poset and $\Lambda$ be a poset, and $\lambda : \Covered_{\PosetP} \to \Lambda$ be a map. For any saturated chain $\Par{x^{(1)}, \dots, x^{(k)}}$ of
$\PosetP$, we set
\begin{equation}
    \lambda\Par{x^{(1)}, \dots, x^{(k)}}
    :=
    \Par{\lambda\Par{x^{(1)}, x^{(2)}}, \dots,
    \lambda\Par{x^{(k - 1)}, x^{(k)}}}.
\end{equation}
We say that a saturated chain of $\PosetP$ is \Def{$\lambda$-increasing}
(resp. \Def{$\lambda$-weakly decreasing}) if its image by $\lambda$ is an
increasing (resp. weakly decreasing) word for the order relation
$\Leq_{\Lambda}$. We say also that a saturated chain $\Par{x^{(1)},
\dots, x^{(k)}}$ of $\PosetP$ is \Def{$\lambda$-smaller} than a
saturated chain $\Par{y^{(1)}, \dots, y^{(k)}}$ of $\PosetP$ if $\lambda\Par{x^{(1)}, \dots, x^{(k)}}$ is smaller than $\lambda\Par{y^{(1)}, \dots, y^{(k)}}$ for the
lexicographic order induced by $\Leq_{\Lambda}$. The map $\lambda$ is called \Def{EL-labeling} (edge lexicographic labeling) of $\PosetP$ if for any $x, y \in \PosetP$ satisfying $x \Leq_\PosetP y$, there is exactly one $\lambda$-increasing saturated chain from $x$ to~$y$, and this chain is $\lambda$-minimal among all saturated chains from $x$ to $y$. Any bounded poset that admits an EL-labeling is \Def{EL-shellable} (see~\cite{BW96,BW97}).
\smallbreak

The EL-shellability of a poset $\PosetP$ implies several topological and
order theoretical properties of the associated order complex
$\Delta(\PosetP)$ built from $\PosetP$. Recall that the faces of this
simplicial complex are all the chains of $\PosetP$. For instance, if $\PosetP$ has at most one
$\lambda$-weakly decreasing chain between any pair of elements then the Möbius function of $\PosetP$ takes values in $\{-1, 0, 1\}$. In this case, the simplicial complex associated with each open interval of $\PosetP$ is either contractible or has the homotopy type of
a sphere~\cite{BW97}.
\smallbreak

In order to show the EL-shellability of $\SetTriword(n)$ for $n \geq 1$, we set $\Lambda$ as the poset $\Z^2$ ordered lexicographically. Then we introduce the map
\begin{math}
    \lambda :
    \Covered \to \Z^2
\end{math}
defined for any $u,v$ such that $u \Covered v$ by
\begin{equation} \label{equ:el_labelling}
    \lambda (u, v) := \Par{i, u_i}
\end{equation}
where $i$ is the unique index such that $u_i \ne v_i$.  Observe that because of the covering relation $\Covered$ defined in Proposition~\ref{prop:coveredHochschild}, the image by $\lambda$ of any
saturated chain in $\SetTriword(n)$ is well-defined.
\smallbreak

For any $u, v \in \SetTriword(n)$, let
\begin{equation}
    \DiffIndexes(u, v) := \Bra{d ~:~ u_d \ne v_d}
\end{equation}
be the set of all indices of different letters between $u$ and $v$.
\smallbreak

\begin{Theorem}\label{shellability}
For any $n \geq 1$, the map $\lambda$ is an EL-labelling of the Hochschild lattice $\SetTriword(n)$. Moreover, there is at most one $\lambda$-weakly decreasing chain between any pair of comparable elements of $\SetTriword(n)$.
\end{Theorem}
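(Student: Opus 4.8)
The plan is to reduce the whole statement to a local analysis of the covering relation $\Covered$ described in Proposition~\ref{prop:coveredHochschild}, carried out inside an arbitrary interval $\Han{x,y}$ with $x \Leq y$. First I would record what a saturated chain in $\Han{x,y}$ can look like. Since every element $z$ occurring in such a chain satisfies $x \Leq z \Leq y$, a coordinate $m \notin \DiffIndexes(x,y)$ never changes, and each step strictly raises one coordinate belonging to $\DiffIndexes(x,y)$. The key local fact I would isolate is that, from a fixed triword $z$, raising a chosen coordinate $m$ by a single cover is \emph{forced}: from value $0$ the cover is $0 \to 1$ when no letter $0$ occurs strictly to the left of $m$, and is the jump $0 \to 2$ otherwise (inserting a $1$ would create the forbidden factor $01$, so no triword lies strictly in between); from value $1$ the cover is $1 \to 2$, possible only when $m>1$ (never needed at $m=1$ since $y_1 \ne 2$). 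Thus a cover is completely determined by its position together with the current word, a fact I would reuse throughout.

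Next I would treat existence and uniqueness of the $\lambda$-increasing chain through the hybrid words $h^{(j)} := y_1 \cdots y_{j-1}\, x_j \cdots x_n$. The central lemma, and the place I expect the real work to lie, is that each $h^{(j)}$, and each word obtained from it by replacing $x_j$ by an intermediate value $c$ with $x_j \le c \le y_j$ along the forced local path, is again a triword. Checking the $01$-avoidance splits into cases by the positions $p<q$ of a hypothetical factor $01$; the only non-immediate case is $p < j \le q$, where the $0$ comes from the $y$-prefix and the $1$ from the $x$-suffix. Here $x_q = 1$ forces $x_p \ne 0$ (no $0$ may precede a $1$ in $x$), while $x_p \le y_p = 0$ forces $x_p = 0$, a contradiction; so this case cannot occur. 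Granting this lemma, I build the chain by processing the positions of $\DiffIndexes(x,y)$ from left to right, raising each fully (via $0\to1\to2$ or $0\to2$, as dictated locally) before moving on. Its label word is strictly lexicographically increasing, since the first label-coordinate increases when passing to a new position and the old value increases within a position. For uniqueness I would argue conversely: a $\lambda$-increasing chain has weakly increasing first label-coordinates, hence treats positions in increasing order with all steps at a given position consecutive; just before position $j$ is treated the current word is forced to be $h^{(j)}$, so by the local fact the within-coordinate path is forced, and the whole increasing chain is thereby unique.

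For lexicographic minimality I would observe that this increasing chain coincides with the greedy chain that, at each word, raises the smallest non-saturated coordinate by a cover; by the local fact this step carries the globally smallest available label $\Par{\min \DiffIndexes(\cdot,y),\, x_{\min \DiffIndexes(\cdot,y)}}$. A short induction on the length of $\Han{x,y}$ then finishes: any saturated chain whose first label strictly exceeds this value is already lex-larger, while any chain realizing this smallest first label must pass through the same successor $z^{(1)}$ (the cover realizing a given label from a fixed word is unique), whence the claim follows from the inductive hypothesis applied to $\Han{z^{(1)},y}$.

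Finally, for the weakly decreasing chains I would show that at most one exists between comparable $x \Leq y$. In a $\lambda$-weakly decreasing chain the first label-coordinates are weakly decreasing, so steps sharing a position are consecutive; but two consecutive covers at one position have strictly increasing old values ($0$ then $1$), contradicting weak decrease. Hence each position of $\DiffIndexes(x,y)$ is modified exactly once, the positions are treated in strictly decreasing order, and each is raised from $x_i$ to $y_i$ in a single cover. This pins down the entire sequence of words with no remaining choice, so there is at most one such chain. The main obstacle is the triword-stability lemma for the hybrids and their intermediates, namely the interaction of the componentwise order with $01$-avoidance in the case $p<j\le q$ above; once that is established, every other step is a routine bookkeeping of the forced local covers.
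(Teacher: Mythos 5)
Your proposal is correct and follows essentially the same route as the paper: it builds the unique $\lambda$-increasing chain through the prefix-replacement (hybrid) words, with the same key lemma that these words remain triwords because $x_p \leq y_p = 0$ forces $x_p = 0$ and hence excludes a later $1$ in $x$, and it handles the weakly decreasing chains by showing each index of $\DiffIndexes(x,y)$ occurs exactly once in strictly decreasing order. Your treatment is somewhat more explicit than the paper's on two points — the local uniqueness of the cover raising a given coordinate, and the verification that the increasing chain is $\lambda$-minimal (which the paper's proof leaves implicit) — but these are refinements of the same argument rather than a different approach.
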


\begin{proof}
Let $u, v \in\SetTriword(n)$ such that $u \Leq v$ and
\begin{equation}
D(u, v) := \{ d_{1},d_{2},\dots,d_{s}\},
\end{equation}
with $d_{1}< d_2 < \dots < d_{s}$. For $k \in [s]$, let $u^{(k)}$ be the word of size $n$ defined by replacing the $k$ letters $u_{d_1}, u_{d_2}, \dots , u_{d_k}$ in $u$ by the $k$ letters $v_{d_1}, v_{d_2}, \dots , v_{d_k}$ of $v$.

Thus, for any $k \in [s]$, either $u_i^{(k)} = u_i$ or $u_i^{(k)} = v_i$ for all $i \in [n]$. Since the letters are increased from the triword $u$ from left to right, the word $u^{(k)}$ is not a triword if and only if there is a letter $u_i^{(k)} = 0$ and a letter $u_j^{(k)} = 1$ with $i \leq d_k$ and $j > i$. However, if there is a letter $u_i^{(k)} = 0$ in $u^{(k)}$ with $i \leq d_k$, then $v_i = 0$ since $u_i^{(k)} = v_i$ by construction of $u^{(k)}$. And so $u_i = 0$ since by hypothesis $u_i \leq v_i$. Thus, $u_i = 0$ and $v_i = 0$ imply respectively that $u_j \ne 1$ and $v_j \ne 1$ in the triwords $u$ and $v$ for all $j > i$. In particular, one has $u^{(k)}_j \neq 1$ for all $j > i$. 
It follows that the subword $01$ cannot occur in $u^{(k)}$, and then $u^{(k)}$ is a triword.
Let us consider the chain 
\begin{equation}
\Par{u \Leq u^{(1)} \Leq u^{(2)} \Leq \dots \Leq u^{(s-1)} \Leq u^{(s)} = v}
\end{equation}
which is not necessarily saturated. Then, by concatenating the unique saturated chain in each interval $[u^{(k-1)}, u^{(k)}]$ for all $k \in [s]$, we obtain a saturated chain between $u$ and $v$. Since each word $u^{(k)}$ of this saturated chain is obtained from $u$ by replacing letters from left to right, this chain is clearly weakly increasing for the partial order $\Leq$. 
Furthermore, between two consecutive triwords $u^{(k-1)}$ and $u^{(k)}$ in this saturated chain, $u^{(k-1)} \Covered u^{(k)}$. Therefore, the image of the chain by $\lambda$ is increasing for $\Leq$. Thus this chain is $\lambda$-increasing. 
\smallbreak

Moreover, since between any two consecutive triwords of this chain only one letter is different, if we consider another saturated chain from $u$ to $v$, then at some point, this chain passes through a word obtained by increasing a letter which has not the smallest possible index. It lead us to choose later in this chain the letter with a smallest index to increase it. For this reason, the saturated chain obtained is not $\lambda$-increasing.
\smallbreak

If a $\lambda$-weakly decreasing chain exists in $[u,v]$, then it must have the sequence of edge-labels 
\begin{equation}
\Par{(d_s, u_{d_s}), (d_{s-1}, u_{d_{s-1}}), ~\dots~, (d_2, u_{d_2}), (d_1, u_{d_1})}.
\end{equation}
Indeed, suppose that between $u$ and $v$, there is an index $d \in \DiffIndexes(u, v)$ such that $u_d = 0$ and $v_d = 2$, and there is a triword $w$ such that $u \Leq w \Leq v$ with $w_d = 1$. Then, for this index $d$, the sequence of edge-labels passing through $w$ is $\Par{(d, 0), (d, 1)}$, and so the saturated chain passing through $w$ in $[u,v]$ cannot be $\lambda$-weakly decreasing. Therefore, to obtain a $\lambda$-weakly decreasing chain in $[u,v]$, each index $d$ of $\DiffIndexes(u, v)$ can only appear once in the sequence of edge-labels.
\smallbreak

Assume that there is a $\lambda$-weakly decreasing chain. For the same reason as previously, this chain is unique.
\end{proof}

For instance, for $\SetTriword(3)$, the $\lambda$-increasing chain between $000$ and $122$ is 
\begin{equation}
\Par{000, 100, 110, 120, 121, 122},
\end{equation}
and
\begin{equation}
\lambda\Par{000, \dots, 122}
    =
    \Par{(1, 0), (2, 0), (2, 1), (3, 0), (3, 1)}.
\end{equation}
\smallbreak

For the same interval, the $\lambda$-weakly decreasing chain is
\begin{equation}
\Par{000, 002, 022, 122},
\end{equation}
and 
\begin{equation}
\lambda\Par{000, \dots, 122}
    =
    \Par{(3, 0), (2, 0), (1, 0)}.
\end{equation}

\subsection{Construction by interval doubling}

Let $\IntervalTwo$ be the poset $\{0, 1\}$ where $0 \Leq 1$.  Let
$\PosetP$ be a poset and $I$ one of its intervals. The
\Def{interval doubling} of $I$ in $\PosetP$ is the poset
\begin{equation}
    \PosetP[I] := (\PosetP \setminus I) \cup (I \times \IntervalTwo),
\end{equation}
having $\Leq'_{\PosetP}$ as order relation, which is defined as follows. For any
$x, y \in \PosetP[I]$, one has $x \Leq'_{\PosetP} y$ if one of the following
assertions is satisfied:
\begin{enumerate}[label=(\roman*)]
    \item $x \in \PosetP \setminus I$, $y \in \PosetP \setminus I$, and
    $x \Leq_{\PosetP} y$,
    \item $x \in \PosetP \setminus I$, $y = \Par{y', b} \in I \times
    \IntervalTwo$, and $x \Leq_{\PosetP} y'$,
    \item $x = \Par{x', a} \in I \times \IntervalTwo$, $y \in \PosetP
    \setminus I$, and $x' \Leq_{\PosetP} y$,
    \item $x = \Par{x', a} \in I \times \IntervalTwo$, $y = \Par{y', b}
    \in I \times \IntervalTwo$, and $x' \Leq_{\PosetP} y'$ and $a \Leq_{\PosetP} b$.
\end{enumerate}
This operation has been introduced by Alan Day as an operation on posets preserving the property of being lattices. A lattice $\LatticeL$
is \Def{constructible by interval doubling} (bounded in the original article) if $\LatticeL$ is isomorphic as a poset to a poset
obtained by performing a sequence of interval doubling from the singleton lattice. 
\smallbreak

For all $n \geq 1$, let us build $\SetTriword(n+1)$ from $\SetTriword(n)$ by following these three steps.
\begin{enumerate}[label=(\roman*)]
\item\label{step1doubling} Let $T_0(n+1)$ be the poset on the set of all words $u0$ such that $u \in \SetTriword(n)$.
\item \label{step2doubling} We build the set $T_2(n+1)$ from $T_0(n+1)$ by changing for all $u \in T_0(n+1)$ the letter $u_{n+1}$ to $2$.
Let $T_{0,2}(n+1)$ be the union $T_0(n+1) \cup T_2(n+1)$. 
\item \label{step3doubling} Let $I_0$ be the set of words of shape $1(1+2)^*0$. We build the set $I_1$ from $I_0$ by changing for all $u \in I_0$ the letter $0$ to $1$. Let $T(n+1)$ be the union $T_{0,2}(n+1) \cup I_1$.
\end{enumerate}

\begin{Lemma}\label{lem:doubling}
For any $n \geq 1$, the Hochschild poset $\SetTriword(n+1)$ is the poset $(T(n+1), \Leq)$ built from $\SetTriword(n)$.
\end{Lemma}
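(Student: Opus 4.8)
The plan is to prove the set equality $T(n+1) = \SetTriword(n+1)$; since both sides carry the componentwise order $\Leq$ on words of $\{0,1,2\}^{n+1}$, this equality of underlying sets immediately upgrades to an equality of posets. The key observation is that the three pieces $T_0(n+1)$, $T_2(n+1)$ and $I_1$ of the construction are exactly the triwords of size $n+1$ whose last letter is $0$, $2$ and $1$ respectively, and that these three classes partition $\SetTriword(n+1)$ by last letter.

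First I would treat the last letters $0$ and $2$. If $u \in \SetTriword(n)$, then appending $0$ or $2$ can neither create a subword $01$ (the new letter is never a $1$) nor alter the first letter, so $u0, u2 \in \SetTriword(n+1)$; conversely, if $w \in \SetTriword(n+1)$ ends in $0$ or $2$, deleting its last letter yields a word of size $n$ whose first letter is still not $2$ and which still avoids $01$, hence lies in $\SetTriword(n)$. This gives $T_0(n+1) = \{w \in \SetTriword(n+1) : w_{n+1}=0\}$ and $T_2(n+1) = \{w \in \SetTriword(n+1): w_{n+1}=2\}$, so that $T_{0,2}(n+1)$ is precisely the set of triwords of size $n+1$ ending in $0$ or $2$.

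Next I would identify $I_1$ with the triwords ending in $1$, which is where condition~\ref{subword01} does the work. If $w \in \SetTriword(n+1)$ with $w_{n+1}=1$, then no letter $w_i$ with $i \leq n$ can equal $0$ (otherwise $w$ would contain the subword $01$ formed with the final letter), and $w_1 \neq 2$ forces $w_1 = 1$; hence $w \in 1(1+2)^{n-1}1$. Conversely every such word starts with $1$ and avoids $01$, so it is a triword. Since $I_1$ was defined precisely as the words $1(1+2)^{n-1}1$, we obtain $I_1 = \{w \in \SetTriword(n+1): w_{n+1}=1\}$. As $I_1$ is disjoint from $T_{0,2}(n+1)$ and the three classes exhaust $\SetTriword(n+1)$, it follows that $T(n+1) = T_{0,2}(n+1) \cup I_1 = \SetTriword(n+1)$.

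Finally, to justify that this construction genuinely realizes $\SetTriword(n+1)$ by interval doublings, I would verify that the two operations are legitimate doublings compatible with $\Leq$. Step~\ref{step2doubling} doubles the whole bounded poset $\SetTriword(n) \cong T_0(n+1)$, yielding $T_{0,2}(n+1) \cong \SetTriword(n) \times \IntervalTwo$ via $u0 \mapsto (u,0)$ and $u2 \mapsto (u,1)$, which matches the componentwise order because $0 < 2$ in the last coordinate. For step~\ref{step3doubling} I would first check that $I_0$ is the interval $[11^{n-1}0,\, 12^{n-1}0]$ of $T_{0,2}(n+1)$, so that Day's doubling is well defined, and then confirm that the doubling order $\Leq'$ it produces coincides with the componentwise order on $T(n+1)$. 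The main obstacle lies exactly here: one must compare, case by case, the relations between the inserted copy $I_1$ (last letter $1$) and the elements of $T_2(n+1)$ (last letter $2$) and of $T_0(n+1)\setminus I_0$ (last letter $0$), and confirm that Day's rules (ii)--(iv) reproduce the comparisons dictated by $0<1<2$ in the final coordinate. Each such case reduces to comparing the first $n$ coordinates together with the elementary ordering of the last letter, so the verification is routine once organized; it is, however, the only point where the equality of the two \emph{order relations}, rather than merely of the two underlying sets, has to be argued.
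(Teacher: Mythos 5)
Your argument is correct and follows essentially the same route as the paper: both prove the set equality $T(n+1)=\SetTriword(n+1)$ by partitioning triwords of size $n+1$ according to their last letter and using condition~\ref{subword01} to show that a triword ending in $1$ must lie in $1(1+2)^{*}1$, the order statement being automatic since both posets carry the componentwise order. Your final paragraph on verifying that the two steps are genuine Day doublings is not needed for this lemma --- that verification is precisely the content of the proof of Theorem~\ref{thm:bounded_lattice_triword}, which the paper handles separately.
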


\begin{proof}
Let $u \in T(n+1)$, $u$ is written either $v0$, or $v2$ with $v \in \SetTriword$, or is a word of form $1(1+2)^*1$. It is clear that, for any $v \in \SetTriword(n)$, adding a letter $0$ or a letter $2$ at the end of $v$ give a triword of size $n+1$. Likewise, a word of form $1(1+2)^*1$ is also a triword.
\smallbreak

Now, let $u \in \SetTriword(n+1)$. Suppose that $u_{n+1} = 1$. Since the subword $01$ is forbidden, one has $u_i \in \{1,2\}$ for all $i \in [n]$. Therefore, $u$ belongs to $T(n+1)$. Suppose that $u_{n+1} = 0$ or that $u_{n+1} = 2$. Since $u$ belongs to $\SetTriword(n+1)$, the conditions of triwords remain on the prefix $v$ of size $n$ of $u$. Thus, one has $v \in \SetTriword(n)$.
\end{proof}

\begin{Theorem} \label{thm:bounded_lattice_triword}
For any $n \geq 1$, the Hochschild poset $\SetTriword(n)$ is constructible by interval doubling.
\end{Theorem}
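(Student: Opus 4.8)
The plan is to argue by induction on $n$, using Lemma~\ref{lem:doubling} to realize $\SetTriword(n+1)$ from $\SetTriword(n)$ as the outcome of exactly two interval doublings. The base case $n=1$ is immediate: $\SetTriword(1)=\{0,1\}$ with $0\Leq 1$ is isomorphic to $\IntervalTwo$, which is obtained from the singleton lattice by doubling its unique (whole) interval. So I would assume $\SetTriword(n)$ is constructible by interval doubling; since the class of such lattices is closed under further doublings, it suffices to exhibit $\SetTriword(n+1)$ as a sequence of doublings applied to $\SetTriword(n)$.

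\emph{First doubling (adding the letters $2$).} By step~\ref{step1doubling} of Lemma~\ref{lem:doubling}, the map $u\mapsto u0$ is an isomorphism from $\SetTriword(n)$ onto $T_0(n+1)$, so $T_0(n+1)$ inherits constructibility. I claim that $T_{0,2}(n+1)$ is the doubling of the whole interval of $T_0(n+1)$, i.e. $T_{0,2}(n+1)\cong T_0(n+1)[T_0(n+1)]=T_0(n+1)\times\IntervalTwo$. Indeed, sending $u0\mapsto(u0,0)$ and $u2\mapsto(u0,1)$ is a bijection onto $T_0(n+1)\times\IntervalTwo$, and for $u,w\in\SetTriword(n)$ one has $u0\Leq w2\iff u\Leq w$ while $u2\Leq w0$ never holds (since $2\not\leq 0$ in the last letter), so the induced order is exactly the product order. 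Hence $T_{0,2}(n+1)$ is constructible by interval doubling.

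\emph{Second doubling (inserting the letters $1$).} Here I would double the interval $I_0=\Han{1^n0,\,12^{n-1}0}$ of $T_{0,2}(n+1)$, which one checks equals the set of step~\ref{step3doubling}: any $x$ with $1^n0\Leq x\Leq 12^{n-1}0$ has last letter $0$ (so $x\in T_0(n+1)$) and prefix in $1(1+2)^{n-1}$, i.e. $x$ has shape $1(1+2)^*0$. Define $\phi\colon T_{0,2}(n+1)[I_0]\to\SetTriword(n+1)$ by fixing $T_{0,2}(n+1)\setminus I_0$, sending $(u0,0)\mapsto u0$, and sending $(u0,1)\mapsto u1$; its image is $T_{0,2}(n+1)\cup I_1=T(n+1)$, so $\phi$ is a bijection by Lemma~\ref{lem:doubling}. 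It then remains to check that $\phi$ preserves order, which reduces to matching the four defining cases of the doubling order $\Leq'$ against the componentwise order on $\SetTriword(n+1)$. The routine cases concern pairs within $T_{0,2}(n+1)$ or within $I_0\times\IntervalTwo$, where the last letter ($0$, $1$, or $2$) makes the two orders agree verbatim.

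The main obstacle, and the only part requiring care, is the crossing relations between a newly inserted element $u1\in I_1$ and an element outside the doubled interval, namely some $w2\in T_2(n+1)$ or some $w0\in T_0(n+1)\setminus I_0$. For these I would verify that $u1\Leq w2\iff u\Leq w$ (matching case (iii) of $\Leq'$, since $1\leq 2$), that $w2\not\Leq u1$ in both posets (since $2\not\leq 1$), and that the relations $u1\Leq w0$ and $u0\Leq w0$ with $w0\notin I_0$ are vacuous: if $u\Leq w$ with $u=1(1+2)^{n-1}$ then $w$ has no $0$ and $w_1=1$, forcing $w0\in I_0$, a contradiction. Once these crossings are confirmed, $\phi$ is an isomorphism of posets, so $\SetTriword(n+1)\cong T_{0,2}(n+1)[I_0]$ is obtained from $\SetTriword(n)$ by two interval doublings, completing the induction.
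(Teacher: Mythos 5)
Your proof is correct and follows essentially the same route as the paper: induction on $n$, with $\SetTriword(n+1)$ obtained from $\SetTriword(n)\simeq T_0(n+1)$ by first doubling the whole interval $T_0(n+1)$ to get $T_{0,2}(n+1)$ and then doubling the interval $I_0=\Han{1^n0,\,12^{n-1}0}$, exactly as in steps (ii) and (iii) of Lemma~\ref{lem:doubling}. Your verification of the crossing relations between $I_1$ and the rest of $T_{0,2}(n+1)$ is in fact somewhat more explicit than the paper's, which only checks that each new element sits between its two copies.
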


\begin{proof}
We proceed by induction on $n \geq 1$. 
If $n = 1$, we have the poset $\IntervalTwo$, namely the poset with two elements, which is a lattice constructible by interval doubling. 
Assume now that $n \geq 2$. We have to show that $\SetTriword(n+1)$ can be obtained from $\SetTriword(n)$ by a sequence of interval doublings.
By Lemma~\ref{lem:doubling}, one has that $\SetTriword(n+1)$ is the poset $T(n+1)$. Since $T(n+1)$ is obtain from $\SetTriword(n)$ by performing the three steps~\ref{step1doubling},~\ref{step2doubling}, and~\ref{step3doubling}, by showing that these two last steps are two operations of interval doubling, the intended result will follow.
\smallbreak

Let us consider $T_0(n+1)$. By changing for all $u \in T_0(n+1)$ the last letter $0$ to $2$, a copy $T_2(n+1)$ of $T_0(n+1)$ is obtained. Since any $u \in T_0(n+1)$ have a copy $v \in T_2(n+1)$ such that $u_i = v_i$ for all $i \in [n]$ and $u_{n+1} \leq v_{n+1}$, one has that $u \Leq v$. Therefore, the step~\ref{step2doubling} is the doubling of the interval $T_0(n+1)$.
\smallbreak

In the step~\ref{step3doubling} one builds $I_1$ from $I_0$ by changing for all $u \in I_0$ the letter $0$ to $1$. Since for all $u, v \in I_0$ such that $u \Leq v$, any word $w$ such that $u \Leq w \Leq v$ is by definition of $\Leq$ a word of shape $1(1+2)^*0$, one has that $I_0$ is the interval $[1^n0, 12^{n-1}0]$. For the same reason, $I_1$ is the interval $[1^{n+1}, 12^{n-1}1]$. 
\smallbreak

Since any $u \in I_0$ has a copy $v \in I_1$ such that $u_i = v_i$ for all $i \in [n]$ and $u_{n+1} \leq v_{n+1}$, one has that $u \Leq v$. Meanwhile, any $u \in I_0$ has a copy $w \in T_2(n+1)$, included in the interval $[1^{n}2, 12^{n}]$, such that $u_i = w_i$ for all $i \in [n]$ and $u_{n+1} \leq w_{n+1}$. However, by construction, one has $u_{n+1} = 0$, $v_{n+1} = 1$, and $w_{n+1} = 2$, for all $u \in I_0$, $v \in I_1$ and $w \in [1^{n}2, 12^{n}]$. It follows that $u \Leq v \Leq w$ for all $u \in I_0$, $v \in I_1$ and $w \in [1^{n}2, 12^{n}]$ such that $u_i = v_i = w_i$ for $i \in [n]$. Therefore, the step~\ref{step3doubling} is the doubling of the interval $I_0$.
\end{proof}

Note that for $n = 0$, $\SetTriword(0) = \{\epsilon\}$ is constructible by interval doubling. Note also that, for any $n \geq 1$, only two steps are necessary to built $\SetTriword(n+1)$ from $\SetTriword(n)$, by starting with the doubling of $T_0(n+1)$ built from $\SetTriword(n)$,
\begin{equation}
\SetTriword(n) \simeq T_0(n+1) \rightarrow T_0(n+1) \times 2 \rightarrow \SetTriword(n+1).
\end{equation}

\begin{figure}[ht]
    \centering
    \begin{multline*}
    	\scalebox{.75}{
        \begin{tikzpicture}[Centering,xscale=0.8,yscale=0.8,rotate=-135]
        \draw[Grid](0,0)grid(1,2);
    	\node[NodeGraph](00)[]at(0,0){};
        \node[NodeGraph](10)[]at(1,0){};
        \node[NodeGraph](11)[]at(1,1){};
        \node[NodeGraph](02)[]at(0,2){};
        \node[NodeGraph](12)[]at(1,2){};
        \node[NodeLabelGraph,above of=00]{$00$};
        \node[NodeLabelGraph,left of=10]{$10$};
        \node[NodeLabelGraph,left of=11]{$11$};
        \node[NodeLabelGraph,right of=02]{$02$};
        \node[NodeLabelGraph,below of=12]{$12$};
        \draw[EdgeGraph](00)--(10);
        \draw[EdgeGraph](00)--(02);
        \draw[EdgeGraph](10)--(11);
        \draw[EdgeGraph](11)--(12);
        \draw[EdgeGraph](02)--(12);
    \end{tikzpicture}}
        \; \simeq \;
        \scalebox{.75}{
        \begin{tikzpicture}[Centering,xscale=1.1,yscale=1.1,
        y={(0,-.5cm)}, x={(-1.0cm,-1.0cm)}, z={(1.0cm,-1.0cm)}]
        \DrawGridSpace{1}{2}{2}
        %
        %
        \node[NodeGraph](000)[]at(0,0,0){};
        \node[NodeGraph](020)[]at(0,2,0){};
        \node[NodeGraph](120)[]at(1,2,0){};
        \node[NodeGraph](110)[]at(1,1,0){};
        \node[NodeGraph](100)[]at(1,0,0){};
        \node[NodeLabelGraph,above of=000]{$000$};
        \node[NodeLabelGraph,below of=020]{$020$};
        \node[NodeLabelGraph,left of=120]{$120$};
        \node[NodeLabelGraph,left of=110]{$110$};
        \node[NodeLabelGraph,left of=100]{$100$};
        \draw[EdgeGraph](000)--(100);
        \draw[EdgeGraph](000)--(020);
        \draw[EdgeGraph](100)--(110);
        \draw[EdgeGraph](110)--(120);
        \draw[EdgeGraph](020)--(120);
        \end{tikzpicture}}
         \; \to \;
        \scalebox{.75}{
        \begin{tikzpicture}[Centering,xscale=1.1,yscale=1.1,
        y={(0,-.5cm)}, x={(-1.0cm,-1.0cm)}, z={(1.0cm,-1.0cm)}]
        \DrawGridSpace{1}{2}{2}
        %
        %
        \node[NodeGraph](000)[]at(0,0,0){};
        \node[NodeGraph](020)[]at(0,2,0){};
        \node[NodeGraph](100)[]at(1,0,0){};
        \node[NodeGraph](002)[]at(0,0,2){};
        \node[NodeGraph](022)[]at(0,2,2){};
        \node[NodeGraph](110)[]at(1,1,0){};
        \node[NodeGraph](120)[]at(1,2,0){};
        \node[NodeGraph](122)[]at(1,2,2){};
        \node[NodeGraph](112)[]at(1,1,2){};
        \node[NodeGraph](102)[]at(1,0,2){};
        \node[NodeLabelGraph,above of=000]{$000$};
        \node[NodeLabelGraph,left of=100]{$100$};
        \node[NodeLabelGraph,right of=002]{$002$};
        \node[NodeLabelGraph,right of=022]{$022$};
        \node[NodeLabelGraph,left of=110]{$110$};
        \node[NodeLabelGraph,below of=020]{$020$};
        \node[NodeLabelGraph,left of=120]{$120$};
        \node[NodeLabelGraph,below of=122]{$122$};
        \node[NodeLabelGraph,above right of=112]{$112$};
        \node[NodeLabelGraph,above of=102]{$102$};
        \draw[EdgeGraph](000)--(020);
        \draw[EdgeGraph](000)--(100);
        \draw[EdgeGraph](000)--(002);
        \draw[EdgeGraph](100)--(110);
        \draw[EdgeGraph](100)--(102);
        \draw[EdgeGraph](002)--(102);
        \draw[EdgeGraph](002)--(022);
        \draw[EdgeGraph](020)--(120);
        \draw[EdgeGraph](020)--(022);
        \draw[EdgeGraph](110)--(120);
        \draw[EdgeGraph](110)--(112);
        \draw[EdgeGraph](120)--(122);
        \draw[EdgeGraph](102)--(112);
        \draw[EdgeGraph](112)--(122);
        \draw[EdgeGraph](022)--(122);
        \end{tikzpicture}}
         \; \to \;
        \scalebox{0.75}{
        \begin{tikzpicture}[Centering,xscale=1.1,yscale=1.1,
        y={(0,-.5cm)}, x={(-1.0cm,-1.0cm)}, z={(1.0cm,-1.0cm)}]
        \DrawGridSpace{1}{2}{2}
        %
        %
        \node[NodeGraph](000)[]at(0,0,0){};
        \node[NodeGraph](020)[]at(0,2,0){};
        \node[NodeGraph](100)[]at(1,0,0){};
        \node[NodeGraph](002)[]at(0,0,2){};
        \node[NodeGraph](022)[]at(0,2,2){};
        \node[NodeGraph](110)[]at(1,1,0){};
        \node[NodeGraph](111)[]at(1,1,1){};
        \node[NodeGraph](020)[]at(0,2,0){};
        \node[NodeGraph](120)[]at(1,2,0){};
        \node[NodeGraph](121)[]at(1,2,1){};
        \node[NodeGraph](122)[]at(1,2,2){};
        \node[NodeGraph](112)[]at(1,1,2){};
        \node[NodeGraph](102)[]at(1,0,2){};
        \node[NodeLabelGraph,above of=000]{$000$};
        \node[NodeLabelGraph,left of=100]{$100$};
        \node[NodeLabelGraph,right of=002]{$002$};
        \node[NodeLabelGraph,right of=022]{$022$};
        \node[NodeLabelGraph,left of=110]{$110$};
        \node[NodeLabelGraph,left of=111]{$111$};
        \node[NodeLabelGraph,below of=020]{$020$};
        \node[NodeLabelGraph,left of=120]{$120$};
        \node[NodeLabelGraph,left of=121]{$121$};
        \node[NodeLabelGraph,below of=122]{$122$};
        \node[NodeLabelGraph,above right of=112]{$112$};
        \node[NodeLabelGraph,above of=102]{$102$};
        \draw[EdgeGraph](000)--(020);
        \draw[EdgeGraph](000)--(100);
        \draw[EdgeGraph](000)--(002);
        \draw[EdgeGraph](100)--(110);
        \draw[EdgeGraph](100)--(102);
        \draw[EdgeGraph](002)--(102);
        \draw[EdgeGraph](002)--(022);
        \draw[EdgeGraph](020)--(120);
        \draw[EdgeGraph](020)--(022);
        \draw[EdgeGraph](110)--(120);
        \draw[EdgeGraph](110)--(111);
        \draw[EdgeGraph](111)--(121);
        \draw[EdgeGraph](111)--(112);
        \draw[EdgeGraph](120)--(121);
        \draw[EdgeGraph](121)--(122);
        \draw[EdgeGraph](102)--(112);
        \draw[EdgeGraph](112)--(122);
        \draw[EdgeGraph](022)--(122);
        \end{tikzpicture}}
         \end{multline*}
    \caption{\footnotesize A sequence of interval doublings from $\SetTriword(2)$ to $\SetTriword(3)$.}
    \label{fig:doubling_hoch}
\end{figure}

For instance, Figure~\ref{fig:doubling_hoch} depicts the sequence of interval doublings from $\SetTriword(2)$ to $\SetTriword(3)$. To obtain $\SetTriword(3)$ from $T_0(3)$, we have first to double the interval $T_0(3)$, then we have to double the interval $[110, 120]$.
\smallbreak

\section{Combinatorial properties}\label{sec:combi-prop}
In this section, several combinatorial and enumerative properties of the Hochschild lattice are proved. We obtain results such as the enumeration of intervals, the enumeration of $k$-chains, and the description of the degree polynomial of the Hochschild lattice.

\subsection{Irreducible elements and maximal chains}
Here we give some general properties of the Hochschild lattice, such as its degree polynomial and a description of join-irreducible and meet-irreducible elements.
\smallbreak

Recall that an element $x$ of a lattice $\LatticeL$ is
\Def{join-irreducible} (resp. \Def{meet-irreducible}) if $x$ covers
(resp. is covered by) exactly one element in $\LatticeL$. We denote by $\JoinIrreducible(\LatticeL)$ (resp. $\MeetIrreducible(\LatticeL)$) the set of join-irreducible (resp. meet-irreducible) elements of $\LatticeL$. 
Moreover, let
\begin{equation}\label{eq:chainex}
x_1 \Covered_\LatticeL x_2 \Covered_\LatticeL \dots \Covered_\LatticeL x_{r-1} \Covered_\LatticeL x_r
\end{equation}
be a saturated chain of $\LatticeL$ where $\Covered_\LatticeL$ is the covering relation of $\LatticeL$.
The~\Def{length} of the saturated chain~\eqref{eq:chainex} is $r-1$\footnote{In Section~\ref{subsec:interval}, we deal with $k$-chains, where $k$ refers not to the length of the chain but to the number of elements forming that chain.}. A longest saturated chain between the minimal element and the maximal element of $\LatticeL$ is a \Def{maximal saturated chain}.
\smallbreak

Let us describe the set of join-irreducible and meet-irreducible elements of $\SetTriword(n)$ by using the regular expression notation~\cite{Sak09} recalled in Section~\ref{subsec:Hoch}.
\smallbreak

The two possibilities of having a join-irreducible triword are either to change a letter $u_i= 1$ to $0$ such that all letters on the left of $u_i$ are letters $1$ and letters on the right of $u_i$ are $0$, or to change a letter $u_i= 2$ to $0$ such that all other letters are $0$. Indeed, suppose that we change in a triword $u$ a letter $u_i = 2$ to $1$. Since $u$ should cover just one triword, all other letters in $u$ have to be $0$. However, since the first letter in $u$ is different from $2$, there is a letter $u_{i-1}$ such that $u_{i-1} \neq 0$. Thus, $u_{i-1}$ can be also decreased.
This implies that $u$ covers more than just one triword. 
Since the subword $01$ is not allowed, the set of triwords which covers a unique triword is described by
\begin{equation}\label{equ:joinirre}
\JoinIrreducible(\SetTriword(n)) = \{ u \in \SetTriword(n) ~:~ u \in 1^+ 0^* + 0^+ 2 0^* \}.
\end{equation}
Likewise, the three possibilities of having a meet-irreducible triword are either to change a letter $1$ to $2$ or to change a letter $0$ to $1$, or to change a letter $0$ to $2$. Moreover, for all cases, the other letters which are unchanged should be as large as possible.
Thus, the set of triwords covered by a unique triword is described by
\begin{equation}\label{equ:meetirre}
\MeetIrreducible(\SetTriword(n)) = \{ u \in \SetTriword(n) ~:~ u \in 12^* 12^* + 12^*02^* + 02^* \}.
\end{equation}
\smallbreak

Note that both regular expressions~\eqref{equ:joinirre} and~\eqref{equ:meetirre} have as generating function
\begin{equation}\label{equ:genefunctirretriword}
G_{\JoinIrreducible(\SetTriword)}(z) = G_{\MeetIrreducible(\SetTriword)}(z) = \frac{z + z^2}{(1 - z)^2}.
\end{equation}
From~\eqref{equ:genefunctirretriword}, one can deduce that, for $n \geq 1$,
\begin{equation}\label{equ:nbirretriword}
\# \JoinIrreducible(\SetTriword(n)) = \# \MeetIrreducible(\SetTriword(n)) = 2n - 1.
\end{equation}
\smallbreak

Recall that a lattice $\LatticeL$ is \Def{join-semidistributive} if for all $x, y, z \in \LatticeL$, $x \JJoin y = x \JJoin z$ implies $x \JJoin y = x \JJoin (y \Meet z)$.
Likewise, a lattice $\LatticeL$ is \Def{meet-semidistributive} if for all $x, y, z \in \LatticeL$, $x \Meet y = x \Meet z$ implies $x \Meet y = x \Meet (y \JJoin z)$. A lattice $\LatticeL$ is \Def{semidistributive} if $\LatticeL$ is both join-semidistributive and meet-semidistributive.  A lattice $\LatticeL$ is \Def{distributive} if $x \Meet (y \JJoin z) = (x \Meet y) \JJoin (x \Meet z)$ (or in an equivalent way $x \JJoin (y \Meet z) = (x \JJoin y) \Meet (x \JJoin z)$).
\smallbreak

In Section~\ref{sec:geom-prop}, we have shown that the Hochschild lattice is constructible by interval doubling. However, it is known from~\cite{Day79} that lattices constructible by interval doubling are in particular semidistributive.
Moreover, a finite lattice $\LatticeL$ is constructible by interval doubling if and only if it is congruence uniform~\cite{Day79}. In particular, the number of join-irreducible elements $\JoinIrreducible(\LatticeL)$ is equal to the number of doubling steps needed to build $\LatticeL$~\cite{Muh19}.
\smallbreak

Therefore, there are two consequences of Theorem~\ref{thm:bounded_lattice_triword}. The first one is that for any $n \geq 1$, the Hochschild poset $\SetTriword(n)$ is semidistributive. 
Another consequence is that the difference of numbers of join-irreducible elements between $\SetTriword(n-1)$ and $\SetTriword(n)$ is always $2$. Indeed, $\SetTriword(n)$ is constructible by interval doubling from $\SetTriword(n-1)$ with only two steps.
\smallbreak

\begin{Lemma}\label{lem:lengthmaxichain}
For any $n \geq 1$, the length of any maximal saturated chain in the Hochschild poset $\SetTriword(n)$ is $2n - 1$. Moreover, a triword belongs to a maximal saturated chain if and only if all letters following a letter $0$ are also $0$.
\end{Lemma}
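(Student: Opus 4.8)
The plan is to track maximal chains with a simple height statistic and then to match the longest chains with the stated combinatorial shape. First I would introduce the height $\sigma(u):=u_1+\dots+u_n$. By Proposition~\ref{prop:coveredHochschild} every cover $u\Covered v$ changes a single letter, raising it from $0$ to $1$ or from $1$ to $2$ (so $\sigma$ increases by $1$), or from $0$ to $2$ (so $\sigma$ increases by $2$). Since $\sigma(0^n)=0$ and $\sigma(12^{n-1})=2n-1$, a saturated chain from the minimum to the maximum with $a$ steps of type $+1$ and $b$ steps of type $+2$ satisfies $a+2b=2n-1$, hence has length $a+b=(2n-1)-b\le 2n-1$, with equality exactly when $b=0$, i.e.\ when every step raises a letter by one. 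To see that $2n-1$ is attained I would exhibit the chain that first raises each letter from $0$ to $1$ from left to right, reaching $1^n$, and then raises the letters in positions $2,\dots,n$ from $1$ to $2$; this uses $n+(n-1)=2n-1$ unit steps through triwords. Thus the maximal (longest) saturated chains are precisely the chains from $0^n$ to $12^{n-1}$ all of whose steps are $+1$, and they have length $2n-1$, which is the first assertion.

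For the characterization, call a triword \emph{admissible} if every letter following a $0$ is again $0$; such a triword has the shape $w=w_1\cdots w_m\,0^{\,n-m}$ with $w_1\cdots w_m\in 1(1+2)^*$ (or $w=0^n$). To show an admissible $w$ lies on a maximal chain, I would build a unit-step chain from $0^n$ to $12^{n-1}$ through $w$ in two halves. Below $w$: raise positions $1,\dots,m$ from $0$ to $1$ from left to right, then raise to $2$ those positions $i\le m$ with $w_i=2$; every intermediate word has the form (nonzero letters)(zeros), so the subword $01$ never appears. Above $w$: process the trailing block of zeros from left to right, raising each such position through $0\to 1\to 2$ (legal, since at each moment all earlier letters are already nonzero), and finally raise any remaining $1$ in positions $2,\dots,m$ to $2$; again each intermediate keeps the shape (nonzeros)(zeros). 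Concatenating gives a unit-step, hence maximal, chain containing $w$.

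For the converse I would argue by contraposition: if $w$ is not admissible it has a letter $w_i=0$ followed by a nonzero letter, which must be $w_j=2$ with $j>i$, since a $0$ cannot be followed by a $1$. Suppose $w$ lay on a maximal chain; restricting to the part below $w$ gives a unit-step chain from $0^n$ to $w$. Along it the $j$-th coordinate rises from $0$ to $2$ by unit steps, so it must occupy the value $1$ at some word $x$. As $x$ is a triword with $x_j=1$, no earlier letter can be $0$, whence $x_i\ge 1$; but coordinates only increase along the chain, so $w_i\ge x_i\ge 1$, contradicting $w_i=0$. Hence no maximal chain passes through $w$. The crux of the argument, and the step I expect to be the main obstacle to state cleanly, is exactly this interaction: forcing unit steps makes coordinate $j$ pass through $1$, and the ban on the subword $01$ then forbids coordinate $i$ from ever being $0$ in $w$.
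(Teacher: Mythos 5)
Your proof is correct and follows essentially the same route as the paper's: maximal chains are exactly the unit-step chains (so each letter other than the first must pass through $1$ on its way to $2$), and the ban on the subword $01$ then forces every position preceding such a $1$ to be nonzero. Your write-up is in fact tighter than the paper's own proof — the height statistic $\sigma$ and the explicit contrapositive for the ``only if'' direction make precise steps that the paper only sketches.
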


\begin{proof}
If $n = 1$, then the length of the saturated chain $[0,1]$ is $1$. 
Suppose that $n >1$. 
Since all letters $0$, except the first one, can be increased to $1$, then to $2$, the length of a maximal saturated chain in $\SetTriword(n)$ between $0^n$ and $12^{n-1}$ is at most $2n - 1$.
Therefore, to obtain a maximal saturated chain between $0^n$ and $12^{n-1}$, all letters $0$ in $0^n$ must become $1$ before becoming $2$, except for the first $0$. Considering that, the letters have to be increased from left to right, in order to avoid the forbidden subword $01$. This way, each letter of $0^n$, except the first one, contributes $2$ in the length of the saturated chain between the minimal triword and the maximal triword. Since the first $0$ contributes $1$, the length of such a saturated chain is $2n - 1$.
\smallbreak

Furthermore, since the letters have to be increased from left to right, this implies that a triword $u$ belongs to a maximal saturated chain if and only if for any letter $u_i = 0$ then $u_j = 0$ for all $j \geq i$.
\end{proof}

Let $\LatticeL$ be a lattice such that the length of a maximal saturated chain is $k$. If $\# \JoinIrreducible(\LatticeL) = \# \MeetIrreducible(\LatticeL) = k$ then $\LatticeL$ is an \Def{extremal lattice}~\cite{Mar92}. 
By Lemma~\ref{lem:lengthmaxichain} and the generating function~\eqref{equ:genefunctirretriword}, one has the following result.
%

\begin{Proposition}
For any $n \geq 1$, the Hochschild lattice $\SetTriword(n)$ is extremal.
\end{Proposition}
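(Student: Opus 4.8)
The plan is to read off extremality directly from its definition, since both of its ingredients have already been established in the excerpt. Recall that a finite lattice $\LatticeL$ is extremal precisely when the length $k$ of a maximal saturated chain equals both $\# \JoinIrreducible(\LatticeL)$ and $\# \MeetIrreducible(\LatticeL)$. Thus the entire task reduces to matching three integers attached to $\SetTriword(n)$, and no new combinatorics is required.

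First I would invoke Lemma~\ref{lem:lengthmaxichain}, which asserts that \emph{every} maximal saturated chain of $\SetTriword(n)$ has length $2n-1$. This phrasing is exactly what is needed: it both guarantees that the maximal chain length is well-defined (all such chains share a common length) and fixes $k = 2n-1$. Next I would recall the enumeration of the irreducible elements: starting from the generating function~\eqref{equ:genefunctirretriword}, equation~\eqref{equ:nbirretriword} gives $\# \JoinIrreducible(\SetTriword(n)) = \# \MeetIrreducible(\SetTriword(n)) = 2n-1$.

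Putting the two computations side by side yields the chain of equalities $k = \# \JoinIrreducible(\SetTriword(n)) = \# \MeetIrreducible(\SetTriword(n)) = 2n-1$, which is precisely the defining condition for an extremal lattice; the proposition then follows immediately for all $n \geq 1$.

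There is no genuine obstacle here: the substance lies entirely in the two prior results, and the proposition is essentially the observation that the integer $2n-1$ arising from the chain-length argument coincides with the integer $2n-1$ arising from counting join- and meet-irreducibles. The only subtlety worth flagging is that extremality demands a \emph{single} value for the maximal chain length, so it matters that Lemma~\ref{lem:lengthmaxichain} controls \emph{any} maximal saturated chain rather than merely exhibiting one of length $2n-1$.
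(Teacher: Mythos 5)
Your proof is correct and follows exactly the paper's route: the paper derives the proposition in one line from Lemma~\ref{lem:lengthmaxichain} (maximal chain length $2n-1$) together with the count $\# \JoinIrreducible(\SetTriword(n)) = \# \MeetIrreducible(\SetTriword(n)) = 2n-1$ obtained from the generating function~\eqref{equ:genefunctirretriword}. Your closing remark about needing a single well-defined maximal chain length is a reasonable point of care, though in the paper's conventions a maximal saturated chain is by definition a longest one, so the value $k$ is automatically unambiguous.
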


Let us recall two definitions.
An element $x$ of a lattice $\LatticeL$ is \Def{left modular}~\cite{BS97} if for any $y \Leq_{\LatticeL} z$,
\begin{equation}
(y \JJoin x) \Meet z = y \JJoin (x \Meet z).
\end{equation}
A lattice is \Def{left modular} if there is a maximal saturated chain of left modular elements.
A lattice is \Def{trim}~\cite{Tho06} if it is an extremal left modular lattice.
\smallbreak

It is shown in~\cite{TW19} that if a lattice is extremal and semidistributive, then it is also left modular, and therefore trim. By Theorem~\ref{thm:bounded_lattice_triword}, one has that $\SetTriword(n)$ is semidistributive, thus $\SetTriword(n)$ is trim.
\smallbreak

Let $\LatticeL$ be an extremal lattice. The union of maximal saturated chains of $\LatticeL$ is known as the \Def{spine} of $\LatticeL$. It is known from~\cite{Tho06} that the spine of an extremal lattice is a distributive sublattice of $\LatticeL$. The spine of $\LatticeL$ is denoted by $\Spine(\LatticeL)$. Figure~\ref{fig:examples_spines_triword_posets} shows the spine of $\Spine(\SetTriword(2))$ and $\Spine(\SetTriword(3))$. 
\smallbreak

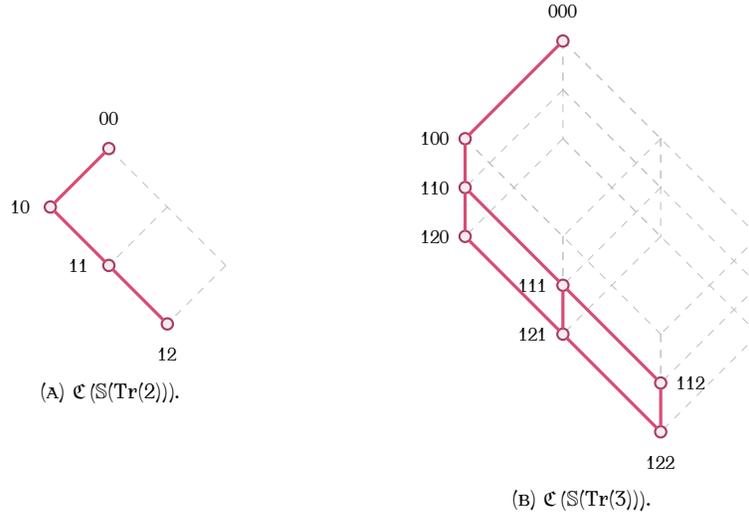
\begin{figure}[ht]
    \centering
    \subfloat[][$\CubicReal\Par{\Spine(\SetTriword(2))}$.]{
    \centering
    \scalebox{1}{
    \begin{tikzpicture}[Centering,xscale=1.1,yscale=1.1,rotate=-135]
        \draw[Grid](0,0)grid(1,2);
        %
        %
        \node[NodeGraph](00)[]at(0,0){};
        \node[NodeGraph](10)[]at(1,0){};
        \node[NodeGraph](11)[]at(1,1){};
        \node[NodeGraph](12)[]at(1,2){};
        \node[NodeLabelGraph,above of=00]{$00$};
        \node[NodeLabelGraph,left of=10]{$10$};
        \node[NodeLabelGraph,left of=11]{$11$};
        \node[NodeLabelGraph,below of=12]{$12$};
        \draw[EdgeGraph](00)--(10);
        \draw[EdgeGraph](10)--(11);
        \draw[EdgeGraph](11)--(12);
    \end{tikzpicture}}
    \label{subfig:triword_spine_poset_2}}
    \hspace{2cm}
    \subfloat[][$\CubicReal\Par{\Spine(\SetTriword(3))}$.]{
    \centering
    \scalebox{1}{
    \begin{tikzpicture}[Centering,xscale=1.3,yscale=1.3,
        y={(0,-.5cm)}, x={(-1.0cm,-1.0cm)}, z={(1.0cm,-1.0cm)}]
        \DrawGridSpace{1}{2}{2}
        %
        %
        \node[NodeGraph](000)[]at(0,0,0){};
        \node[NodeGraph](100)[]at(1,0,0){};
        \node[NodeGraph](110)[]at(1,1,0){};
        \node[NodeGraph](111)[]at(1,1,1){};
        \node[NodeGraph](120)[]at(1,2,0){};
        \node[NodeGraph](121)[]at(1,2,1){};
        \node[NodeGraph](122)[]at(1,2,2){};
        \node[NodeGraph](112)[]at(1,1,2){};
        \node[NodeLabelGraph,above of=000]{$000$};
        \node[NodeLabelGraph,left of=100]{$100$};
        \node[NodeLabelGraph,left of=110]{$110$};
        \node[NodeLabelGraph,left of=111]{$111$};
        \node[NodeLabelGraph,left of=120]{$120$};
        \node[NodeLabelGraph,left of=121]{$121$};
        \node[NodeLabelGraph,below of=122]{$122$};
        \node[NodeLabelGraph,right of=112]{$112$};
        \draw[EdgeGraph](000)--(100);
        \draw[EdgeGraph](100)--(110);
        \draw[EdgeGraph](110)--(120);
        \draw[EdgeGraph](110)--(111);
        \draw[EdgeGraph](111)--(121);
        \draw[EdgeGraph](111)--(112);
        \draw[EdgeGraph](120)--(121);
        \draw[EdgeGraph](121)--(122);
        \draw[EdgeGraph](112)--(122);
    \end{tikzpicture}}
    \label{subfig:triword_spine_poset_3}}
  \caption{\footnotesize Cubic realizations of some spines of Hochschild posets.}
    \label{fig:examples_spines_triword_posets}
\end{figure}

Let $\PosetP$ be a poset. Recall that an \Def{order ideal} in $\PosetP$ is a subset $\mathcal{S}$ of $\PosetP$ such that if $x \in \mathcal{S}$ and $y \Leq_\PosetP x$ then $y \in \mathcal{S}$.
The Fundamental theorem for finite distributive
lattices\footnote{FTFDL for short.} due to Birkhoff states that any finite distributive lattice $\LatticeL$ is isomorphic
to the lattice $\JLattice(\PosetP)$ of the order ideals of the subposet $\PosetP$ of $\LatticeL$ restricted to its join-irreducible elements, ordered by inclusion~\cite{Bir37, Sta11}.
\smallbreak

Let us consider the subposet $\JoinIrreducible(\Spine(\SetTriword(n)))$ of $\Spine(\SetTriword(n))$.
Since the spine of $\SetTriword(n)$ is a distributive sublattice of $\SetTriword(n)$, then by the FTFDL one has that $\Spine(\SetTriword(n))$ is isomorphic to $\JLattice\Par{\JoinIrreducible(\Spine(\SetTriword(n)))}$.
\smallbreak

For instance, Figure~\ref{fig:example-jj-on-spine} depicts the construction of $\JLattice\Par{\JoinIrreducible(\Spine(\SetTriword(3)))}$, which is a distributive lattice isomorphic to $\Spine(\SetTriword(3))$ (see Figure~\ref{fig:examples_spines_triword_posets}).
\smallbreak

\begin{figure}[ht]
    \centering
    \subfloat[][$\JoinIrreducible(\Spine(\SetTriword(3)))$.]{
    \centering
    \scalebox{1}{
    \begin{tikzpicture}[Centering,xscale=1.1,yscale=1.1]
        %
        \node[NodeGraph](100)[]at(-1,0){};
        \node[NodeGraph](110)[]at(0,-1){};
        \node[NodeGraph](120)[]at(-1,-2){};
        \node[NodeGraph](111)[]at(1,-2){};
        \node[NodeGraph](112)[]at(0,-3){};
        \node[NodeLabelGraph,right of=100]{$100$};
        \node[NodeLabelGraph,right of=110]{$110$};
        \node[NodeLabelGraph,right of=120]{$120$};
        \node[NodeLabelGraph,right of=111]{$111$};
        \node[NodeLabelGraph,right of=112]{$112$};
        \draw[EdgeGraph](100)--(110);
        \draw[EdgeGraph](110)--(120);
        \draw[EdgeGraph](110)--(111);
        \draw[EdgeGraph](111)--(112);
    \end{tikzpicture}}
    \label{subfig:join-irr}}
    \hspace{2cm}
    \subfloat[][$\JLattice\Par{\JoinIrreducible(\Spine(\SetTriword(3)))}$.]{
    \centering
    \scalebox{1}{
    \begin{tikzpicture}[Centering,xscale=1.1,yscale=1.1]
        %
        \node[NodeGraph](0)[]at(0,0){};
        \node[NodeGraph](1)[]at(0,-1){};
        \node[NodeGraph](2)[]at(0,-2){};
        \node[NodeGraph](3)[]at(-1,-3){};
        \node[NodeGraph](4)[]at(1,-3){};
        \node[NodeGraph](5)[]at(0,-4){};
        \node[NodeGraph](6)[]at(2,-4){};
        \node[NodeGraph](7)[]at(1,-5){};
        \node[NodeLabelGraph,left of=0]{$\emptyset$};
        \node[NodeLabelGraph,left of=1]{$\{100\}~~~~$};
        \node[NodeLabelGraph,left of=2]{$\{100, 110\}~~~~~~~~$};
        \node[NodeLabelGraph,left of=3]{$\{100, 110, 120 \}~~~~~~~~~~~~~~$};
        \node[NodeLabelGraph,right of=4]{$~~~~~~~~~~~~~~\{100, 110, 111\}$};
        \node[NodeLabelGraph,below left of=5]{$\{100, 110, 111, 120\}~~~~~~~~~~~~~~~~~~~~~~$};
        \node[NodeLabelGraph,right of=6]{$~~~~~~~~~~~~~~~~~~~~\{100, 110, 111, 112\}$};
        \node[NodeLabelGraph,right of=7]{$~~~~~~~~~~~\JoinIrreducible(\Spine(\SetTriword(3)))$};
        \draw[EdgeGraph](0)--(1);
        \draw[EdgeGraph](1)--(2);
        \draw[EdgeGraph](2)--(3);
        \draw[EdgeGraph](2)--(4);
        \draw[EdgeGraph](3)--(5);
        \draw[EdgeGraph](4)--(5);
        \draw[EdgeGraph](4)--(6);
        \draw[EdgeGraph](5)--(7);
        \draw[EdgeGraph](6)--(7);
    \end{tikzpicture}}
    \label{subfig:jj-on-spine}}
  \caption{\footnotesize Construction of $\JLattice\Par{\JoinIrreducible(\Spine(\SetTriword(3)))}$ from the poset $\JoinIrreducible(\Spine(\SetTriword(3)))$.}
    \label{fig:example-jj-on-spine}
\end{figure}

Our aim is to give a description of triwords belonging to the spine of the Hochschild lattice. Then, in this set, we give a description of join-irreducible triwords.
\smallbreak

By Lemma~\ref{lem:lengthmaxichain} we know that a triword $u$ belongs to a maximal saturated chain if and only if for any letter $u_i = 0$ then $u_j = 0$ for all $j \geq i$. Therefore, the regular expression of these triwords is
\begin{equation}\label{equ:spinetriword}
\Spine(\SetTriword(n)) = \{u \in \SetTriword(n) ~:~ u \in 0^* + 1(1+2)^*0^* \}.
\end{equation}
Therefore, the generating function is
\begin{equation}
G_{\Spine(\SetTriword)} = \frac{1}{1 - 2z},
\end{equation}
and thus
\begin{equation}
\# \Spine(\SetTriword(n)) = 2^n.
\end{equation}
\smallbreak

Let $u \in \Spine(\SetTriword(n))$. The two possibilities for $u$ to be a join-irreducible triword are either to have one unique letter $1$ which can be changed to $0$ or to have one unique letter $2$ which can be changed to $1$.
To summarize, 
\begin{equation}\label{equ:JST}
\JoinIrreducible(\Spine(\SetTriword(n))) = \{ u \in \Spine(\SetTriword(n)) ~:~ u \in 1^+0^* + 1^+20^* \}.
\end{equation}
One can deduce the generating function
\begin{equation}
G_{\JoinIrreducible(\Spine(\SetTriword))} = \frac{z + z^2}{(1-z)^2},
\end{equation}
and thus 
\begin{equation}
\# \JoinIrreducible(\Spine(\SetTriword(n))) = 2n - 1.
\end{equation}
\smallbreak

From~\eqref{equ:JST} one can also deduce that the shape of $\JoinIrreducible(\Spine(\SetTriword(n)))$ is as depicted in Figure~\ref{fig:JST(n)}.

\begin{figure}[ht]
    \centering
    \scalebox{1}{
    \begin{tikzpicture}[Centering,xscale=1.1,yscale=1.1]
        %
        \node[NodeGraph](100)[]at(-1,0){};
        \node[NodeGraph](110)[]at(0,-1){};
        \node[NodeGraph](120)[]at(-1,-2){};
        \node[NodeGraph](111)[]at(1,-2){};
        \node[NodeGraph](112)[]at(0,-3){};
        \node[NodeGraph](1111)[]at(2,-3){};
        \node[NodeGraph](1112)[]at(1,-4){};
        \node[NodeGraph](11111)[]at(3,-4){};
        \node[NodeGraph](11112)[]at(2,-5){};
        \node[NodeGraph](111111)[]at(4,-5){};
        \node[NodeGraph](111112)[]at(3,-6){};
        \node[NodeLabelGraph,right of=100]{~~$10^{n-1}$};
        \node[NodeLabelGraph,right of=110]{~~~~$1^20^{n-2}$};
        \node[NodeLabelGraph,right of=120]{~~$120^{n-2}$};
        \node[NodeLabelGraph,right of=111]{~~~~$1^30^{n-3}$};
        \node[NodeLabelGraph,right of=112]{~~~~$1^{2}20^{n-3}$};
        \node[NodeLabelGraph,right of=1111]{~~~~$1^{n-2}0^2$};
        \node[NodeLabelGraph,right of=1112]{~~~~$1^{n-3}20^2$};
        \node[NodeLabelGraph,right of=11111]{~~$1^{n-1}0$};
        \node[NodeLabelGraph,right of=11112]{~~~~$1^{n-2}20$};
        \node[NodeLabelGraph,right of=111111]{$1^n$};
        \node[NodeLabelGraph,right of=111112]{~~$1^{n-1}2$};
        \draw[EdgeGraph](100)--(110);
        \draw[EdgeGraph](110)--(120);
        \draw[EdgeGraph](110)--(111);
        \draw[EdgeGraph](111)--(112);
        \draw[EdgeGraph](111)--(1111)[dotted];
        \draw[EdgeGraph](1111)--(1112);
        \draw[EdgeGraph](1111)--(11111);
        \draw[EdgeGraph](11111)--(11112);
        \draw[EdgeGraph](11111)--(111111);
        \draw[EdgeGraph](111111)--(111112);
    \end{tikzpicture}}
  \caption{\footnotesize Shape of the poset $\JoinIrreducible(\Spine(\SetTriword(n)))$.}
    \label{fig:JST(n)}
\end{figure}

\subsection{Degree polynomial}

For any poset $\PosetP$, the \Def{degree polynomial} of $\PosetP$ is the
polynomial $\DegreePolynomial_\PosetP(x, y) \in \K \Han{x, y}$ defined
by
\begin{equation}
    \DegreePolynomial_{\PosetP}(x, y)
    :=
    \sum_{u \in \PosetP} x^{\In_\PosetP(u)} \; y^{\Out_\PosetP(u)},
\end{equation}
where for any $u \in \PosetP$, $\In_\PosetP(u)$ (resp.
$\Out_\PosetP(u)$) is the number of elements covered by (resp. covering)
$u$ in $\PosetP$. The specialization $\DegreePolynomial_\PosetP(1, y)$ is the \Def{$h$-polynomial} of~$\PosetP$.
\smallbreak

Besides, for any letter $u_i$ of $u$ with $i \in [n]$, the number of letters $u'_i$ such that the word $u'$ defined by $u'_j := u_j$ for all $j \ne i$ is in covering relation with $u$ is the \Def{degree} of the letter $u_i$. The sum of the degrees of all letters of $u$ is the number of elements covered by $u$ or covering $u$, namely $\In_\PosetP(u) + \Out_\PosetP(u)$.
\smallbreak

\begin{Proposition}\label{h-poly}
For any $n \geq 1$, the $h$-polynomial of $\SetTriword(n)$ is
\begin{equation}
\DegreePolynomial_{\SetTriword(n)}(1, y) = \Par{y + 1}^{n-2} \Par{y^2 + (n+1) y + 1}.
\end{equation}
\end{Proposition}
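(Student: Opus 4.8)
The plan is to reduce the statement to a precise combinatorial description of the up-degree $\Out_{\SetTriword(n)}(u)$ of each triword $u$, after which the $h$-polynomial $\DegreePolynomial_{\SetTriword(n)}(1, y) = \sum_{u \in \SetTriword(n)} y^{\Out_{\SetTriword(n)}(u)}$ is computed by summing over triwords split according to their first letter. The genuinely delicate point, and the one I expect to be the main obstacle, is the cover analysis: by Proposition~\ref{prop:coveredHochschild} a cover of $u$ is obtained by increasing a single letter, but one must distinguish a covering from a mere order relation. In particular, raising a letter $0$ to $2$ yields a cover only when the intermediate word obtained by the change $0 \mapsto 1$ fails to be a triword; otherwise that intermediate word lies strictly between $u$ and the result, so the $0 \mapsto 2$ move is \emph{not} a covering.

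Carrying out this analysis position by position gives the following clean count. Each letter $u_i = 0$ contributes exactly one cover: if $i = 1$, or if no $0$ occurs before position $i$, then $0 \mapsto 1$ produces a triword and hence a cover, while $0 \mapsto 2$ does not give a cover (it is forbidden when $i = 1$, and otherwise factors through the $0 \mapsto 1$ move); if instead some $0$ occurs before position $i$, so that $i \geq 2$, then $0 \mapsto 1$ would create the forbidden factor $01$, and $0 \mapsto 2$ becomes the unique cover obtained at position $i$. Each letter $u_i = 1$ with $i \geq 2$ contributes exactly one cover via $1 \mapsto 2$, while $u_1 = 1$ contributes none (since $u_1 \neq 2$ is required), and each letter equal to $2$ contributes none. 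This yields the key formula
\begin{equation}
\Out_{\SetTriword(n)}(u) = \#\Bra{i : u_i = 0} + \#\Bra{i \geq 2 : u_i = 1}.
\end{equation}

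Next I would sum $y^{\Out(u)}$ over triwords, splitting into those with $u_1 = 0$ and those with $u_1 = 1$ (the case $u_1 = 2$ being excluded). If $u_1 = 0$, then $u$ contains no letter $1$ at all, so $u \in 0\Bra{0,2}^{n-1}$ and $\Out(u)$ equals the number of $0$'s; grouping by the number $k$ of letters $2$ among the last $n - 1$ positions gives $\binom{n-1}{k}$ words of out-degree $n - k$, so this family contributes $\sum_{k} \binom{n-1}{k} y^{n-k} = y(y+1)^{n-1}$. If $u_1 = 1$, write $c$ for the number of letters $2$ in $u$; the key formula gives $\Out(u) = n - 1 - c$, depending only on $c$. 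After choosing the $c$ positions carrying a $2$ among the last $n-1$ slots, the remaining non-$2$ letters must read $1^s 0^t$, so there are $\binom{n-1}{c}(n - c)$ such triwords, and this family contributes $\sum_{c} \binom{n-1}{c}(n - c)\, y^{n-1-c}$.

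Finally I would evaluate the two sums. The first equals $(y+1)^{n-2}\Par{y^2 + y}$ after factoring out $(y+1)^{n-2}$, and the second simplifies, using $c\binom{n-1}{c} = (n-1)\binom{n-2}{c-1}$, to $(y+1)^{n-2}\Par{ny + 1}$. Adding them gives $(y+1)^{n-2}\Par{y^2 + (n+1)y + 1}$, as claimed. These binomial manipulations are routine; the substance of the proof lies entirely in the cover analysis underlying the key formula, and especially in the remark that increasing a $0$ to a $2$ is a covering precisely when the $0 \mapsto 1$ alternative is forbidden.
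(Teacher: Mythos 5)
Your proof is correct, and it reaches the same bottom line via a genuinely more explicit route than the paper. The substance you identify as the crux --- that a letter $0$ at position $i$ yields exactly one cover (via $0\mapsto 1$ if no $0$ precedes it, via $0\mapsto 2$ otherwise, the other option being either forbidden or factoring through an intermediate triword), that a letter $1$ at position $i\geq 2$ yields one cover and the leading $1$ none, and that $2$'s yield none --- is exactly the cover analysis the paper performs, but the paper encodes it implicitly as weights on the productions of the grammar $A=\epsilon+0A+2A$, $B=\epsilon+0A+1B+2B$, $\SetTriword=\epsilon+0A+1B$ (each increasable letter contributing $yz$, each rigid letter $z$), then solves the resulting system of generating series and extracts coefficients from a partial-fraction decomposition. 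You instead make the out-degree formula $\Out_{\SetTriword}(u)=\#\Bra{i: u_i=0}+\#\Bra{i\geq 2: u_i=1}$ explicit and sum directly, classifying triwords by their first letter and their number of $2$'s; your counts $\binom{n-1}{k}$ for $u\in 0\Bra{0,2}^{n-1}$ and $\binom{n-1}{c}(n-c)$ for $u_1=1$ (positions of the $2$'s times the choice of $1^s0^t$ for the remaining letters) are right, and the binomial reductions to $y(y+1)^{n-1}$ and $(y+1)^{n-2}(ny+1)$ check out. What your version buys is a standalone, fully elementary argument that also records the out-degree statistic explicitly (which is of independent interest and is consistent with the paper's Lemma~\ref{n-regulier}); what the paper's version buys is uniformity, since the same weighted-grammar computation delivers the whole family of polynomials at once without case-by-case enumeration.
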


\begin{proof}
Let us compute the generating series
\begin{equation}
P_{\SetTriword} (y,z) := \sum_{n \geq 0} \DegreePolynomial_{\SetTriword(n)}(1, y) z^n
\end{equation}
of all degree polynomials of $\SetTriword(n)$ for all $n\geq 0$.
\smallbreak

Let us consider the grammar of $\SetTriword$ given by Lemma~\ref{grammar}.
By the map $u \mapsto z^{|u|} y^{\Out_\SetTriword(u)}$ one obtains the system of formal series
\begin{equation}
\begin{split}
P_A(y,z) &= 1 + yz P_A(y,z) + z P_A(y,z), \\
P_B(y,z) &= 1 + yz P_A(y,z) + yz P_B(y,z) + z P_B(y,z), \\
P_{\SetTriword}(y,z) &= 1 + yz P_A(y,z) + z P_B(y,z).
\end{split}
\end{equation}
Indeed, in~\eqref{grammar1} of the grammar, $0A$ becomes $yz P_A(y,z)$ because the letter $0$ can always be increased to $2$. Note that the letter $0$ in $0A$ cannot be increased to $1$ because in~\eqref{grammar3}, this expression $0A$ comes after a first letter $0$, and the subword $01$ is prohibited by definition of triwords. However, $2A$ becomes $z P_A(y,z)$ since the letter $2$ cannot be increased. Likewise, in~\eqref{grammar2}, $0A$ becomes $yz P_A (y,z)$ because the letter $0$ can be increased to $1$, and $1B$ becomes $yz P_B(y,z)$ because the letter $1$ can be increased to $2$, unlike the letter $2$ in $2B$ which becomes $z P_B(y,z)$.
\smallbreak

Thus, 
\begin{equation}
\begin{split}
P_A(y,z) &= \frac{1}{1 - z - yz}, \\
P_B(y,z) &= \frac{1-z}{(1 - z - yz)^2}, \\
P_{\SetTriword}(y,z) &= \frac{1 - z}{1 - (z + yz)} + \frac{z-z^2}{\Par{1 - (z + yz)}^2}.
\end{split}
\end{equation}
From this expression of $P_{\SetTriword}(y,z)$ in partial fraction decomposition, we deduce by a straightforward computation the given expression for $\DegreePolynomial_{\SetTriword(n)}(1, y)$.
\end{proof}

\begin{Lemma}\label{n-regulier}
For any $n \geq 1$ and $u \in \SetTriword(n)$, $\In_\SetTriword(u) + \Out_\SetTriword(u) = n$. 
\end{Lemma}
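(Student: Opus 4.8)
The plan is to compute the total vertex degree $\In_\SetTriword(u)+\Out_\SetTriword(u)$ directly, letter by letter, starting from the identity recorded just before the statement: this quantity equals $\sum_{i\in[n]}\deg(u_i)$, where $\deg(u_i)$ is the number of triwords obtained from $u$ by altering only its $i$-th letter and lying in a covering relation with $u$. By Proposition~\ref{prop:coveredHochschild} such an alteration either moves $u_i$ to an adjacent value ($0\to 1$, $1\to 0$, $1\to 2$ or $2\to 1$) or performs the jump $0\to 2$ or $2\to 0$; the jump contributes a cover precisely when the intermediate value $1$ at position $i$ fails to yield a triword, i.e. exactly when setting $u_i=1$ would create a subword $01$. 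So the whole argument reduces to bookkeeping of which of these moves survive.

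First I would fix the shape of $u$. Let $z$ be the position of the leftmost $0$ of $u$, with $z=n+1$ if $u$ has no $0$. The condition forbidding the subword $01$ (condition~\ref{subword01}) forces $u_i\in\{1,2\}$ for $i<z$ and $u_i\in\{0,2\}$ for $i>z$, and $u_1\ne 2$ forces $u_1=1$ whenever $z\geq 2$. The key consequence, used throughout, is that the two events \emph{``some $0$ lies strictly left of $i$''} and \emph{``some $1$ lies strictly right of $i$''} are mutually exclusive, since their conjunction would produce a forbidden $01$.

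Then I would run the case analysis, showing that \emph{every} position has degree $1$ except the letters equal to $1$ inside the head $u_1\cdots u_{z-1}$. A letter $0$ admits only $0\to 2$ (a genuine cover) when some $0$ precedes it and only $0\to 1$ otherwise, hence degree $1$; a letter $2$ admits only $2\to 1$ when no $0$ precedes it, and only $2\to 0$ otherwise — legitimate exactly because no $1$ then follows it — hence again degree $1$; and a head letter $u_i=1$ admits $1\to 2$ iff $i\geq 2$ and $1\to 0$ iff no $1$ lies to its right. I expect the main obstacle to be precisely this casework for the jumps $0\leftrightarrow 2$, namely distinguishing a true cover from a length-two chain through the valid intermediate $1$, together with the two boundary phenomena: position $1$, where $u_1=2$ is forbidden, and the last $1$ of the head.

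Finally I would sum. Let $p_1<\dots<p_k$ be the positions of the $1$'s in the head, so $p_1=1$ when $k\geq 1$, and among them a $1$ at $p_j$ has no $1$ to its right exactly when $j=k$. Thus for $k\geq 2$ the leading $1$ at $p_1$ has degree $0$ (it admits neither $1\to 2$ nor $1\to 0$), the trailing $1$ at $p_k$ has degree $2$, and every intermediate head $1$ has degree $1$; for $k\leq 1$ every head $1$ has degree $1$. In all cases the deficit $-1$ at $p_1$ and the surplus $+1$ at $p_k$ cancel, so the $k$ head $1$'s contribute exactly $k$ in total. Since each of the remaining $n-k$ positions contributes $1$, the grand total is $\Par{n-k}+k=n$, which proves the claim.
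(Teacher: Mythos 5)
Your proof is correct and takes essentially the same route as the paper's: a direct letter-by-letter degree count in which every letter has degree $1$, except that the leading $1$ (degree $0$) and the last $1$ (degree $2$) compensate each other. Your organization via the leftmost $0$ and the head $1$'s is somewhat more systematic than the paper's split on $u_1 \in \{0,1\}$, but the underlying argument is identical.
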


\begin{proof}
Suppose that the first letter of $u$ is $0$, then all letters of $u$ are either $0$ or $2$. The letter $u_1$ can be increased to $1$, but since we cannot have a letter $0$ followed by $1$, all other letters $0$ can only be increased to $2$, and all letters $2$ can only be decreased to $0$. And so for the case where $u_1 = 0$, all letters of $u$ have degree $1$. 
\smallbreak

Suppose now that the first letter of $u$ is $1$. Either $u_1$ is the only letter $1$ in $u$ or there is another letter $u_i = 1$ such that all letters after $u_i$ are not $1$. In the first case, $u_1$ can be decreased to $0$, thus all letters of $u$ have degree $1$. In the second case, since there is at least one other letter $1$ in $u$, $u_1$ cannot be decreased to $0$. Then the degree of $u_1$ is $0$. However, this degree is compensated by the degree of the letter $u_i$. Indeed, the last letter $1$ is the only one which can be decreased to $0$ or increased to $2$. Hence the degree of $u_i$ is $2$, and since all other letters of $u$ have degree $1$, the sum is equal to $n$. 
\end{proof}

By Proposition~\ref{h-poly} and Lemma~\ref{n-regulier}, one can deduce the degree polynomial of $(\SetTriword(n), \Leq~)$ by replacing $y^k$ in the $h$-polynomial by $x^{n-k}y^{k}$, with $k \in [0, n]$. Thus, the degree polynomial of $(\SetTriword(n), \Leq)$ is
\begin{equation}
\DegreePolynomial_{\SetTriword(n)}(x, y) = (x + y )^{n-2} \Par{x^2 + (n+1) x y + y^2}.
\end{equation}
\smallbreak


\subsection{Intervals and $k$-chains}\label{subsec:interval}
This section also provides enumerative results about the Hochschild lattice. We have already computed the length of any maximal chain for this lattice in Section~\ref{sec:geom-prop}. Here we give a method to find formulas for the number of $k$-chains of this lattice.
\smallbreak

We use for a letter $a$ and a word $u$ the notation $a \in u$ if there is a letter $u_i = a$. Conversely, $a \notin u$ if all letters $u_i$ of $u$ are different from $a$. 
Thereafter, we denote by ${\mathcal{Z}}_i(n,k)$ the set of $k$-chains of $\SetTriword(n)$ that contains exactly $i$ words $u$ such that $0 \notin u$. We denote by ${\mathfrak{z}}_i(n,k)$ the cardinality of ${\mathcal{Z}}_i(n,k)$. Note that for $n = 1$, ${\mathfrak{z}}_i(1,k) = 1$ for all $i \in [0,k]$.
\smallbreak

First, we need to define a classification for all $k$-chains of size $n$.
\smallbreak

For any $n \geq 1$ and $k \geq 1$, let $u^{(1)}, u^{(2)}, \dots, u^{(k-1)}, u^{(k)}$ be $k$ triwords of size $n$ such that $u^{(1)} \Leq u^{(2)} \Leq \dots \Leq u^{(k)}$. It is always possible to classify $k$-chains according to the presence or absence of the letter $0$ in $u^{(j)}$ with $j \in [k]$ by setting, for all $i \in [0,k]$,
\begin{equation}\label{equ:Zclassication}
{\mathcal{Z}}_i(n,k) := \{ [u^{(1)}, u^{(2)}, \dots, u^{(k)}] ~:~ 0 \in u^{(r)}, 0 \notin u^{(s)} \mbox{ for all } r \in [k-i], s \in [k-i+1,k] \}.
\end{equation}
This classification is called the \Def{${\mathcal{Z}}$-classification} for $k$-chains. Note that the union of all these sets is disjoint and give a description of all $k$-chains.
\smallbreak

For any $n \geq 2$, $k \geq 1$, $i \in [0,k]$, and $j \geq i$, let
\begin{equation}
\phi_i^{(n,k)} : {\mathcal{Z}}_i(n,k) \rightarrow \N \times {\mathcal{Z}}_j(n-1,k)
\end{equation}
such that, for $\gamma$ a $k$-chain in ${\mathcal{Z}}_i(n,k)$, 
\begin{equation}\label{equ:def-phi}
\phi_i^{(n,k)}(\gamma) := (\NumberTwo, \gamma') 
\end{equation}
where $\gamma'$ is the $k$-chain obtained by forgetting the last letter of each word of $\gamma$, and $\NumberTwo$ is the number of words ending by $2$ in $\gamma$.
\smallbreak

Let $\gamma \in {\mathcal{Z}}_i(n,k)$. Clearly, $\phi_i^{(n,k)}(\gamma)$ is a $k$-chain $\gamma'$ which belongs to ${\mathcal{Z}}_j(n-1,k)$ with $j \in [i, k]$, since the $k$-chain $\gamma'$ has at the most the same number of triwords with a letter $0$ than the $k$-chain $\gamma$.
\smallbreak

Therefore, by setting $\gamma := [v^{(1)}a^{(1)}, v^{(2)}a^{(2)}, \dots, v^{(k)}a^{(k)}]$ with $v^{(r)} \in~\SetTriword(n-1)$ and $a^{(r)} \in~[0,2]$ for all $r \in [k]$, and $(\NumberTwo, \gamma') := \phi_i^{(n,k)} (\gamma)$, there are two cases.
\smallbreak

\begin{itemize}
\item Suppose that $\gamma'$ belongs to ${\mathcal{Z}}_i(n-1,k)$. Then one has $k + 1$ possibilities to place or not the letter $2$.
Indeed, for $r \in [k-i]$, $a^{(r)} = 0$ or $a^{(r)} = 2$ because by hypothesis $0 \in v^{(r)}$. For $s \in [k - i + 1, k]$, because $\gamma'$ is already in ${\mathcal{Z}}_i(n-1,k)$, one has $a^{(s)} = 1$ or $a^{(s)} = 2$. To summarize, one has $k + 1$ possibilities to place the letter $2$, knowing that all letters before the first ending letter $2$ have to be smaller than $2$, and all letters after have to be $2$.

\item Suppose now that $\gamma'$ belongs to ${\mathcal{Z}}_j(n-1,k)$ with $j \in [i+1, k]$. Then one has $i+1$ possibilities to place or not the letter $2$. Indeed, in this case we must set $a^{(s)} = 0$ for all $s \in [k-j, k-i]$ in order to obtain a $k$-chain in ${\mathcal{Z}}_i(n,k)$. This implies that all ending letters before $a^{(k-j)}$ have to be also $0$. It follows that for all $r \in [k -i +1, k]$, $a^{(r)} = 1$ or $a^{(r)} = 2$. 
\end{itemize}
In the two cases, the position of the first letter $2$ depends on the integer $\NumberTwo$.
\smallbreak

Thus, for $\gamma$ a $k$-chain in ${\mathcal{Z}}_i(n,k)$, it follows that
\begin{equation}
\phi_i^{(n,k)}(\gamma) \in [k+1] \times {\mathcal{Z}}_i(n-1,k) ~~~~\bigsqcup~~~~ [i+1] \times \bigsqcup_{j \in [i+1, k]} {\mathcal{Z}}_j(n-1,k).
\end{equation}

For instance, by setting 
\begin{equation}\label{equ:gammachain}
\gamma := [00200, 02200, 02202, 12222]
\end{equation}
a $4$-chain of ${\mathcal{Z}}_1(5,4)$, one has $\phi_1^{(5,4)}(\gamma) = (\NumberTwo, \gamma')$ with 
\begin{equation}
\gamma' = [0020, 0220, 0220, 1222],
\end{equation}
and $\NumberTwo = 2$.
\smallbreak

\begin{Lemma}\label{lem:bijectionforkchain}
For any $n \geq 2$, $k \geq 1$, and $i \in [0,k]$, the map $\phi_i^{(n,k)}$ is a bijection.
\end{Lemma}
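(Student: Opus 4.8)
The plan is to exhibit an explicit inverse to $\phi_i^{(n,k)}$, reconstructing the forgotten last letters from the data $(\NumberTwo, \gamma')$. Write $\gamma = [v^{(1)}a^{(1)}, \dots, v^{(k)}a^{(k)}] \in {\mathcal{Z}}_i(n,k)$ with $v^{(r)} \in \SetTriword(n-1)$ and $a^{(r)} \in \{0,1,2\}$. The starting observation is that, since $\gamma$ is a chain for the componentwise order $\Leq$, the sequence of last letters is weakly increasing, $a^{(1)} \leq a^{(2)} \leq \dots \leq a^{(k)}$; being valued in $\{0,1,2\}$, such a sequence is completely determined by the two thresholds at which it passes from $0$ to $1$ and from $1$ to $2$.

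First I would record which of these thresholds is already pinned down by $\gamma'$ alone. The truncation $\gamma'$ tells us, for every $r$, whether $0 \in v^{(r)}$, and the membership $\gamma \in {\mathcal{Z}}_i(n,k)$ forces the full word $v^{(r)}a^{(r)}$ to contain a $0$ exactly for $r \leq k-i$. Combined with the triword constraint (no factor $01$), this confines each $a^{(r)}$ to a two-element set, precisely as in the two cases isolated before the statement: when $\gamma' \in {\mathcal{Z}}_i(n-1,k)$ the values split as $\{0,2\}$ on the first block and $\{1,2\}$ on the second, whereas when $\gamma' \in {\mathcal{Z}}_j(n-1,k)$ with $j>i$ the initial block is forced to be all $0$ and only the second block $\{1,2\}$ stays free. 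In either case the threshold where the sequence leaves $0$ is fixed by the block structure read off from $\gamma'$, so the only remaining freedom is the position of the first $2$. Hence the weakly increasing sequence $(a^{(r)})$ is uniquely recovered from the single integer $\NumberTwo$ counting its letters $2$.

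This immediately yields injectivity: two chains with the same image share the same $\gamma'$ and the same $\NumberTwo$, hence the same last letters, hence coincide. For surjectivity I would run the reconstruction in reverse: given any $(\NumberTwo, \gamma')$ in the codomain, define $a^{(r)}$ by the rule dictated by the relevant case (append $2$ to the last $\NumberTwo$ words, and to each remaining word append $0$ or $1$ according to the block it lies in), and then verify that the resulting $\gamma$ is a legitimate element of ${\mathcal{Z}}_i(n,k)$ mapping back to $(\NumberTwo, \gamma')$. The checks are that no appended letter creates a factor $01$, so that every $v^{(r)}a^{(r)}$ is a triword, that monotonicity of $(a^{(r)})$ is preserved, and that exactly $i$ of the full words avoid $0$.

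The main obstacle, and the only place demanding genuine care, is this last verification across the two cases—in particular ensuring that when $j>i$ the forced $0$'s on the initial block neither clash with monotonicity nor fail to cut the count of $0$-free words down from $j$ to exactly $i$. Since the enumeration of admissible first-$2$ positions ($k+1$ choices when $\gamma' \in {\mathcal{Z}}_i(n-1,k)$ and $i+1$ choices when $\gamma' \in {\mathcal{Z}}_j(n-1,k)$ with $j>i$) was already carried out in the discussion preceding the statement, this bookkeeping is exactly what matches the two factors of the disjoint union forming the codomain, and the bijectivity of $\phi_i^{(n,k)}$ follows.
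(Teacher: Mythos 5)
Your proof is correct and follows essentially the same route as the paper's: both arguments reconstruct the forgotten last letters from $(\NumberTwo,\gamma')$ via the same two-case analysis (whether $\gamma'$ lies in ${\mathcal{Z}}_i(n-1,k)$ or in some ${\mathcal{Z}}_j(n-1,k)$ with $j>i$), establishing surjectivity by the explicit reverse construction and injectivity from the fact that the weakly increasing sequence of appended letters is uniquely determined by the block structure together with $\NumberTwo$. One cosmetic remark: the constraint you invoke is the absence of the \emph{subword} $01$ (a $0$ anywhere followed by a $1$ anywhere later), not merely the factor $01$; your block analysis uses the correct, stronger condition, so nothing is affected.
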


\begin{proof}
Let $\delta' := [v^{(1)}, v^{(2)}, \dots, v^{(k)}]$ be a $k$-chain of $\SetTriword(n-1)$, and $\NumberTwo \in [0, k]$.
\begin{itemize}
\item Suppose that $\delta' \in {\mathcal{Z}}_i(n-1,k)$.
Let $\delta := [v^{(1)}a^{(1)}, v^{(2)}a^{(2)}, \dots, v^{(k)}a^{(k)}]$ such that for all $r \in [k - \NumberTwo]$ we set $a^{(r)} = 0$ if $0 \in v^{(r)}$, and $a^{(r)} = 1$ otherwise, and $a^{(s)} = 2$ for all $s \in [k-\NumberTwo +1, k]$. The resulting $k$-chain is a $k$-chain of $\SetTriword(n)$ because $a^{(1)} \leq a^{(2)} \leq \dots \leq a^{(k)}$ by construction.
Furthermore, since no $0$ is added at the end of a word that does not contain a letter $0$ in $\delta'$, the $k$-chain $\delta$ belongs to ${\mathcal{Z}}_i(n,k)$.

\item Suppose that $\delta' \in {\mathcal{Z}}_j(n-1,k)$, with $j \in [i+1, k]$. Let $\delta := [v^{(1)}a^{(1)}, v^{(2)}a^{(2)}, \dots, v^{(k)}a^{(k)}]$ such that $a^{(r)} = 0$ for all $r \in [k-i]$, $a^{(s)} = 1$ for all $s \in [k-i+1, k-\NumberTwo]$, and $a^{(q)} = 2$ for all $q \in [k-\NumberTwo +1, k]$. By construction, one has $a^{(1)} \leq a^{(2)} \leq \dots \leq a^{(k)}$. This implies that this $k$-chain is a $k$-chain of $\SetTriword(n)$. 
Moreover, since the letter $0$ is added at the end of $v^{(r)}$ for $r \in [k-i]$, the $k$-chain $\delta$ belongs to ${\mathcal{Z}}_i(n,k)$.
\end{itemize}
In both cases, since $\delta$ belongs to ${\mathcal{Z}}_i(n,k)$, this implies that the map $\phi_i^{(n,k)}$ is surjective.
\smallbreak

Let $(\NumberTwo_1, \gamma')$ and $(\NumberTwo_2, \delta')$ be two pairs with $\NumberTwo_1, \NumberTwo_2 \in [0, k]$, and $\gamma' \in {\mathcal{Z}}_{j_1}(n-1,k)$ and $\delta' \in {\mathcal{Z}}_{j_2}(n-1,k)$ with $j_1, j_2 \in [i, k]$. Let $\gamma$ be the image of $(\NumberTwo_1, \gamma')$ and $\delta$ be the image of $(\NumberTwo_2, \delta')$ by ${\phi_i^{(n,k)}}^{-1}$. Suppose that $(\NumberTwo_1, \gamma') \ne (\NumberTwo_2, \delta')$. This implies that either $\NumberTwo_1 \ne \NumberTwo_2$ or $\gamma' \ne \delta'$. In the first case, if $\NumberTwo_1 > \NumberTwo_2$ then there are more words ending by $2$ in $\gamma$ than in $\delta$. Thus one has $\gamma \ne \delta$. In the second case, there is at least one word in $\gamma$ such that the prefix of this word is different from the word with the same index in $\delta$. Here again, one has $\gamma \ne \delta$. Hence, the map $\phi_i^{(n,k)}$ is injective.
\end{proof}

For instance, for the $4$-chain~\eqref{equ:gammachain}, $\gamma'$ belongs to ${\mathcal{Z}}_1(4,4)$ and $t$ is $2$. We can rebuild $\gamma$ by adding the letter $2$ on the two last words of $\gamma'$, since by definition of triwords, the greater triwords of a $k$-chain must have greater or equal letters compare to smaller triwords. Besides, since the two first words of $\gamma'$ have the letter $0$, we can only add the letter $0$ at its end.
\smallbreak

Let us consider another example with 
\begin{equation}
\gamma := [00000, 00200, 12210, 12211, 12212]
\end{equation}
a $5$-chain of ${\mathcal{Z}}_2(5,5)$. One has $\phi_2^{(5,5)}(\gamma) = (\NumberTwo, \gamma')$ with $\NumberTwo = 1$ and
\begin{equation}
\gamma' = [0000, 0020, 1221, 1221, 1221].
\end{equation}
Here $\gamma'$ belongs to ${\mathcal{Z}}_3(4,5)$. Since $\gamma \in {\mathcal{Z}}_2(5,5)$, to rebuild $\gamma$ from $\gamma'$, we have to add $0$ at the end of the third word of $\gamma'$. Moreover, since $\NumberTwo = 1$, the letter $2$ is added to the last word and the letter $1$ is added to the penultimate word of $\gamma'$.
\smallbreak

For any ${\mathcal{Z}}_i(n,k)$ of this classification, one obtains by denoting by ${\mathfrak{z}}_i(n,k)$ the cardinality of ${\mathcal{Z}}_i(n,k)$ with $i \in [0,k]$, the following result.
\smallbreak

\begin{Proposition}
Let $n \geq 2$ and $k \geq 1$. For all $i \in [0,k]$, each ${\mathfrak{z}}_i(n,k)$ satisfies
\begin{equation}\label{equ:z_i}
{\mathfrak{z}}_i(n,k) = (k+1) {\mathfrak{z}}_i(n-1,k) + (i+1) \sum_{j = i + 1}^{k} {\mathfrak{z}}_{j}(n-1,k).
\end{equation}
\end{Proposition}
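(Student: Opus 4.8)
The plan is to read the recurrence off directly from the bijection $\phi_i^{(n,k)}$ of Lemma~\ref{lem:bijectionforkchain}, using only that a bijection preserves cardinalities together with the additivity of cardinality over disjoint unions and its multiplicativity over Cartesian products. No induction on $n$ is needed: for fixed $n \geq 2$, $k \geq 1$, and $i \in [0,k]$, the identity~\eqref{equ:z_i} is simply an equality of cardinalities, each side counting a finite set.

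First I would invoke Lemma~\ref{lem:bijectionforkchain}, which exhibits $\phi_i^{(n,k)}$ as a bijection from ${\mathcal{Z}}_i(n,k)$ onto the target set
\begin{equation}
[k+1] \times {\mathcal{Z}}_i(n-1,k) ~\bigsqcup~ [i+1] \times \bigsqcup_{j = i+1}^{k} {\mathcal{Z}}_j(n-1,k).
\end{equation}
Hence ${\mathfrak{z}}_i(n,k) = \# {\mathcal{Z}}_i(n,k)$ equals the cardinality of this target set. I would then compute the two pieces separately. The first piece $[k+1] \times {\mathcal{Z}}_i(n-1,k)$ has cardinality $(k+1)\,{\mathfrak{z}}_i(n-1,k)$, since $\#[k+1] = k+1$ and $\#{\mathcal{Z}}_i(n-1,k) = {\mathfrak{z}}_i(n-1,k)$ by definition. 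For the second piece, the inner union $\bigsqcup_{j=i+1}^{k} {\mathcal{Z}}_j(n-1,k)$ has cardinality $\sum_{j=i+1}^{k} {\mathfrak{z}}_j(n-1,k)$, because the sets ${\mathcal{Z}}_j(n-1,k)$ are the pairwise disjoint blocks of the ${\mathcal{Z}}$-classification; multiplying by $\#[i+1] = i+1$ gives $(i+1)\sum_{j=i+1}^{k}{\mathfrak{z}}_j(n-1,k)$. Adding the two contributions produces exactly the right-hand side of~\eqref{equ:z_i}.

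The only point requiring any care, and hence the mild obstacle, is to confirm that the two summands of the target are genuinely disjoint, so that the counts add without overlap. This is guaranteed by the structure already set up before Lemma~\ref{lem:bijectionforkchain}: the first summand collects the images for which the shortened chain $\gamma'$ lies in ${\mathcal{Z}}_i(n-1,k)$, while the second collects those for which $\gamma' \in {\mathcal{Z}}_j(n-1,k)$ with $j \geq i+1$, and these regimes are disjoint precisely because the blocks ${\mathcal{Z}}_i$ and ${\mathcal{Z}}_j$ of the classification are disjoint for $j \neq i$. The differing first factors $[k+1]$ and $[i+1]$ reflect that in these two regimes the admissible positions of the first trailing $2$ are counted differently, but for the cardinality computation only the sizes $k+1$ and $i+1$ of these index sets matter.
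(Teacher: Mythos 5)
Your proof is correct and takes essentially the same route as the paper, whose entire proof of this proposition is the single sentence that it is a direct consequence of Lemma~\ref{lem:bijectionforkchain}. You have simply made explicit the cardinality count (bijection plus disjoint union plus Cartesian product) that the paper leaves implicit.
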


\begin{proof}
This is a direct consequence of Lemma~\ref{lem:bijectionforkchain}.
\end{proof}

For example, for 
\begin{equation}
\begin{split}
{\mathcal{Z}}_1(2,3) =& \{[00,00,11], [00,00,12], [00,02,12], [02,02,12], \\
&[00,10,11], [00,10,12], [10,10,11], [10,10,12] \},
\end{split}
\end{equation}
the first four $3$-chains came from ${\mathcal{Z}}_1(1,3) = \{ [0,0,1] \}$, the next two came from ${\mathcal{Z}}_2(1,3) = \{ [0,1,1] \}$, and the last two came from ${\mathcal{Z}}_3(1,3) = \{ [1,1,1] \}$.
\smallbreak

The system
\begin{equation}
\begin{split}
{\mathfrak{z}}_0(n,k) &= (k+1) {\mathfrak{z}}_0(n-1,k) + {\mathfrak{z}}_1(n-1,k) + \dots + {\mathfrak{z}}_{k-1}(n-1,k) + {\mathfrak{z}}_{k}(n-1,k), \\
{\mathfrak{z}}_1(n,k) &= (k+1) {\mathfrak{z}}_1(n-1,k) + 2 {\mathfrak{z}}_2(n-1,k) + \dots + 2 {\mathfrak{z}}_{k-1}(n-1,k) + 2 {\mathfrak{z}}_{k}(n-1,k), \\
\vdots \\
{\mathfrak{z}}_{k-1}(n,k) &= (k+1) {\mathfrak{z}}_{k-1}(n-1,k) + k {\mathfrak{z}}_{k}(n-1,k), \\
{\mathfrak{z}}_k(n,k) &= (k+1) {\mathfrak{z}}_{k}(n-1,k),
\end{split}
\end{equation}
is called \Def{$\mathfrak{z}$-system}.

\begin{Proposition}\label{prop:k-chains}
For any $n \geq 2$ and $k \geq 1$, the $k$-chains of the Hochschild poset $\SetTriword(n)$ are enumerated by
\begin{equation}
\sum_{i = 0}^{k} {\mathfrak{z}}_i(n,k) = (k+1)^{n - (k + 1)} P_k(n),
\end{equation}
where $P_k(n)$ is a monic polynomial of degree $k$ determined by the $\mathfrak{z}$-system.
\end{Proposition}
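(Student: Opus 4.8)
The plan is to read the $\mathfrak{z}$-system as a single linear recurrence in $n$ for the vector $\mathbf{z}(n) := ({\mathfrak{z}}_0(n,k), \dots, {\mathfrak{z}}_k(n,k))$ and to exploit the fact that its transition matrix has only one eigenvalue. Writing~\eqref{equ:z_i} in matrix form gives $\mathbf{z}(n) = M \mathbf{z}(n-1)$, where $M$ is the $(k+1)\times(k+1)$ matrix (indices running over $[0,k]$) with $M(i,i) = k+1$, with $M(i,j) = i+1$ for $j > i$, and $M(i,j) = 0$ for $j < i$. This matrix is upper triangular with constant diagonal $k+1$, so $k+1$ is its only eigenvalue, of multiplicity $k+1$. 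The base case ${\mathfrak{z}}_i(1,k) = 1$ gives $\mathbf{z}(1) = (1, \dots, 1)$, hence $\mathbf{z}(n) = M^{n-1}\mathbf{z}(1)$ for all $n \geq 1$.

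First I would split $M = (k+1)I + N$, where $N$ is the strictly upper triangular matrix with $N(i,j) = i+1$ for $j > i$ and $0$ otherwise; being strictly upper triangular of size $k+1$, it satisfies $N^{k+1} = 0$. Since the two summands commute, the binomial theorem yields
\begin{equation}
\mathbf{z}(n) = \sum_{m=0}^{k} \binom{n-1}{m}(k+1)^{n-1-m} N^m \mathbf{z}(1).
\end{equation}
Summing all coordinates (applying the row vector $(1,\dots,1)$ on the left) produces
\begin{equation}
\sum_{i=0}^{k} {\mathfrak{z}}_i(n,k) = (k+1)^{n-(k+1)} \sum_{m=0}^{k} \binom{n-1}{m}(k+1)^{k-m} c_m,
\end{equation}
where $c_m$ denotes the sum of all entries of $N^m$ (using $\mathbf{z}(1) = (1,\dots,1)$). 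As each $\binom{n-1}{m}$ is a polynomial in $n$ of degree $m$ and the remaining factors are constants, the second sum is a polynomial $P_k(n)$ of degree at most $k$, which already produces the asserted shape $(k+1)^{n-(k+1)} P_k(n)$. The identity is valid for every $n \geq 1$, since $\binom{n-1}{m}$ vanishes whenever $n-1 < m$, so no terms are lost.

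It then remains to check that $P_k$ is monic of degree exactly $k$, which amounts to showing $c_k = k!$. The key point is that $N^k$, a $k$-th power of a $(k+1)\times(k+1)$ strictly upper triangular matrix, has a single possibly nonzero entry, at position $(0,k)$, and this entry is the weight of the unique strictly increasing index path $0 < 1 < \dots < k$, namely $\prod_{i=0}^{k-1} N(i,i+1) = \prod_{i=0}^{k-1}(i+1) = k!$. Hence $c_k = k!$, and since the leading coefficient of $\binom{n-1}{k}$ as a polynomial in $n$ is $1/k!$, the coefficient of $n^k$ in $P_k$ equals $c_k/k! = 1$. The main conceptual step is recognizing the single-eigenvalue structure of the transition matrix; once that is in place the expansion is mechanical, and I expect the only genuine crux to be the computation $c_k = k!$ securing monicity, with the sole bookkeeping care being to read off the coefficients of~\eqref{equ:z_i} so that $M$ comes out exactly upper triangular with constant diagonal.
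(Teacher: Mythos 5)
Your proof is correct and follows essentially the same route as the paper: write the $\mathfrak{z}$-system as $\mathbf{z}(n) = M^{n-1}\mathbf{z}(1)$, split $M = (k+1)I + N$ with $N$ strictly upper triangular and nilpotent, expand by the binomial theorem, and extract monicity from the single nonzero entry $k!$ of $N^k$. The only cosmetic difference is that you compute that corner entry directly as the weight of the unique increasing index path, whereas the paper derives it from an inductive formula for all entries $N^l(i,i+l)$ (its Lemma~\ref{lem:diagMatrix}).
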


\begin{proof}
Since for $n = 1$, all ${\mathfrak{z}}_i(1,k) = 1$ with $i \in [0, k]$, one can rewrite the $\mathfrak{z}$-system with matrices
\begin{equation}\label{equ:matrixM}
\begin{pmatrix}
{\mathfrak{z}}_0(n,k) \\
{\mathfrak{z}}_1(n,k) \\
\vdots \\
{\mathfrak{z}}_{k-1}(n,k) \\
{\mathfrak{z}}_k(n,k)
\end{pmatrix} =
\begin{pmatrix}
k+1 & 1 & 1 & \hdots & 1 \\
0 & k+1 & 2 & \hdots & 2 \\
\vdots & & \ddots & & \vdots \\
0 & \hdots & 0 &  k+1 & k \\
0 & \hdots & 0 & 0 & k+1
\end{pmatrix}^{n-1}
\begin{pmatrix}
1 \\
1 \\
\vdots \\
1 \\
1 
\end{pmatrix}.
\end{equation}

Let us denote by $M$ this upper triangular matrix, $I$ the identity matrix of dimension $k+1$, and $N := M -(k+1)I$. Since $I$ and $N$ commute, one has 
\begin{equation}
\begin{split}
M^{n-1} &= \Par{(k+1)I + N}^{n-1} \\
&= \sum_{i = 0}^{k} \binom{n-1}{i} (k+1)^{n-1-i} N^i \\
&= (k+1)^{n-(k+1)} \Par{(k+1)^{k} I + (n-1) (k+1)^{k-1} N + \dots + \frac{(n-1)!}{(n-k-1)! k!} N^k}\\
&= (k+1)^{n-(k+1)} Q_k(n),
\end{split}
\end{equation}
where $Q_k(n)$ is clearly polynomial in $n$.
It only remains to deduce the polynomial $P_k(n)$ from the matrix $Q_k(n)$, as the sum of all entries of $Q_k(n)$.
\smallbreak

Furthermore, $P_k(n)$ is a polynomial of degree $k$ since $n^k$ appears in $\dfrac{(n-1)!}{(n-k-1)! k!}$. 
\smallbreak

Moreover, a particular case from Lemma~\ref{lem:diagMatrix} gives that $N^k (1, k+1) = k!$. Since $N$ is a strictly upper triangular matrix, $N^k (1, k+1)$ is the only nonzero entry of $N^k$. This implies that $P_k(n)$ is a monic polynomial.
\end{proof}

\begin{Lemma}\label{lem:diagMatrix}
For any $n \geq 2$ and $k \geq 1$, let $M$ be the upper triangular matrix in~\eqref{equ:matrixM}, $I$ be the identity matrix of dimension $k+1$, and $N := M - (k+1)I$. For any $l \in [k]$ and $i \in [k+1]$ such that $i+l \leq k+1$, one has
\begin{equation}\label{equ:entryi,i+l}
N^l (i, i+l) = \frac{(i+l-1)!}{(i-1)!}.
\end{equation}
\end{Lemma}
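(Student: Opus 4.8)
The plan is to first read off the entries of $N$ explicitly from the matrix $M$ in~\eqref{equ:matrixM}, and then to establish the claimed superdiagonal entries \eqref{equ:entryi,i+l} by a short induction on $l$.

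First I would identify $N$ precisely. The diagonal of $M$ is constant equal to $k+1$, while in each row $i$ every entry strictly to the right of the diagonal equals $i$; this is exactly how the $\mathfrak{z}$-system is arranged, since row $i$ reads $k+1$ on the diagonal and $i$ thereafter. Consequently $N = M - (k+1)I$ is the strictly upper triangular matrix with
\begin{equation*}
N(i,j) = \begin{cases} i & \text{if } j > i, \\ 0 & \text{otherwise}. \end{cases}
\end{equation*}
In particular $N^{l}$ is again strictly upper triangular, and more precisely $N^{l}(a,b) = 0$ whenever $b - a < l$, since each application of $N$ raises the column index by at least one.

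Next I would prove \eqref{equ:entryi,i+l} by induction on $l$. For the base case $l = 1$ one simply has $N(i, i+1) = i = i!/(i-1)!$, which is the asserted value. For the inductive step, writing $N^{l}(i, i+l) = \sum_{m} N(i,m)\, N^{l-1}(m, i+l)$, the factor $N(i,m)$ forces $m > i$, while the vanishing property of $N^{l-1}$ forces $(i+l) - m \geq l-1$, that is $m \leq i+1$. Hence only $m = i+1$ survives, and
\begin{equation*}
N^{l}(i, i+l) = N(i, i+1)\, N^{l-1}(i+1, i+l) = i \cdot \frac{(i+l-1)!}{i!} = \frac{(i+l-1)!}{(i-1)!},
\end{equation*}
where the middle equality uses the induction hypothesis applied to the $(l-1)$-th superdiagonal entry $N^{l-1}\bigl(i+1, (i+1)+(l-1)\bigr)$.

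The computation is entirely routine once the entries of $N$ are identified, so I expect no genuine difficulty. The only point requiring a little care, and thus the main (minor) obstacle, is the collapse of the sum over $m$ to the single term $m = i+1$, which rests on combining the support of $N$ with the fact that $N^{l-1}$ vanishes off its own $(l-1)$-th and higher superdiagonals. Alternatively, one could bypass the induction by noting directly that $N^{l}(i,i+l)$ is the sum over strictly increasing index chains $i = i_0 < i_1 < \cdots < i_l = i+l$ of the product $i_0 i_1 \cdots i_{l-1}$; since such a chain must increase by exactly one at each step, it is unique, namely $i_m = i+m$, yielding the single product $i(i+1)\cdots(i+l-1) = (i+l-1)!/(i-1)!$.
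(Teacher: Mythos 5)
Your proof is correct and follows essentially the same route as the paper: an induction on $l$ in which strict upper triangularity collapses the matrix product to a single surviving term. The only cosmetic difference is that you peel $N$ off the left ($N^l = N\,N^{l-1}$, surviving index $m = i+1$) whereas the paper peels it off the right ($N^l = N^{l-1}N$, surviving index $m = i+l-1$); the computation is otherwise identical.
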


\begin{proof}
We proceed by induction on $l$. Since $N(i,i+1) = i$ for all $i \in [k+1]$, one has that~\eqref{equ:entryi,i+l} follows for $l = 1$. Suppose that~\eqref{equ:entryi,i+l} is true for $l-1$ and let us consider $N^{l}$.
For any $i \in [k+1]$, one obtains $N^{l}(i,i+l)$ with the $i$-th line of $N^{l-1}$ and the $(i+l)$-th column of $N$. Since $N$ is a strictly upper triangular matrix, all left entries before $N^{l-1} (i, i+l-1)$ are zeros, and all below entries after $N(i+l-1,i+l)$ are also zeros. Therefore,
\begin{equation}
N^{l}(i,i+l) = N^{l-1} (i, i+l-1)~ N(i+l-1,i+l) = \frac{(i+l-2)!}{(i-1)!} (i+l-1) = \frac{(i+l-1)!}{(i-1)!},
\end{equation}
and then~\eqref{equ:entryi,i+l} holds for all $l \in [k]$.
\end{proof}

Note that since for $n = 1$, all ${\mathfrak{z}}_i(1,k) = 1$ with $i \in [0, k]$, the number of $k$-chains is $k+1$ for all $k \geq 1$. Using Proposition~\ref{prop:k-chains}, one can therefore deduce that $P_k(1) = (k+1)^{k+1}$.
\smallbreak

Recall that the triwords of size $n$ are enumerated by 
\begin{equation}
2^{n-2} (n + 3).
\end{equation}
A demonstration of this result is given in Section~\ref{subsec:Hoch}, involving generating series.  
By Proposition~\ref{prop:k-chains}, one has
\begin{equation}\label{equ:matrix-n=1}
\begin{pmatrix}
{\mathfrak{z}}_0(n,1) \\
{\mathfrak{z}}_1(n,1)
\end{pmatrix} =
\begin{pmatrix}
2 & 1 \\
0 & 2
\end{pmatrix}^{n-1}
\begin{pmatrix}
1 \\
1
\end{pmatrix} =
\begin{pmatrix}
2^{n-1} & (n-1)2^{n-2} \\
0 & 2^{n-1}
\end{pmatrix}
\begin{pmatrix}
1 \\
1
\end{pmatrix},
\end{equation}
which leads to the formula already known, for $n \geq 1$,
\begin{equation}
{\mathfrak{z}}_0(n,1) + {\mathfrak{z}}_1(n,1) = 2^{n-2} (n + 3).
\end{equation}
\smallbreak

Likewise, to enumerate the intervals of the Hochschild lattice, or in other words their $2$-chains, one has
\begin{equation}\label{matrix-n=2}
\begin{split}
\begin{pmatrix}
{\mathfrak{z}}_0(n,2) \\
{\mathfrak{z}}_1(n,2) \\
{\mathfrak{z}}_2(n,2)
\end{pmatrix} =&
\begin{pmatrix}
3 & 1 & 1  \\
0 & 3 & 2 \\
0 & 0 & 3 \\
\end{pmatrix}^{n-1}
\begin{pmatrix}
1 \\
1 \\
1 
\end{pmatrix} \\ 
=& \begin{pmatrix}
3^{n-1} & 3^{n-2}(n - 1) & 3^{n-2}(n -3) + 3^{n-3}(n^2-3n +8) \\
0 & 3^{n-1} & 3^{n-2}(2n -2)\\
0 & 0 & 3^{n-1} \\
\end{pmatrix}
\begin{pmatrix}
1 \\
1 \\
1 
\end{pmatrix}.
\end{split}
\end{equation}
The number of intervals of $\SetTriword(n)$ is therefore given by
\begin{equation}
{\mathfrak{z}}_0(n,2) + {\mathfrak{z}}_1(n,2) + {\mathfrak{z}}_2(n,2) = 3^{n-3} \Par{n^2 + 9n + 17}.
\end{equation}
\smallbreak

In the same way, the number of $3$-chains is
\begin{equation}
4^{n-4} \Par{n^3 + 20 n^2 + 93n + 142},
\end{equation}
the number of $4$-chains is
\begin{equation}
5^{n-5} \Par{n^4 + \frac{110}{3}n^3 + 355 n^2 + \frac{3490}{3} n + 1569},
\end{equation}
and the number of $5$-chains is
\begin{equation}
6^{n-6} \Par{n^5 + \frac{119}{2}n^4 + 1026 n^3 + \frac{13261}{2} n^2 + 17363 n + 21576}.
\end{equation}

It seems that the sequence of constant terms of the polynomials $P_k(n)$
\begin{equation}\label{equ:numbersconnectedfunctions}
3, 17, 142, 1569, 21576, \dots
\end{equation}
is the sequence of numbers of \Def{connected functions} on $n$ labeled nodes~\OEIS{A001865} of \cite{Slo}. Recall that a connected function is a function $f : [n] \rightarrow [n]$ such that the graph $G := (V, E)$ is connected, where $V := [n]$ is the set of vertices and $E := \{(i, f(i)) \}$ with $i \in [n]$ is the set of edges.

\subsection{Subposets of the Hochschild posets}

An interesting subposet of the poset $\SetTriword(n)$ appears by considering the set of triwords restricted to words beginning by the letter $1$. Here, some results are given for this subposet.
\smallbreak

Let $u \in \SetTriword(n)$ such that $u_1 = 1$, then $u$ is called a $\mu$-triword, and the graded set of $\mu$-triwords is denoted by $\SetTriword_\mu$. 
\smallbreak

From Lemma~\ref{grammar}, one has 
\begin{equation}
\SetTriword_\mu = \epsilon + 1B,
\end{equation}
where $B$ is the set of all words on $\{0,1,2\}$ avoiding the subword $01$.
\smallbreak

It follows that the generating series of $\SetTriword_\mu$ is 
\begin{equation}
G_{\SetTriword_\mu}(z) = 1 + zG_B(z).
\end{equation}
By reminding the two generating series~\eqref{ga} and~\eqref{gb}, one can deduce, for any $n \geq 1$,
\begin{equation}\label{numbermutriwords}
\# \SetTriword_\mu(n) = 2^{n-2}(n+1).
\end{equation}

The subposet $(\SetTriword_\mu(n), \Leq)$ is called \Def{mini-Hochschild poset}. As for Hochschild posets, we can give the ${\mathcal{Z}}$-classification for $k$-chains of mini-Hochschild posets.
This classification is identical to the classification~\eqref{equ:Zclassication}. For any $n \geq 2$, $k \geq 1$, and $i \in [0,k]$, let us show that the map $\phi_i^{(n,k)}$ defined by~\eqref{equ:def-phi} is also a bijection for the set of $\mu$-triwords. 
\smallbreak

First, the reverse image of the map $\phi_i^{(n,k)}$ adds one letter on the end of each triwords of the $k$-chains. It means that if all triwords of a $k$-chain $\gamma$ in $\mathcal{Z}_j (n-1,k)$ for $j \in [i,k]$ are $\mu$-triwords, then the reverse image of $\gamma$ is also a $k$-chain of $\mu$-triwords. Likewise, for a $k$-chain of $\mu$-triwords such that $\gamma \in \mathcal{Z}_i (n,k)$, $\phi_i^{(n,k)}(\gamma)$ remains a $k$-chain of $\mu$-triwords since the first letter of each $\mu$-triword remains $1$. Second, all arguments in the proof of Lemma~\ref{lem:bijectionforkchain} hold in the case of $\mu$-triwords because at no point the first letter of triwords which constitutes $k$-chains intervenes.
\smallbreak

The $\mathfrak{z}$-system for the mini-Hochschild poset holds, and one has for any $n \geq 2$, $k \geq 1$, and for all $i \in [0,k]$,
\begin{equation}
{\mathfrak{z}}_i(n,k) = (k+1) {\mathfrak{z}}_i(n-1,k) + (i+1) \sum_{j = i + 1}^{k} {\mathfrak{z}}_{j}(n-1,k).
\end{equation}
Since ${\mathfrak{z}}_k(1,k) = 1$ and ${\mathfrak{z}}_j(1,k) = 0$ for all $j \in [0, k-1]$, it follows that the $\mathfrak{z}$-system for the mini-Hochschild poset can be rewritten 
\begin{equation}
\begin{pmatrix}
{\mathfrak{z}}_0(n,k) \\
{\mathfrak{z}}_1(n,k) \\
\vdots \\
{\mathfrak{z}}_{k-1}(n,k) \\
{\mathfrak{z}}_k(n,k)
\end{pmatrix} =
\begin{pmatrix}
k+1 & 1 & 1 & \hdots & 1 \\
0 & k+1 & 2 & \hdots & 2 \\
\vdots & & \ddots & & \vdots \\
0 & \hdots & 0 &  k+1 & k \\
0 & \hdots & 0 & 0 & k+1
\end{pmatrix}^{n-1}
\begin{pmatrix}
0 \\
0 \\
\vdots \\
0 \\
1 
\end{pmatrix}.
\end{equation}

Thus, for any $n \geq 2$ and $k \geq 1$, the number of $k$-chains in the poset $\SetTriword_\mu(n)$ is given by the sum of the last column of $M^{n-1}$, where $M$ is the upper triangular matrix. One can conclude that Proposition~\ref{prop:k-chains} holds for the mini-Hochschild poset. 
\smallbreak

For instance, one deduce from~\eqref{equ:matrix-n=1} that the number of $\mu$-triwords of size $n$ is
\begin{equation}
2^{n-1} + (n-1)2^{n-2} = 2^{n-2}(n + 1),
\end{equation}
as shown through  generating series~\eqref{numbermutriwords}.
\smallbreak

In the same way, from~\eqref{matrix-n=2} one deduce that the number of intervals of $\SetTriword_\mu(n)$ is
\begin{equation}
3^{n-3}\Par{n^2 + 6n + 2},
\end{equation}
the number of $3$-chains is
\begin{equation}
4^{n-4}\Par{n^3 + 16n^2 + 41n + 6},
\end{equation}
the number of $4$-chains is
\begin{equation}
5^{n-5}\Par{n^4 + \frac{95}{3} n^3 + \frac{445}{2} n^2 + \frac{2075}{2} n + 24},
\end{equation}
and the number of $5$-chains is
\begin{equation}
6^{n-6}\Par{n^5 + \frac{107}{2} n^4 + 750 n^3 + \frac{6505}{2} n^2 + 3599 n + 120}.
\end{equation}

Similarly to the remark on the sequence of constant terms~\eqref{equ:numbersconnectedfunctions}, 
it seems that the sequence of constant terms of these polynomials
\begin{equation}
1, 2, 6, 24, 120, \dots
\end{equation}
is the sequence of factorial numbers.
\smallbreak

Several other properties verified by the Hochschild poset seem to hold for the mini-Hochschild poset. It may be interesting to proceed to a complete study of this subposet as well.

\section*{Appendix on Coxeter polynomials by Frédéric Chapoton}

This short section describes a conjectural property of the
Hochschild lattices, more precisely of their Coxeter polynomials.
\smallbreak

Let us start by a few general words on the Coxeter polynomial as an
interesting invariant of posets, and its theoretical context.
\smallbreak

Given any finite poset $\PosetP$, let $M_{\PosetP}$ be the triangular matrix with rows and columns indexed by $\PosetP$ and entries $M_{\PosetP}(x,y) = 1$ if $x \Leq_{\PosetP} y$ and $0$ otherwise. The Coxeter matrix of $\PosetP$ is the matrix $C_{\PosetP}$
defined by the formula $- M_{\PosetP} (M_{\PosetP}^{-1})^{t}$, where the second factor is the transpose of the inverse. This definition may look strange, but is very natural from a representation-theoretic point of view, where it comes from the Auslander-Reiten translation functor $\tau$ on the
derived category $\mathcal{D}_{\PosetP}$ of modules over the incidence algebra of $\PosetP$. To keep it short, let us just say that the Coxeter matrix $C_{\PosetP}$, up to change of basis over $\Z$, is an invariant of $\PosetP$ that depends only on the derived category $\mathcal{D}_{\PosetP}$. It is known that non-isomorphic posets can have equivalent derived categories, in which case they will share the same Coxeter matrix up to change of basis.
\smallbreak

The Coxeter polynomial of $\PosetP$, defined as the characteristic polynomial of the Coxeter matrix $C_{\PosetP}$, is therefore also an invariant of $\PosetP$ depending only on the derived category $\mathcal{D}_{\PosetP}$. This invariant is very easily computed on examples and sometimes turns out to have nice properties.
\smallbreak

In the case of Hochschild posets, computer experiments suggests the following conjecture.
\begin{conjecture} \label{conj_A}
The Coxeter polynomial $c_n(x)$ of the Hochschild poset $\SetTriword(n)$ is a product of cyclotomic polynomials.
\end{conjecture}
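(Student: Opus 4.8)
The plan is to restate Conjecture~\ref{conj_A} as a spectral statement: a monic integer polynomial is a product of cyclotomic polynomials exactly when all of its roots are roots of unity, so it suffices to prove that every eigenvalue of the Coxeter matrix $C_n := C_{\SetTriword(n)}$ is a root of unity. A convenient reformulation I would record first is that, writing $M_n$ for the unipotent upper-triangular zeta matrix of $\SetTriword(n)$, the identity $C_n = -M_n M_n^{-t}$ together with $\det M_n = 1$ gives
\begin{equation}
c_n(x) = \det(x I - C_n) = \det\!\big(M_n + x\, M_n^{t}\big).
\end{equation}
This already shows $c_n$ is palindromic, $c_n(x) = x^{N} c_n(1/x)$ with $N = 2^{n-2}(n+3)$, a sanity check consistent with (but far weaker than) the conjecture.

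The conceptual route I would pursue first is the fractional Calabi--Yau property of the derived category $\mathcal D_{\SetTriword(n)}$ of the incidence algebra. On $K_0$ the Coxeter transformation is $\Phi = -[S]$, where $S$ is the Serre functor and $[1]$ acts as $-1$. If one can show $S^{q} \cong [p]$ as triangle functors for some positive integers $p,q$ (the fractional Calabi--Yau condition), then on $K_0$ one gets $[S]^{q} = (-1)^{p}\,\mathrm{Id}$, hence $\Phi^{q} = (-1)^{p+q}\,\mathrm{Id}$; so $C_n^{q} = \pm I$, every eigenvalue is a root of unity, and Conjecture~\ref{conj_A} follows at once. The natural handle for establishing such a periodicity is the inductive construction of Theorem~\ref{thm:bounded_lattice_triword}.

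To turn that handle into something computable I would exploit two structural facts. First, products behave well: for a product poset $\mathcal D_{P\times Q}\simeq \mathcal D_{P}\otimes\mathcal D_Q$, and the matrix identity $M_{P\times Q}=M_P\otimes M_Q$ gives $C_{P\times Q} = -C_P\otimes C_Q$. Since $C_{\IntervalTwo}$ has characteristic polynomial $\Phi_3(x)=x^2+x+1$, the operation $P\mapsto P\times\IntervalTwo$ multiplies the Coxeter spectrum by primitive cube roots of unity up to sign, hence preserves the ``all eigenvalues are roots of unity'' property. This settles the hypercube pieces: ordering $\SetTriword(n)$ by first letter makes $M_n$ block upper triangular, with one diagonal block the zeta matrix of the triwords starting with $0$, which form the product $\IntervalTwo^{\times(n-1)}$ and therefore have cyclotomic Coxeter polynomial, and the other diagonal block the zeta matrix of the mini-Hochschild poset $\SetTriword_\mu(n)$. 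I would therefore set up a simultaneous induction on the pair $(\SetTriword(n),\SetTriword_\mu(n))$, using the step-by-step construction of Lemma~\ref{lem:doubling} to pass from size $n$ to size $n+1$.

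The main obstacle is the interval-doubling step itself. The construction of $\SetTriword(n+1)$ from $\SetTriword(n)$ factors through a product step $\SetTriword(n)\mapsto\SetTriword(n)\times\IntervalTwo$, which is harmless by the previous paragraph, followed by the doubling of the interval $I_0\cong\IntervalTwo^{\times(n-1)}$; unlike a product, an interval doubling has no clean effect on the Serre functor, and correspondingly no simple transformation rule for $\det(M_n+xM_n^{t})$. Moreover, because $C_n=-M_nM_n^{-t}$ mixes the blocks, the characteristic polynomial does \emph{not} factor as the product of the two diagonal Coxeter polynomials: the off-diagonal coupling block recording how the $0$-starting hypercube sits below the $\mu$-triwords survives in $\det(M_n+xM_n^{t})$, and controlling it is the technical crux. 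Overcoming this --- either by realizing the doubling of $I_0$ as an explicit tilt or mutation compatible with a fractional Calabi--Yau structure, or by extracting from the block decomposition a closed recursion for $(c_n,c_n^{\mu})$ and matching it against a recursion that manifestly produces cyclotomic products --- is where the real work lies, with the determination of the period $q$ being the decisive input that would complete the argument.
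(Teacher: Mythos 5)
This statement is Conjecture~\ref{conj_A}; the paper offers no proof of it (it is presented in the appendix as an open conjecture supported by computer experiments), so there is nothing to compare your argument against --- the only question is whether your proposal would actually establish the result. It would not, and the obstruction is recorded in the paper itself. Your central mechanism is the fractional Calabi--Yau property: from $S^{q}\cong[p]$ you deduce $C_n^{q}=\pm I$, hence diagonalizability and finite multiplicative order of the Coxeter matrix. But the appendix explicitly notes that the Coxeter matrices of $\SetTriword(4)$ and $\SetTriword(5)$ are \emph{not} diagonalizable over $\C$ and do \emph{not} have finite multiplicative order. Since $C_n^{q}=\pm I$ forces both properties, the fractional Calabi--Yau route is refuted already at $n=4$: the eigenvalues may well all be roots of unity (which is what the conjecture asserts), but the matrix has nontrivial Jordan blocks, so no periodicity statement on $K_0$ can hold. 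Any successful argument must target the characteristic polynomial directly rather than the order of the transformation.

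The parts of your proposal that survive are correct but, as you acknowledge, do not close the argument. The identity $c_n(x)=\det\bigl(M_n+x\,M_n^{t}\bigr)$ and the resulting palindromicity are fine; the product rule $C_{P\times Q}=-C_P\otimes C_Q$ is fine; and the identification of the triwords beginning with $0$ as a hypercube $\IntervalTwo^{\times(n-1)}$ with cyclotomic Coxeter polynomial is fine. But the block upper-triangular shape of $M_n$ does not make $\det\bigl(M_n+x\,M_n^{t}\bigr)$ factor, precisely because $M_n^{t}$ is lower triangular with respect to the same blocks, so the coupling block contributes; and the interval-doubling step of Theorem~\ref{thm:bounded_lattice_triword} has, as you say, no known transformation rule for the Coxeter polynomial. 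Your final paragraph correctly identifies this as ``where the real work lies,'' which is an admission that the proof is not given: what you have is a reduction of the conjecture to an unproved recursion (or to a derived-equivalence statement for interval doubling), with the one concrete strategy you name for producing the needed periodicity being incompatible with the computed examples. The conjecture remains open.
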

One can note that the Coxeter matrices for $\SetTriword(4)$ and $\SetTriword(5)$ are not diagonalizable over the complex numbers and do not have finite
multiplicative order.
\smallbreak

Moreover, one can propose a guess for the factorization, as
follows. Let $f_n$ be the Coxeter polynomial $c_n(x)$ of the
Hochschild poset $\SetTriword(n)$ if $n$ is odd and $(-1)^{\deg c_n} c_n(-x)$ if $n$ is
even.

\begin{conjecture}\label{conj_B}
The modified Coxeter polynomial $f_n$ can be written as
\begin{equation}
f_n(x) = \prod_{i \geq 1} (x^i - 1)^{d_n(i)},
\end{equation}
where the integers $d_n(i)$ have the description given below.
\end{conjecture}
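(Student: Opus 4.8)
The Coxeter polynomial is an invariant of the derived category $\mathcal{D}_{\SetTriword(n)}$, so the plan is to reduce the conjecture to understanding the Coxeter matrix $C_n := C_{\SetTriword(n)}$ only up to $\Z$-conjugacy, and the target shape $f_n(x) = \prod_{i \geq 1}(x^i-1)^{d_n(i)}$ is quite restrictive. Since each factor $x^i - 1 = \prod_{d \mid i}\Phi_d(x)$ contains the root $1$, the eigenvalue multiset of $C_n$ must be a union of \emph{complete} sets of $i$-th roots of unity, and the announced non-diagonalizability of $C_4$ and $C_5$ forces genuine Jordan blocks. This already indicates what sort of target to aim for: a derived equivalence from the incidence algebra of $\SetTriword(n)$ to an algebra of \emph{tame} (Euclidean or tubular) type, whose Coxeter transformation is known to exhibit exactly this spectral behaviour, rather than to a product of Dynkin quivers, which would be diagonalizable of finite order.

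First I would write $M_n := M_{\SetTriword(n)}$ in block form dictated by the grammar of Lemma~\ref{grammar}: every triword is $\epsilon$, or $0a$ with $a$ a word over $\{0,2\}$, or $1b$ with $b$ avoiding $01$, and the componentwise order $\Leq$ refines this partition. This should produce a recursion expressing $M_n$, and hence $C_n$ and $c_n$, in terms of the matrices attached to the sublanguages $A$ and $B$, together with a mixed block recording the comparabilities $0a \Leq 1b$. This recursion is the concrete engine of the argument, since the goal can be rephrased as: the modified polynomials $f_n$ obey a recursion whose only solutions are products $\prod_i (x^i-1)^{d_n(i)}$. The sign twist defining $f_n$ for even $n$ is precisely what keeps such a recursion sign-coherent, as one already sees from the identity $C_{P \times Q} = -\,C_P \otimes C_Q$ valid for products of posets.

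The structural heart is to realize the tame derived type explicitly. Here I would exploit the two order-theoretic handles provided earlier: the constructibility by interval doubling (Theorem~\ref{thm:bounded_lattice_triword}), which passes from $\SetTriword(n-1)$ to $\SetTriword(n)$ in two controlled steps, and the EL-labelling of Theorem~\ref{shellability}, which organizes the maximal chains and identifies the distributive spine (the ideal lattice of the explicit poset of Figure~\ref{fig:JST(n)}). The hope is to convert each doubling step into a tilt, that is, to build inductively a tilting object whose endomorphism algebra is the sought tame algebra, and to track the induced action on the Grothendieck group, which is the (transpose) action governing $C_n$. If this succeeds, the exponents $d_n(i)$ acquire a representation-theoretic meaning as the tubular numerical data of the target, and the final step is to check that this count coincides with the description given below.

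The main obstacle is that neither EL-shellability nor interval doubling carries any automatic implication for derived equivalence: the tilting objects must be produced and verified by hand, and their compatibility with the two doubling steps is far from formal. The difficulty is compounded by the non-diagonalizability, which means that matching eigenvalues is not enough; one must pin down the full rational canonical form of $C_n$ over $\Z$, equivalently the exact multiplicities $d_n(i)$, so that any purely numerical or eigenvalue-counting argument will stop short of the claim. A fallback that sidesteps derived categories is to prove the factorization of $f_n$ directly by induction from the block recursion for $M_n$, but this only relocates the same hard point, namely exhibiting the integral change of basis that displays the cyclic blocks $x^i - 1$ rather than merely their roots.
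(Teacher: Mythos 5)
This statement is an open conjecture in the paper: the appendix offers no proof, only computer experiments (the table of modified Coxeter polynomials up to $n=10$) and an explicit caveat that the description of the exponents $d_n(i)$ is ``a first approximation only,'' since the splitting of $I_k\bigl(D_k - \lfloor D_k\rfloor\bigr)$ into an index and an exponent is not yet determined. So there is no argument in the paper to compare yours against, and the first obstacle to any proof is that the statement itself is not fully pinned down: before proving anything you would have to resolve that ambiguity and fix a precise candidate for all the $d_n(i)$.

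Your proposal is a reasonable research program but it is not a proof, and the gap is essentially the entire argument. Every load-bearing step is deferred: the block decomposition of $M_{\SetTriword(n)}$ induced by the grammar of Lemma~\ref{grammar} is never written down, so no recursion for $c_n$ is actually obtained; the derived equivalence to a tame or tubular algebra is only hoped for, with no tilting object constructed and no mechanism for why interval doubling or the EL-labelling should produce one (as you yourself note, neither has any automatic derived-categorical content); and the exponents $d_n(i)$ are never computed or matched against the announced formulas, which is where the real combinatorial content of the conjecture lives (the binomial coefficients $\binom{n-1}{i-3}$, the quantity $1 - \frac{2^{n-1}+(-1)^n}{3}$, the divisibility behaviour of $\binom{n-1}{3k-2}/(3k-1)$). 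Your preliminary observations are sound --- a factorization into $(x^i-1)^{d_n(i)}$ does force the spectrum to be a union of complete sets of roots of unity, and the reported non-diagonalizability of $C_4$ and $C_5$ does rule out a derived equivalence to a disjoint union of Dynkin quivers --- but these only constrain the answer; they do not produce it. As it stands the proposal establishes neither Conjecture~\ref{conj_A} nor Conjecture~\ref{conj_B}, and both remain open.
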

Note that the description is to be taken as a first approximation only, as there are still ambiguities in the proposal for some exponents.
\smallbreak

Let us define integers $d_n(i)$ in two steps. Unless defined below, $d_n(i) = 0$.
\smallbreak

First one easy step:
\begin{itemize}
\item for $i = 1$, $d_n(1) = (-1)^n$,
\item for $i = 3$, $d_n(3) = 1 - \dfrac{2^{n-1}+(-1)^n}{3}$,
\item for $i$ a multiple of $3$ with $3 < i \leq n + 2$, $d_n(i) = \binom{n - 1}{i - 3}$.
\end{itemize}

Then comes the second step, which is more complicated.
For every integer $k$ with $1 \leq k \leq (n+1)/3$, let
\begin{equation}
I_k := (3 k + 2) n - 3 k + 1
\quad\text{and}\quad
D_k := \binom{n - 1}{3 k - 2}/{(3 k - 1)}.
\end{equation}

Thus,
\begin{itemize}
\item if $D_k$ is an integer, then one sets $d_n(I_k) := D_k$,
\item otherwise, one sets $d_n(I_k) := \lfloor D_k \rfloor$ and $d_n(I_k (D_k- \lfloor D_k \rfloor)) = 1$.
\end{itemize}

This finishes the proposed description for the exponents $d_n(i)$. The last
case is the ambiguous place, as the known values
were not sufficient to make a better guess for splitting
$I_k (D_k- \lfloor D_k \rfloor)$ into the product of an index and an
exponent.

\medskip

Table~\ref{tab:coxterpoly} depicts the known values for the modified Coxeter polynomials $f_n(x)$, where we abbreviate
$(x^n - 1)^k$ as $n^k$. In each case, one can check that the proposed description for the exponents does work.
\begin{table}\renewcommand{\arraystretch}{2.3}\footnotesize
  \begin{tabular}{|c|c|c|}
    \hline
    $n$ & $\# \SetTriword(n)$ & $f_n$\\
                 \hline
  $1$ & $2$ & $\dfrac{3}{1}$\\
  $2$ & $5$ & $\dfrac{1 \cdot 8}{4}$\\
  $3$ & $12$ & $\dfrac{13}{1}$\\
  $4$ & $28$ & $\dfrac{1\cdot  6 \cdot 9 \cdot 18}{3^2}$\\
  $5$ & $64$ & $\dfrac{6^4 \cdot 7 \cdot 23^2}{1 \cdot 3^4}$\\
  $6$ & $144$ & $\dfrac{1 \cdot 6^{10} \cdot 28^3 \cdot 43}{3^{10} \cdot 14}$\\
  $7$ & $320$ & $\dfrac{6^{20} \cdot 9 \cdot 33^3\cdot 51^3}{1\cdot 3^{20}}$\\
  $8$ & $704$ & $\dfrac{1\cdot 6^{35}\cdot 9^{7}\cdot 19\cdot 20\cdot 38^3\cdot 59^7}{3^{42}\cdot 10}$\\
  $9$ & $1536$ & $\dfrac{6^{56}\cdot 9^{28}\cdot 43^{4}\cdot 67^{14}\cdot 91}{1\cdot 3^{84}}$\\
  $10$ & $3328$ & $\dfrac{1\cdot 6^{84}\cdot 9^{84}\cdot 12\cdot 15\cdot 48^5\cdot 75^{25}\cdot 102^5}{3^{170}\cdot 24\cdot 51}$\\
    \hline
\end{tabular}
\smallbreak
\caption{\footnotesize Some values for the modified Coxeter polynomials $f_n(x)$.}
    \label{tab:coxterpoly}
\end{table}

\newpage
\bibliographystyle{alpha}
\bibliography{Bibliography}

\end{document}